\newtheorem{theorem}{Theorem}[section]
\newtheorem{lemma}[theorem]{Lemma}
\newtheorem{proposition}[theorem]{Proposition}
\newtheorem{corollary}[theorem]{Corollary}
\theoremstyle{definition} 
\newtheorem{definition}[theorem]{Definition}
\theoremstyle{remark} 
\newtheorem{remark}[theorem]{Remark}
\newcommand{\bQ}{{\mathbb Q}}
\newcommand{\roundup}[1]{\lceil{#1}\rceil}
\newcommand\Vol{\text{\rm Vol}}
\newcommand\OO{{\mathcal{O}}}
\newcommand{\Supp}{\operatorname{Supp}}
\newcommand{\lct}{\operatorname{lct}}
\newcommand{\glct}{\operatorname{glct}} 
\newcommand{\mult}{\operatorname{mult}}
\numberwithin{equation}{section}
\title[Bicanonical maps of threefolds of general type]{On bicanonical maps of threefolds of general type with large volumes} 
\author{Chen Jiang}
\address{Shanghai Center for Mathematical Sciences \& School of Mathematical Sciences, Fudan University, Shanghai 200438, China}
\email{chenjiang@fudan.edu.cn}
\author{Ziqi Liu}
\address{Shanghai Center for Mathematical Sciences, Fudan University, Shanghai 200438, China}
\email{22110840007@m.fudan.edu.cn}
\begin{document}

\pagestyle{myheadings}
\markboth{\hfill Z. Liu\hfill}{\hfill \hfill}%设置页眉
\begin{abstract}
We prove that for any smooth projective $3$-fold of general type with canonical volume greater than $12^6$, the image of its bicanonical map has dimension at least $2$.
We also study pluricanonical maps of $3$-folds of general type with large canonical volume and fibered by $(1,2)$-surfaces or $(2,3)$-surfaces.
\end{abstract}
\maketitle

\tableofcontents

\section{Introduction}\label{section1}

For a smooth projective variety $X$ of general type, we are interested in the behavior of 
the $m$-canonical map $\Phi_{|mK_X|}$ of $X$ induced by the $m$-canonical system $|mK_X|$, which is crucial towards the birational classification of varieties of general type. 

It is predicted that for a smooth projective variety $X$ of general type of dimension $n$ with large invariants such as the canonical volume $\Vol(X)$ or the geometric genus $p_g(X)$, the $m$-canonical map $\Phi_{|mK_X|}$ 
behaves just like that of $X_0\times C$ where $X_0$ is a smooth projective variety of general type of dimension $n-1$ and $C$ is a smooth projective curve of genus $g\gg 0$. Such prediction for the birationality of $\Phi_{|mK_X|}$ (known as M\textsuperscript{c}Kernan's question) was recently confirmed by Chen and Liu \cite{CL24} (see also \cite{CJ17, CJ22}).

We are interested in more explicit results in lower dimensions. Recall that Bombieri \cite{Bombieri} showed that $\Phi_{|5K_S|}$ is birational for a smooth projective surface $S$ of general type, so it is natural to study the birationality of $\Phi_{|5K_X|}$ for a smooth projective $3$-fold $X$ of general type with large $\Vol(X)$ or $p_g(X)$. 
Chen \cite{Chen03} showed that 
$\Phi_{|5K_X|}$ is birational if $p_g(X)\geq 4$, and 
recently Chen and Ding \cite{CD25} showed that $\Phi_{|5K_X|}$ is birational if $\Vol(X)\geq 86$, significantly improving the results in \cite{Tod07, LDi12, Chen12}.

Now we focus on the bicanonical map. Recall that for a smooth projective surface $S$ of general type, $h^0(S, 2K_S)\geq 2$ by the Riemann--Roch formula (see \cite[Page~185]{Bombieri}), which implies that $\Phi_{|2K_S|}$ is a non-trivial map. It is natural to ask whether $\Phi_{|2K_X|}$ is not a pencil (namely, $d_2(X)\coloneqq\dim \overline{\Phi_{|2K_X|}(X)}\geq 2$) for a smooth projective $3$-fold $X$ of general type with large $\Vol(X)$ or $p_g(X)$.
Recently, Chen, Jiang, and Yan \cite[Theorem~1.2]{CJY24} proved that $d_2(X)\geq 2$ if $p_g(X)>201$, answering an open question of Chen and Zhang \cite{CZ08}. However, for a smooth projective $3$-fold $X$ of general type with large $\Vol(X)$, the behavior of its bicanonical map is still mysterious. To the best of our knowledge, the only known result is that $h^0(X, 2K_X)\geq 1$ if $\Vol(X)>879^3$, by Di Biagio \cite[Theorem~1.1]{LDi12} (see also \cite{Tod07}), and we even do not know whether $\Phi_{|2K_X|}$ is non-trivial for $\Vol(X)$ sufficiently large. 
 
The main goal of this paper is to show that $\Phi_{|2K_X|}$ is not a pencil for a smooth projective $3$-fold $X$ of general type with large canonical volume.
\begin{theorem}
\label{thm1.1}
Let $X$ be a smooth projective $3$-fold of general type. If $\Vol(X)>12^6$, then $\Phi_{|2K_X|}$ is not a pencil, namely,  $d_2(X)\coloneqq \dim \overline{\Phi_{|2K_X|}(X)}\geq 2$.
\end{theorem}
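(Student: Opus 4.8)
The plan is to argue by contradiction, assuming $\dim\overline{\Phi_{|2K_X|}(X)}\le 1$ and deducing $\Vol(X)\le 12^6$. First I would dispose of two easy reductions. If $p_g(X)\ge 3$ and $\Phi_{|K_X|}$ is not composed with a pencil, the image of $\mathrm{Sym}^2H^0(X,K_X)\to H^0(X,2K_X)$ already makes $\overline{\Phi_{|2K_X|}(X)}$ at least $2$-dimensional; so under our assumption either $p_g(X)\le 2$ or $|K_X|$ is composed with a pencil. And the possibility $h^0(X,2K_X)\le 1$ (with $\overline{\Phi_{|2K_X|}(X)}$ at most a point) is excluded by a Riemann--Roch estimate on a minimal model $X_{\min}$ (with terminal singularities): Kawamata--Viehweg vanishing gives $h^0(2K_X)=\chi(\OO_{X_{\min}}(2K_{X_{\min}}))$, and Reid's orbifold Riemann--Roch together with the known bounds on $\chi(\OO_{X_{\min}})$ and on the basket contribution in terms of $K_{X_{\min}}^3$ forces $h^0(2K_X)\ge 2$ once $\Vol(X)>12^6$. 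So from now on there is a genuine pencil.

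Next I would pass to a common log resolution $\pi\colon X'\to X_{\min}$, put $L:=\pi^*K_{X_{\min}}$ (nef and big, $L^3=\Vol(X)$), and arrange that the movable part $M$ of $|2K_{X'}|$ is base point free; after a suitable choice of $\pi$ one also has $2L\ge aF$. Since $\overline{\Phi_{|2K_X|}(X)}$ is a curve, the Stein factorisation of $\Phi_{|M|}$ is a fibration $f\colon X'\to B$ onto a smooth curve with $M\equiv aF$ for a general fibre $F$, and $a\ge h^0(X,2K_X)-1$ when $B\cong\bP^1$; by easy addition $\kappa(F)=2$, so $F$ is a smooth surface of general type. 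On $F$ I would use two restriction facts. As $F$ is a fibre, $F|_F\equiv 0$, so adjunction gives $K_{X'}|_F=K_F$, whence $\sigma:=L|_F\le K_F$ (the difference effective) and $\sigma$ is nef; moreover $\sigma$ is big, for otherwise $h^0(X',nL)$ would grow like $n^2$ instead of like $\tfrac16\Vol(X)n^3$. Thus $0<\sigma^2\le\sigma\cdot K_F\le K_F^2$, and Hodge index on $F$ gives $\sigma^2K_F^2\le(\sigma\cdot K_F)^2$.

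The crux is to identify $F$. Since $\Phi_{|2K_X|}$ contracts $F$, the restriction $H^0(X',2K_{X'})\to H^0(F,2K_F)$ has image of dimension $\le1$; but $H^0(X',2K_{X'})=H^0\!\bigl(B,(f_*\omega_{X'/B}^{\otimes2})\otimes\omega_B^{\otimes2}\bigr)$, where $\mathcal E:=f_*\omega_{X'/B}^{\otimes2}$ is a nef vector bundle on $B$ of rank $r=h^0(F,2K_F)=\chi(\OO_F)+K_F^2$. If $g(B)\ge1$ then $\mathcal E\otimes\omega_B^{\otimes2}$ is positive enough --- Fujita--Kawamata semipositivity, with global generation for $g(B)\ge3$ and an ad hoc analysis for $g(B)\in\{1,2\}$ --- that its evaluation at a general point of $B$ is surjective, forcing $r\le1$, which is impossible; hence $B\cong\bP^1$. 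Then $\mathcal E=\bigoplus_{i=1}^r\OO(a_i)$ with $a_1\ge\cdots\ge a_r\ge0$, one computes $h^0(2K_X)=\sum_{a_i\ge4}(a_i-3)$, and the hypothesis $\dim\le1$ forces $a_i\le3$ for every $i\ge2$. Feeding this splitting, the adjunction estimates above, and the residual information on $|K_X|$ into slope inequalities of Xiao--Ohno--Barja type for the relative (bi)canonical bundle of $f$, I would aim to prove that $F$ must be a $(1,2)$-surface or a $(2,3)$-surface --- precisely the fibred situation analysed in the body of the paper.

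Finally I would bound $\Vol(X)=L^3$. The subtle point is that every inequality obtained directly from ``$2L\ge aF$'' --- e.g.\ $\Vol(X)=L^3\ge\tfrac12 a\,\sigma^2$, from $L^2\cdot(2L-aF)\ge0$ --- bounds $\Vol(X)$ only from \emph{below}, so the required \emph{upper} bound must be extracted from the fibration itself. Concretely I would run the full canonical ladder of $X$: the canonical (resp.\ bicanonical) pencil of the surface $F$ lifts to a further fibration of $X'$ by curves, restrict $L$ down to its general fibre, and combine the resulting curve-level inequalities with the comparisons $K_{X'/B}=K_{X'}+2F$, $K_{X'}=L+E_\pi$ and with $\sigma^2\le K_F^2\in\{1,2\}$ to control $L^3$ in terms of $a$, $\sigma^2$, $K_F^2$ and the bounded exceptional term $E_\pi^3$; optimising over the short list of fibre types should then give $\Vol(X)\le 12^6$, the desired contradiction. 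I expect this last step --- simultaneously ruling out all fibre types other than $(1,2)$- and $(2,3)$-surfaces and balancing $a$ (which may be large, as the index of $X_{\min}$ may force $\sigma^2$ small) against the volume --- to be the main obstacle, and the careful accounting of constants there is what should produce the sharp threshold $12^6$.
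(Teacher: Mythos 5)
Your proposal takes a genuinely different route from the paper, and unfortunately it contains a critical unresolved gap that you yourself acknowledge, plus an intermediate claim that I believe is false.

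The paper does \emph{not} argue by contradiction or analyse the Stein factorisation of the bicanonical pencil. Instead it constructs non-klt centers via an Angehrn--Siu/Birkar type argument: from $\Vol(X)>12^6$ (used as $(\tfrac{1}{24}K_X)^3>6^3$), Proposition~\ref{prop G and Delta} produces, for general pairs $x,y$, a boundary $\Delta_{x,y}\sim_\bQ\delta'K_X$ with a unique non-klt place over a center $G_{x,y}$, and splits into three cases: (1) the centers are divisors sweeping out a fibration over a curve, (2) the centers are curves of genus $2$, (3) the centers are points. Case~(3) gives potential birationality directly; cases~(1) and~(2) are handled by \emph{effective extension theorems} (Theorem~\ref{thm:ext for surface} via Nadel vanishing and Koll\'ar's glct bound for surface fibers; Theorem~\ref{thmcurv} via inversion of adjunction for curve centers), which lift enough sections from the centers to $X$ to force $\dim\overline{\Phi_{|2K_X|}(X)}\geq 2$. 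Crucially the fibration case does \emph{not} reduce to $(1,2)$- or $(2,3)$-surfaces; it works uniformly for arbitrary fibers by splitting on $\Vol(F)\leq 11$ versus $\Vol(F)\geq 12$ (Corollary~\ref{cor X/C} and Proposition~\ref{prop:whenvolgeq12}). Theorem~\ref{thm1.3} about $(1,2)$- and $(2,3)$-surfaces is a separate bi-product, not an ingredient of Theorem~\ref{thm1.1}.

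The gap in your proposal: your entire final step --- ``feeding the splitting and slope inequalities of Xiao--Ohno--Barja type, $F$ must be a $(1,2)$- or $(2,3)$-surface'' followed by ``running the full canonical ladder... should then give $\Vol(X)\le 12^6$'' --- is presented as a plan, not a proof, and you yourself flag it as the main obstacle. There is no reason a pencil-type bicanonical map forces the fibers to be $(1,2)$- or $(2,3)$-surfaces: nothing in the slope-inequality machinery rules out, say, fibers with $K_F^2=5$ and $p_g(F)=0$. Even granting the fiber classification, the proposed upper bound on $L^3$ from the canonical ladder is not substantiated; the paper avoids exactly this difficulty by arguing in the forward direction (construct a boundary, then extend sections) rather than trying to bound the volume from above assuming a pencil. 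Also your opening Riemann--Roch claim that $\Vol(X)>12^6$ forces $h^0(2K_X)\geq 2$ is itself a non-trivial assertion: on a terminal minimal model Reid's formula has a $\chi(\mathcal O_X)$ term and a basket contribution, and controlling these in terms of $K_X^3$ alone is precisely the problem that Di~Biagio needed $\Vol(X)>879^3$ to resolve even for $h^0(2K_X)\geq 1$; you cannot simply invoke ``known bounds'' here. In short, the overall strategy is interesting but currently consists of a contradiction hypothesis plus a sequence of reductions whose decisive steps are left as expectations rather than arguments.
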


We briefly explain the difficulties and techniques in the proof. As in the study of $\Phi_{|5K_X|}$ in \cite{Tod07, LDi12, Chen12, CD25}, the basic idea is to create non-klt centers by constructing a boundary $\Delta\sim_{\bQ}\delta K_X$ for some $\delta<2$
and cut down the dimension of non-klt centers by the Angehrn--Siu type method.
The issue in dealing with $\Phi_{|2K_X|}$ instead of $\Phi_{|5K_X|}$ or $\Phi_{|3K_X|}$ is that, often we are not allowed to cut down the dimension of a non-klt center if the canonical volume of the non-klt center
is small (e.g., a surface with small canonical volume or a curve with geometric genus $2$), otherwise the constant $\delta$ will exceed $2$. In this case, instead of cutting down the dimension of the non-klt center, we prove certain extension theorems which allow us to lift global sections from the centers to $X$. 
Here we remark that in \cite{CL24}, the key step is to prove an extension theorem for fibrations, where many techniques such as the Kawamata--Viehweg vanishing theorem, canonical bundle formula, the Zariski--Nakayama decomposition, the Hacon--M\textsuperscript{c}Kernan extension theorem, and Birkar's lower bound of lct are involved, and the extension theorem is not effective; in this paper, we prove an effective version of extension theorem by using the Nadel vanishing theorem and for curve centers we do not require them to form a fibration.

As a by-product of our method, we can study the pluricanonical maps of $3$-folds of general type fibered by surfaces with small volumes. Here we recall that in \cite[Theorem~1.4, Theorem~1.5]{XuJ14}, Xu studied the pluricanonical maps of $3$-folds of general type not fibered by $(1,2)$-surfaces or $(2,3)$-surfaces. For example, it was shown that if a $3$-fold $X$ of general type is not fibered by $(1,2)$-surfaces and $\Vol(X)> 30^3$, then $\Phi_{|4K_X|}$ is birational. We
prove the following theorem concerning the pluricanonical maps of $3$-folds of general type fibered by $(1,2)$-surfaces or $(2,3)$-surfaces, as a complement of the results of Xu:
\begin{theorem}\label{thm1.3}
Let $X$ be a smooth $3$-fold of general type. Suppose that $X$ admits a fibration $X\to C$ to a curve with a general fiber denoted by $F$.
%Then we have the following assertions.
\begin{enumerate}
 \item If $F$ is a $(1,2)$-surface and $\Vol(X)> 60$, then $\Phi_{|mK_X|}$ is generically finite of degree $2$ for $2\leq m \leq 4$ and is birational for $m\geq 5$.
 \item If $F$ is a $(2,3)$-surface and $\Vol(X)>456$, then $\Phi_{|mK_X|}$ is generically finite of degree $2$ for $2\leq m \leq 3$ and is birational for $m\geq 4$.
\end{enumerate}
\end{theorem}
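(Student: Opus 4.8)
The plan is to analyze the fibration $f\colon X\to C$ and run the standard "curve + surface fiber" machinery, bootstrapping from the known geometry of $(1,2)$-surfaces and $(2,3)$-surfaces. Let $g$ be the genus of $C$ and $F$ a general fiber. The first step is the reduction to the case $g\geq 2$ (or $g\geq 1$): by passing to a suitable pluricanonical system and using that $\Vol(X)$ is large, one forces $f$ to be essentially the Albanese-type fibration or else $|mK_X|$ already separates the fibers through the restriction map $H^0(X,mK_X)\to H^0(F,mK_F)$. Indeed, since $F$ is a $(1,2)$-surface (resp. $(2,3)$-surface) with $K_F^2=1$ (resp. $K_F^2=\tfrac{3}{2}$ in the canonical model) and $p_g(F)=2$ (resp. $3$), the behavior of $\Phi_{|mK_F|}$ on the fiber is classically understood: $\Phi_{|mK_F|}$ is generically finite of degree $2$ for the small $m$ in the range and birational above it. So the whole point is to (i) lift enough sections from $F$ to $X$, and (ii) separate distinct fibers.

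The second step is the section-lifting. Write $mK_X\sim_{\mathbb Q} mK_{X/C} + mf^*K_C$ and note $mK_{X/C}|_F = mK_F$. The key tool available from earlier in the paper is the effective extension theorem proved via the Nadel vanishing theorem; applying it to the fiber $F$ (together with a Kawamata–Viehweg/Nadel vanishing argument to kill $H^1$ of the twisted ideal sheaf on $X$) shows that $H^0(X,mK_X)\to H^0(F,mK_F)$ is surjective once $\Vol(X)$ — equivalently the positivity of $f_*\omega_{X/C}^{\otimes m}$, which grows with the volume and the genus via the canonical bundle formula / Fujita–type semipositivity — is large enough. This is where the explicit thresholds $60$ and $456$ come from: one needs $\Vol(X) = K_X^3$ big enough that, after subtracting the fiber and an auxiliary ample-type divisor, the residual $\mathbb{Q}$-divisor on $X$ is still big with the right multiplier ideal (triviality of $\mathcal{J}$ along $F$), and simultaneously that $\deg f_*\omega_{X/C}^{\otimes m}$ beats $2g-2$ so that distinct fibers are separated. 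A short computation with the slope inequality / Xiao-type bounds relating $K_X^3$, $K_F^2$, and $g$ pins down these numbers.

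The third step handles separation of fibers and the degree-$2$ statement. For $m$ in the birational range of $\Phi_{|mK_F|}$, once sections lift and fibers separate, $\Phi_{|mK_X|}$ is birational; for the small $m$ where $\Phi_{|mK_F|}$ has degree exactly $2$, one argues that $\Phi_{|mK_X|}$ cannot do better than degree $2$ (the generic fiber of $\Phi_{|mK_X|}$ meets a general $F$ in the degree-$2$ fiber of $\Phi_{|mK_F|}$, since $mK_X|_F = mK_F$), and cannot do worse than degree $2$ because sections lift and fibers are separated — giving generic finiteness of degree exactly $2$. The main obstacle is Step 2: making the extension theorem apply with an \emph{effective} and \emph{small} volume bound, i.e. controlling the multiplier ideal $\mathcal{J}(X,\Delta)$ along a single fiber $F$ while keeping $\Delta\sim_{\mathbb Q}\delta K_X$ with $\delta<1$ (or the appropriate fractional bound), which forces a delicate interplay between the canonical bundle formula estimate for $\deg f_*\omega_{X/C}^{\otimes m}$ and Di Biagio/Chen–Ding–type lower bounds on $\Vol(X)$ in terms of the fiber invariants. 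The rest is bookkeeping with Bombieri's surface results applied to $F$ and the elementary geometry of $(1,2)$- and $(2,3)$-surfaces.
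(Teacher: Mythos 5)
Your high-level picture — lift sections from the fiber, separate distinct fibers, then quote Bombieri/Horikawa on $(1,2)$- and $(2,3)$-surfaces — matches the paper in outline, but the mechanism you propose for the crucial middle step is not what the paper does, and as written it would not yield the stated thresholds.

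The paper's proof is short: it applies Theorem~\ref{thm:ext for surface} with $k=2$ to get surjectivity of $H^0(X,mK_X)\to H^0(F_1,mK_{F_1})\oplus H^0(F_2,mK_{F_2})$ for two distinct general fibers simultaneously, then invokes Tankeev's principle (Proposition~\ref{prop tankeev}) together with Theorem~\ref{surface}. The hypothesis of Theorem~\ref{thm:ext for surface} is $\Vol(X)>6\Vol(F)/\min\{\glct(F_0),1\}$, and the numbers $60$ and $456$ fall out immediately from Koll\'ar-type lower bounds for $\glct$: for a minimal $(1,2)$-surface $\glct\geq 1/10$ (so $6\cdot 1\cdot 10=60$), and for a minimal $(2,3)$-surface $\glct\geq 1/38$ by Lemma~\ref{lemma 2,3} (so $6\cdot 2\cdot 38=456$). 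You instead attribute the thresholds to a slope-inequality / Xiao-type bound linking $K_X^3$, $K_F^2$ and the base genus, plus Fujita semipositivity of $f_*\omega_{X/C}^{\otimes m}$. None of that machinery is used, and it is unclear it could be made effective enough to produce exactly these bounds. In particular, the explicit route goes through the global log canonical threshold of the minimal model of the general fiber, not through the degree of $f_*\omega_{X/C}^{\otimes m}$.

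Two further points. First, you treat lifting sections and separating fibers as two independent problems; in the paper both are handled in one stroke by applying the extension theorem with $k=2$ fibers, after which Tankeev's principle (Proposition~\ref{prop tankeev}) directly gives that $\Phi_{|mK_X|}$ has the same generic degree as $\Phi_{|mK_F|}$ on $F$. No reduction to the case $g(C)\geq 1$ or $\geq 2$ is needed, and no separate argument that the degree cannot drop below $2$ is required. Second, a factual slip: for a $(2,3)$-surface, $\Vol(S)=K_{S_0}^2=2$ for the minimal model $S_0$, not $3/2$.
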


\section{Preliminaries}
Throughout this paper, we work over the complex number field $\mathbb C$.
We will freely use the basic notation in \cite{KMM, K-M}.

\subsection{Conventions}
A {\it fibration} is a projective surjective morphism between normal varieties with connected fibers.

A $\mathbb{Q}$-divisor is said to be {\it $\mathbb{Q}$-effective} if it is $\mathbb{Q}$-linearly equivalent to an effective $\mathbb{Q}$-divisor.

For a $\mathbb{Q}$-Cartier Weil divisor $D$ on a normal projective variety $X$, $\Phi_{|D|}$ denotes the rational map induced by the linear system $|D|$ on $X$. 

For a smooth projective variety $X$, its {\it geometric genus} is defined by $p_g(X):=h^0(X, K_X)$ and its {\it canonical volume} is defined by $\Vol(X):=\Vol(X, K_X)$ (see Definition~\ref{def volume}). In general, for a projective variety $X$, its {\it geometric genus} is defined by $p_g(X):=p_g(X')$ and its {\it canonical volume} is defined by $\Vol(X):=\Vol(X')$ where $X'\to X$ is a resolution. $X$ is said to be {\it of general type} if $\Vol(X)>0$.

 A variety $X$ is said to be {\it minimal} if $X$ is a $\mathbb{Q}$-factorial terminal projective variety with $K_X$ nef.

An $(a,b)$-surface is a smooth projective surface $S$ of general type with $\Vol(S)=a$ and $p_g(S)=b$.

\subsection{Volumes}We recall the definition of volume of a divisor.
\begin{definition}\label{def volume} 
Let $X$ be an $n$-dimensional projective variety and $D$ be a Cartier divisor on $X$. The {\it volume} of $D$ is the real number
$$
{\Vol}(X, D)=\limsup_{m\rightarrow \infty}\frac{h^0(X, mD )}{m^n/n!}.
$$
Note that the limsup is actually a limit. Moreover by the homogenous property of volumes, we can extend the definition to $\bQ$-Cartier $\bQ$-divisors. 
It is known that $\Vol(X, D)=D^n$ if $D$ is a nef $\mathbb{Q}$-Cartier $\mathbb{Q}$-divisor.
For more background on volumes, see \cite[2.2.C]{Positivity1} and \cite[11.4.A]{Positivity2}.
\end{definition}

The following lemma is well-known to experts.
\begin{lemma}[{See \cite[Lemma~2.5]{Jiang-AJM}}]\label{lem:volume}
 Let $X$ be a normal projective variety and let $X\to C$ be a fibration to a curve with a general fiber denoted by $F$. Let $D$ be a $\mathbb{Q}$-Cartier $\mathbb{Q}$-divisor on $X$. Then for any rational number $t>0$,
\[
\Vol(X, D-tF)\geq \Vol(X, D)-t\dim X\cdot \Vol(F, D|_F).
\]
\end{lemma}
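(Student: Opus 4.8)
The statement to prove is Lemma 2.5 (the volume inequality for fibrations over curves). Here is a proof plan.

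\bigskip

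The plan is to reduce to the case where $D$ is a Cartier divisor, and then estimate $h^0$ of $D-tF$ from below by restricting sections to fibers. First I would clear denominators: by the homogeneity of volumes, it suffices to prove the inequality after replacing $D$ by $mD$ and $t$ by $mt$ for a suitable positive integer $m$, so I may assume $D$ is Cartier and $t$ is a positive integer. Next, by passing to a resolution one reduces to the case $X$ smooth (the volume is a birational invariant, and the fibration structure pulls back), and after a further birational modification I may assume that the general fiber $F$ is smooth; moreover since we only care about $\Vol(F, D|_F)$ with $D|_F$ a genuine divisor on a smooth surface $F$, this is harmless.

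\bigskip

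The heart of the argument is the short exact sequence obtained by successively peeling off fibers. For each integer $k\geq 0$ there is an inclusion
\[
0 \longrightarrow \OO_X\big(mD-(k+1)F\big) \longrightarrow \OO_X\big(mD-kF\big) \longrightarrow \OO_F\big((mD-kF)|_F\big) = \OO_F(mD|_F),
\]
using that $F$ is a fiber so $F|_F \sim 0$. Taking global sections gives
\[
h^0\big(X, mD-kF\big) \leq h^0\big(X, mD-(k+1)F\big) + h^0\big(F, mD|_F\big).
\]
Summing this from $k=0$ to $k=mt-1$ yields
\[
h^0\big(X, mD\big) \leq h^0\big(X, mD-mtF\big) + mt\cdot h^0\big(F, mD|_F\big).
\]
Now I divide by $m^{n}/n!$ where $n=\dim X$, and take the limit as $m\to\infty$. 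The left side tends to $\Vol(X,D)$ and the first term on the right tends to $\Vol(X, D-tF)$. For the last term, $h^0(F, mD|_F)/\big((m^n/n!)\big) = \big(h^0(F, mD|_F)/(m^{n-1}/(n-1)!)\big)\cdot \big(n/m\big)$; since $F$ has dimension $n-1$, the bracketed quantity converges to $\Vol(F, D|_F)$, but it is multiplied by $n/m \to 0$, so naively this term vanishes. The point is that one must be slightly more careful: rather than taking $t$ fixed, keep $t$ as it is and note $mt\cdot h^0(F,mD|_F)\big/(m^n/n!) = t\cdot n\cdot \big(h^0(F,mD|_F)/(m^{n-1}/(n-1)!)\big) \to t\cdot n\cdot \Vol(F, D|_F)$. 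Rearranging gives exactly
\[
\Vol(X, D-tF) \geq \Vol(X, D) - t\dim X\cdot \Vol(F, D|_F).
\]

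\bigskip

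The main obstacle I expect is bookkeeping with the non-integrality of $D$ and $t$: one has to be careful that after clearing denominators the fiber $F$ still appears with the correct integer coefficient and that $mD|_F$ is the pullback of the divisor class whose volume we want, so that the limit computation on $F$ genuinely produces $\Vol(F, D|_F)$. A secondary subtlety is that $D-tF$ need not be effective or even pseudoeffective, in which case $\Vol(X, D-tF)=0$ and the inequality is still correct (it is then the statement $\Vol(X,D)\leq t\dim X\cdot\Vol(F,D|_F)$, which follows from the same summation since all $h^0$ terms are nonnegative); handling this degenerate case in parallel with the main case requires only that one never divide by a possibly-zero quantity, which the argument above respects. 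Everything else — the exactness of the restriction sequence, the birational invariance of volumes, the asymptotic Riemann–Roch estimate on $F$ — is standard.
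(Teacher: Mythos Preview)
Your proposal is correct and follows essentially the same approach as the paper: peel off fibers one at a time via the restriction exact sequence, sum the resulting inequalities, divide by $m^{\dim X}/(\dim X)!$, and pass to the limit. The paper's version is slightly leaner---it skips the reduction to a resolution (which is unnecessary since the exact sequence and volume asymptotics work on any normal variety) and avoids the confusing ``naively this term vanishes'' detour---but the mathematical content is identical.
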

\begin{proof}
Take a sufficiently divisible positive integer $m$ such that $mt$ is an integer and $mD$ is Cartier. Note that $F|_F\sim 0.$
By considering the exact sequences 
\[
0\to H^0(X, mD-kF)\to H^0(X, mD-(k-1)F)\to H^0(F, mD|_F)
\]
for $1\leq k\leq mt$, we have 
\begin{align*}
 h^0(X, mD-mtF) \geq h^0(X, mD)-mt\cdot h^0(F, mD|_F).
\end{align*}
Dividing by $\frac{m^{\dim X}}{
(\dim X)!}$ and
taking limit, we get the desired inequality.
\end{proof}
\subsection{Singularities}
We recall basic knowledge of singularities.
\begin{definition} A {\it pair} $(X, \Delta)$ consists of a normal variety $X$ and an effective $\mathbb{Q}$-divisor $\Delta$ such that $K_X+\Delta$ is $\mathbb{Q}$-Cartier. 

A prime divisor $E$ \emph{over} $X$ is a prime divisor on some model $Y$ where $f\colon Y\to X$ is a proper birational morphism. The image $f(E)$ is called the \emph{center} of $E$ on $X$ and is denoted by $c_X(E)$. 
Write
\[
 K_Y=f^*(K_X+\Delta)+\sum_Ea(E,X,\Delta)E,
\]
where $E$ runs over prime divisors on $Y$, then $a(E, X, \Delta)\in \mathbb Q$ is called the \emph{discrepancy} of $E$.
Then $(X,\Delta)$ is said to be 
\begin{enumerate}
\item \emph{terminal} if $a(E, X, \Delta)> 0$ for all prime exceptional divisor $E$ over $X$;
 \item \emph{klt} (short for {\it Kawamata log terminal}) 
 if $a(E, X, \Delta)> -1$ for all prime divisor $E$ over $X$;
 \item \emph{plt} (short for {\it purely log terminal}) if $a(E, X, \Delta)> -1$ for all prime exceptional divisor $E$ over $X$;
 \item \emph{lc} (short for {\it log canonical}) if $a(E, X, \Delta) \geq -1$ for all prime divisor $E$ over $X$.
\end{enumerate}
Usually we say that $X$ is terminal (resp. klt, plt, lc) if $(X, 0)$ is terminal (resp. klt, plt, lc). 
\end{definition}

\begin{definition}
Let $(X,\Delta)$ be a pair. 
A closed subvariety $Z\subset X$ is called a {\em non-klt center} of $(X, \Delta)$ if there exists a prime divisor $E$ over $X$ 
such that $a(E,X,\Delta)\leq -1$ and $c_X(E)=Z$; the divisorial valuation induced by such $E$ is called a {\it non-klt place} over $Z$.

A non-klt center is called {\it minimal} (resp., {\it maximal}, {\it isolated}) if it does not contain (resp., is not contained in, does not intersect with) any other non-klt center.
\end{definition}

We recall well-known basic properties of non-klt centers on lc pairs. 
\begin{proposition}[{See \cite{Amb06}}]\label{prop non-klt center}
 Let $(X,\Delta)$ be an lc pair.
 \begin{enumerate}
 \item Any irreducible component of the intersection of $2$ non-klt centers of $(X, \Delta)$ is a non-klt center of $(X, \Delta)$.

\item Any minimal non-klt center of $(X, \Delta)$ is normal. 

\item If $Z$ is a non-klt center of $(X, \Delta)$ and it is both maximal and minimal, then it is isolated.

\item If $Z$ is a non-klt center of $(X, \Delta)$ with a unique non-klt place over $Z$, then $Z$ is a maximal non-klt center.
 
 \end{enumerate}
\end{proposition}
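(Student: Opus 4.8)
The plan is to reduce all four statements to the case of a $\bQ$-factorial dlt pair. First I would pass to a $\bQ$-factorial dlt modification $g\colon (Y,\Gamma)\to (X,\Delta)$, i.e.\ a projective birational morphism with $(Y,\Gamma)$ $\bQ$-factorial dlt and $K_Y+\Gamma=g^*(K_X+\Delta)$, where $\Gamma$ is the birational transform of $\Delta$ together with the $g$-exceptional prime divisors taken with coefficient $1$ (dlt modifications exist by standard results in the minimal model program). Since $g$ is crepant, $a(E,Y,\Gamma)=a(E,X,\Delta)$ for every prime divisor $E$ over $X$, so $(X,\Delta)$ and $(Y,\Gamma)$ have the same non-klt places; hence $Z\subseteq X$ is a non-klt center of $(X,\Delta)$ if and only if $Z=g(W)$ for some non-klt center $W$ of $(Y,\Gamma)$. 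I would then combine this with the classical description of the non-klt centers of a dlt pair: they are exactly the irreducible components of the intersections $\bigcap_{j\in J}\Gamma_j$ of components $\Gamma_j$ of $\lfloor\Gamma\rfloor$ (the dlt strata), and each dlt stratum is normal.

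For (2), write a minimal non-klt center $Z$ of $(X,\Delta)$ as $Z=g(W)$ with $W$ a dlt stratum; replacing $W$ by a substratum and using minimality of $Z$, one may take $W$ to be a \emph{minimal} dlt stratum, so that $W$ is normal. It remains to see that $Z=g(W)$ is normal, which I would deduce from the normality of $W$ together with the fact that the modification $g$ does not destroy normality along non-klt centers --- controlling $g_*$ by the Kawamata--Viehweg vanishing theorem and the connectedness of $g^{-1}(z)$ intersected with the non-klt locus of $(Y,\Gamma)$ (Koll\'ar--Shokurov connectedness). Alternatively one simply invokes Kawamata's subadjunction, which yields the normality of minimal non-klt centers directly.

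Statement (1) is, I expect, the main obstacle. On the dlt model it is essentially combinatorial: if $Z_1=g(W_1)$ and $Z_2=g(W_2)$ for dlt strata $W_1,W_2$, then every component of $W_1\cap W_2$ is again a dlt stratum. However this does not descend verbatim, since $W_1\cap W_2$ need not dominate a prescribed component $V$ of $Z_1\cap Z_2$. To treat $V$ directly I would pass to a common log resolution $f\colon Y'\to X$ on which non-klt places $E_1$ of $Z_1$ and $E_2$ of $Z_2$ appear as divisors, localize at the generic point of $V$, and use the connectedness lemma to conclude that the non-klt divisors of $(X,\Delta)$ lying over that point form a connected set; a chain of such divisors from $E_1$ to $E_2$, combined with an induction on $\dim X$ via Ambro's adjunction for (quasi-)log canonical centers, then produces a non-klt place of $(X,\Delta)$ with center exactly $V$. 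This is precisely the content of \cite{Amb06}, which I would invoke once the reductions are in place.

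Finally, (3) and (4) follow formally. For (3): if $Z$ is both maximal and minimal but meets some non-klt center $Z'\neq Z$, then by (1) a component $V$ of $Z\cap Z'$ is a non-klt center; since $V\subseteq Z$ and $Z$ is minimal, $V=Z$, so $Z\subseteq Z'$, and since $Z$ is maximal, $Z=Z'$, a contradiction; hence $Z$ is isolated. For (4): suppose $Z$ has a unique non-klt place $E$ but is not maximal, so $Z\subsetneq Z'$ for some non-klt center $Z'$. Choose a log resolution $f\colon Y'\to X$ of $(X,\Delta)$ on which $E$ and a non-klt place $E'$ of $Z'$ both appear as prime divisors; then $E\neq E'$ since $f(E)=Z\subsetneq Z'=f(E')$, and both $E$ and $E'$ meet the fiber $f^{-1}(\eta)$ over the generic point $\eta$ of $Z$. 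By the connectedness lemma the union of the prime divisors on $Y'$ having discrepancy $-1$ over $(X,\Delta)$ meets $f^{-1}(\eta)$ in a connected set, so there is such a prime divisor $D\neq E$ with $(E\cap D)\cap f^{-1}(\eta)\neq\emptyset$. Any component $W$ of $E\cap D$ meeting $f^{-1}(\eta)$ has codimension two in $Y'$ and satisfies $f(W)=Z$; since $(X,\Delta)$ is log canonical, blowing up $W$ produces a prime divisor over $(X,\Delta)$ with discrepancy $-1$ and center $Z$ that is distinct from $E$, contradicting the uniqueness of $E$. Therefore $Z$ is maximal.
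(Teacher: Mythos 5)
Your proof is correct and follows essentially the same route as the paper: parts (1) and (2) ultimately rest on the citation to Ambro (the paper cites \cite[Theorem~1.1(2),(4)]{Amb06} directly; your dlt-modification preamble is a reasonable but unnecessary reduction that ends at the same citation), part (3) is the identical formal deduction from (1), and part (4) is the same log-resolution argument: apply the Koll\'ar--Shokurov connectedness lemma over the generic point of $Z$ to find a second coefficient-one divisor $D$ meeting $E$ there, then blow up a component of $E\cap D$ dominating $Z$ to manufacture a second non-klt place over $Z$, contradicting uniqueness. The only cosmetic difference in (4) is that you argue by contradiction against non-maximality with an explicit larger center $Z'$, while the paper shrinks $X$ around the generic point of $Z$ and shows directly that $E$ is then the only non-klt place, but the mechanism (connectedness plus the codimension-two blow-up on a log resolution, where the snc hypothesis guarantees the new exceptional divisor has discrepancy $-1$) is identical.
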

\begin{proof}
 (1) This is \cite[Theorem~1.1(2)]{Amb06}. 
 
 (2) This is \cite[Theorem~1.1(4)]{Amb06}. 

(3) By the maximality, $Z$ is not contained in any other non-klt center.
So if $Z$ intersects another non-klt center, then it properly contains another non-klt center by (1), which contradicts to the minimality of $Z$.

(4) 
Take $\pi:W\to X$ to be a log resolution of $(X, \Delta)$ and write $K_W+\Delta_W=\pi^*(K_X+\Delta)$.
Suppose that $E$ is the unique prime divisor on $W$ such that $\mult_E\Delta_W=1$ and $c_X(E)=Z$. 
We may write $\Delta_W=\Delta_W^+-\Delta_W^-$
 where $\Delta_W^+$ and $\Delta_W^- $ are effective $\mathbb{Q}$-divisors without common irreducible component.
After shrinking $X$ around the generic point of $Z$, we may assume that the center of any irreducible component of $\lfloor\Delta_W^+\rfloor$ contains $Z$.
Suppose that $\lfloor\Delta_W^+\rfloor\neq E$, then by the connectedness lemma \cite[Theorem~5.48]{K-M}, there exists a prime divisor $E'\neq E$ on $W$
such that $\mult_{E'}\Delta_W=1$ and $E\cap E'$ dominates $Z$, then the blowing up along $E\cap E'$ induces another non-klt place of $(X,\Delta)$ over $Z$, a contradiction to the uniqueness of $E$. 
Hence $E$ is the unique non-klt place of $(X,\Delta)$ after shrinking, which proves the maximality of $Z$. 
\end{proof}

\subsection{Multiplier ideal sheaves}
We recall the definition of multiplier ideal sheaves and their relation with non-klt centers.
\begin{definition}[{See \cite[Definition~9.3.56]{Positivity2}}] Let $(X,\Delta)$ be a pair. Let $\pi : W \to X$ be a log resolution of $(X, \Delta)$. Then the {\it multiplier ideal sheaf} of $(X,\Delta)$ is defined as 
$$
\mathcal{J}(X,\Delta):=\pi_*\OO_{W}(K_W-\lfloor\pi^*(K_X+\Delta)\rfloor)\subset\OO_X.
$$
By definition, the cosupport of $\mathcal{J}(X, \Delta)$ is exactly the union of non-klt centers of $(X, \Delta)$. 
\end{definition}
 We recall the Nadel vanishing theorem (see \cite[Theorem~9.4.17]{Positivity2}). 
\begin{theorem}[Nadel vanishing theorem]\label{nadel} Let $(X, \Delta)$ be a projective pair and let $D$ be a Cartier divisor on $X$ such that $D-K_X-\Delta$ is nef and big.
Then 
$$
H^i(X,\OO_X(D)\otimes\mathcal{J}(X, \Delta))=0
$$
for any integer $i>0$.
\end{theorem}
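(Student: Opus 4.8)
\medskip

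The plan is to reduce the assertion to the Kawamata--Viehweg vanishing theorem applied on a log resolution. Fix a log resolution $\pi\colon W\to X$ of $(X,\Delta)$; refining it by further blow-ups if necessary (which alters neither $\mathcal{J}(X,\Delta)$ nor the hypotheses) we may assume in addition that the supports of all the fractional divisors occurring below form a simple normal crossings divisor. By definition $\mathcal{J}(X,\Delta)=\pi_*\OO_W(K_W-\lfloor\pi^*(K_X+\Delta)\rfloor)$, and since $D$ is Cartier the projection formula gives
\[
\OO_X(D)\otimes\mathcal{J}(X,\Delta)\;\cong\;\pi_*\OO_W\bigl(\pi^*D+K_W-\lfloor\pi^*(K_X+\Delta)\rfloor\bigr).
\]

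First I would establish the local vanishing $R^j\pi_*\OO_W(K_W-\lfloor\pi^*(K_X+\Delta)\rfloor)=0$ for all $j>0$. Here
\[
\bigl(K_W-\lfloor\pi^*(K_X+\Delta)\rfloor\bigr)-K_W\;=\;-\pi^*(K_X+\Delta)+\{\pi^*(K_X+\Delta)\},
\]
where $-\pi^*(K_X+\Delta)$ is numerically trivial over $X$, hence $\pi$-nef and $\pi$-big (as $\pi$ is birational), and $\{\pi^*(K_X+\Delta)\}$ is effective with coefficients in $[0,1)$ and simple normal crossings support; the relative Kawamata--Viehweg vanishing theorem therefore applies and yields the vanishing of these higher direct images. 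Feeding this into the Leray spectral sequence for $\pi$ (using the projection formula once more for the higher direct images), the spectral sequence degenerates and gives
\[
H^i\bigl(X,\OO_X(D)\otimes\mathcal{J}(X,\Delta)\bigr)\;\cong\;H^i\bigl(W,\OO_W(\pi^*D+K_W-\lfloor\pi^*(K_X+\Delta)\rfloor)\bigr)
\]
for every $i$.

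It then remains to apply Kawamata--Viehweg vanishing on the smooth projective variety $W$ to the integral divisor $M:=\pi^*D+K_W-\lfloor\pi^*(K_X+\Delta)\rfloor$. Writing $-\lfloor\pi^*(K_X+\Delta)\rfloor=-\pi^*(K_X+\Delta)+\{\pi^*(K_X+\Delta)\}$ once more, one gets
\[
M\;=\;K_W+\pi^*(D-K_X-\Delta)+\{\pi^*(K_X+\Delta)\},
\]
where $\pi^*(D-K_X-\Delta)$ is nef and big, being the pullback under the birational morphism $\pi$ of the nef and big $\bQ$-divisor $D-K_X-\Delta$, and $\{\pi^*(K_X+\Delta)\}$ is an effective divisor with coefficients in $[0,1)$ whose support, by our choice of $\pi$, has simple normal crossings together with the support of the fractional part of $\pi^*(D-K_X-\Delta)$. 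Thus $M$ has the shape $K_W+(\text{nef and big})+(\text{fractional part with simple normal crossings support})$, and the Kawamata--Viehweg vanishing theorem in that form gives $H^i(W,\OO_W(M))=0$ for all $i>0$, which is the desired vanishing. (If one prefers the version of Kawamata--Viehweg vanishing stated only for an ample part, Kodaira's lemma lets us first replace $\pi^*(D-K_X-\Delta)$ by an ample $\bQ$-divisor plus a small effective one, absorbing the latter into the fractional part.)

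The substance of the argument lies entirely in these two vanishing theorems; the only point that needs care is the bookkeeping. Because $K_X$ alone need not be $\bQ$-Cartier, every pullback and every rounding operation must be performed through the $\bQ$-Cartier divisor $K_X+\Delta$ (equivalently, through the relative log discrepancy divisor $K_W-\pi^*(K_X+\Delta)$), and the log resolution has to be chosen fine enough that all fractional parts appearing in both applications of vanishing have simple normal crossings support. These are the places where I would be most careful when writing out the details.
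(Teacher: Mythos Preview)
Your argument is correct and is essentially the standard proof of the Nadel vanishing theorem as presented in \cite[Theorem~9.4.17]{Positivity2}. Note, however, that the paper does not actually prove this statement: it is simply recalled with a reference to Lazarsfeld's book, so there is no ``paper's own proof'' to compare against. For what it is worth, the paper does prove the closely related Lemma~\ref{lem nadel2} (the non-Cartier variant) by exactly the same mechanism you use---Kawamata--Viehweg vanishing on a log resolution, combined with the vanishing of higher direct images and a spectral sequence argument---so your approach is fully in line with the paper's methods.
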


Note that in the Nadel vanishing theorem, $D$ is assumed to be Cartier. 
When $D$ is not necessarily Cartier, we have the following generalization, which is well-known to experts. 
\begin{lemma} \label{lem nadel2} Let $(X, \Delta)$ be a projective pair and let $D$ be a $\mathbb{Q}$-Cartier $\mathbb{Q}$-divisor on $X$ such that $D-K_X-\Delta$ is nef and big. Let $\pi : W \to X$ be a log resolution of $(X, \Delta+D)$. 
Then 
\begin{enumerate}
 \item For any integer $i>0$, $$
H^i(X,\pi_*\OO_{W}(K_W+\lceil\pi^*(D-K_X-\Delta)\rceil))=0.
$$

\item Suppose that $U\subset X$ is an open subset such that $D|_U$ is integral and Cartier, then
\[
\pi_*\OO_{W}(K_W+\lceil\pi^*(D-K_X-\Delta)\rceil)|_U\simeq \OO_U(D)\otimes\mathcal{J}(X, \Delta)|_U.
\]
\end{enumerate}
\end{lemma}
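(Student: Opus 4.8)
The plan is to pull everything back to $W$, where $K_W+\lceil\pi^*(D-K_X-\Delta)\rceil$ is a Cartier divisor that differs from $K_W$ plus a klt boundary by a nef and big class, and then to combine the ordinary (global) Kawamata--Viehweg/Nadel vanishing on $W$ with the relative Kawamata--Viehweg vanishing for the birational morphism $\pi$. Set $N:=D-K_X-\Delta$, a nef and big $\bQ$-Cartier $\bQ$-divisor, so that $\pi^*N$ is nef (pullback of a nef divisor) and big ($\pi$ is birational). Since $\pi$ is a log resolution of $(X,\Delta+D)$, the divisor $F:=\lceil\pi^*N\rceil-\pi^*N=\{-\pi^*N\}$ is effective, has simple normal crossing support, and has all coefficients in $[0,1)$; hence $(W,F)$ is klt and $\mathcal{J}(W,F)=\OO_W$. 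Moreover $L:=K_W+\lceil\pi^*N\rceil$ is an integral, hence Cartier, divisor on the smooth variety $W$, and $L-(K_W+F)=\pi^*N$ is nef and big.

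For part (1), I would apply Theorem~\ref{nadel} to the projective pair $(W,F)$ and the Cartier divisor $L$ (whose associated multiplier ideal $\mathcal{J}(W,F)$ is trivial), or just Kawamata--Viehweg vanishing on $W$, to get $H^i(W,\OO_W(L))=0$ for all $i>0$. On the other hand, $\pi^*N$ is $\pi$-nef, and it is $\pi$-big because $\pi$ is birational, so the relative Kawamata--Viehweg vanishing theorem gives $R^j\pi_*\OO_W(L)=0$ for all $j>0$. Feeding these into the Leray spectral sequence $H^p(X,R^q\pi_*\OO_W(L))\Rightarrow H^{p+q}(W,\OO_W(L))$, which therefore degenerates, we obtain $H^i(X,\pi_*\OO_W(L))\cong H^i(W,\OO_W(L))=0$ for all $i>0$, which is exactly (1).

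For part (2), the assertion is local on $X$, so I would restrict to $\pi^{-1}(U)$. Since $D|_U$ is integral and Cartier, $\pi^*D$ is an integral Cartier divisor on $\pi^{-1}(U)$ with $\OO_{\pi^{-1}(U)}(\pi^*D)\cong\pi^*\OO_U(D)$. Using the identity $\lceil A+B\rceil=A+\lceil B\rceil$ for integral $A$ together with $\lceil -B\rceil=-\lfloor B\rfloor$, over $\pi^{-1}(U)$ we get $L=K_W+\lceil\pi^*D-\pi^*(K_X+\Delta)\rceil=K_W-\lfloor\pi^*(K_X+\Delta)\rfloor+\pi^*D$. Pushing forward and applying the projection formula yields $\pi_*\OO_W(L)|_U\cong\pi_*\OO_W(K_W-\lfloor\pi^*(K_X+\Delta)\rfloor)|_U\otimes\OO_U(D)$. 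Finally, $\pi$ restricted to $\pi^{-1}(U)$ is a log resolution of $(U,\Delta|_U)$ and the multiplier ideal does not depend on the chosen log resolution, so the first tensor factor equals $\mathcal{J}(X,\Delta)|_U$, which gives (2).

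The argument is essentially routine (the statement is noted to be well known to experts), so I do not expect a serious obstacle; the points that need a little care are the simple normal crossing property of $\{-\pi^*N\}$, which is precisely what the log-resolution hypothesis provides, the use of relative Kawamata--Viehweg vanishing over the birational $\pi$ (where $\pi$-bigness holds trivially, the generic fiber being a point), and the round-up/round-down bookkeeping over $U$, where one must keep in mind that $\pi^*D$ need not be effective but is nonetheless an integral Cartier divisor there.
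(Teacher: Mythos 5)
Your proposal is correct and follows essentially the same route as the paper: for (1) the paper likewise combines global Kawamata--Viehweg vanishing on $W$ with the relative Kawamata--Viehweg vanishing $R^i\pi_*\OO_W(K_W+\lceil\pi^*(D-K_X-\Delta)\rceil)=0$ and then a spectral-sequence argument, and for (2) it also invokes the projection formula after observing $D|_U$ is Cartier. You have simply spelled out the details (the decomposition $L=K_W+F+\pi^*N$ with $F=\{-\pi^*N\}$, the automatic $\pi$-bigness over a birational map, and the round-up/round-down computation over $U$) that the paper leaves implicit.
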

\begin{proof}
(1) By the (relative) Kawamata--Viehweg vanishing theorem \cite[Theorem~1.2.5]{KMM},
 for any integer $i>0$,
 \begin{align*}
 H^i(W, K_W+\lceil\pi^*(D-K_X-\Delta)\rceil)=0,\\
 R^i\pi_*\OO_{W}(K_W+\lceil\pi^*(D-K_X-\Delta)\rceil)=0.
 \end{align*}
So we get the conclusion by a standard spectral sequence argument. 
 
(2) The conclusion follows directly by the projection formula as $D|_U$ is Cartier. To do the computation precisely, we may assume that $X=U$ after shrinking $X$, then as $\pi^*D$ is Cartier, 
\begin{align*}
  {}&  \pi_*\OO_{W}(K_W+\lceil\pi^*(D-K_X-\Delta)\rceil)\\
    ={}&\pi_*\OO_{W}(K_W+\pi^*D+\lceil-\pi^*(K_X+\Delta)\rceil)\\
    \simeq{}&\OO_X(D)\otimes\pi_*\OO_{W}(K_W+\lceil-\pi^*(K_X+\Delta)\rceil)\\ 
 ={}&    \OO_X(D)\otimes\mathcal{J}(X, \Delta).
\end{align*}
The proof is complete.
\end{proof}

The following lemma allows us to compute the multiplier ideal sheaf explicitly under good conditions, which follows from the proof of the connectedness lemma \cite[Theorem~5.48]{K-M}.

\begin{lemma}\label{lem J=I}
 Let $(X,\Delta)$ be an lc pair. Suppose that $(X,\Delta)$ has a unique non-klt center $Z$ with a unique non-klt place over $Z$. Then 
 $
 \mathcal{J}(X, \Delta) =\mathcal{I}_Z,
 $ 
 where $\mathcal{I}_Z$ is the ideal sheaf of $Z$.
\end{lemma}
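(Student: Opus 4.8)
The plan is to use the log resolution $\pi\colon W\to X$ of $(X,\Delta)$ and track the divisor $F := K_W - \lfloor \pi^*(K_X+\Delta)\rfloor$ whose pushforward is $\mathcal{J}(X,\Delta)$ by definition. First I would write $K_W + \Delta_W = \pi^*(K_X+\Delta)$, so that $F = K_W - \lfloor \Delta_W \rfloor = K_W - \pi^*(K_X+\Delta) + \lceil -\Delta_W + \Delta_W \rceil$... more cleanly, $\lfloor \pi^*(K_X+\Delta)\rfloor = \pi^*(K_X+\Delta) - \{\pi^*(K_X+\Delta)\}$, but the useful bookkeeping is $F = -\lfloor \Delta_W\rfloor + (K_W - \pi^*(K_X+\Delta))$ has no prime component with negative coefficient on a $\pi$-exceptional divisor once we reduce modulo the exceptional locus; I would invoke the structure established in the proof of Proposition~\ref{prop non-klt center}(4): after possibly refining $\pi$, the only component $E_0$ of $\lfloor \Delta_W^+\rfloor$ whose center is $Z$ is the unique non-klt place, and every other component of $\lfloor\Delta_W\rfloor$ with coefficient $\ge 1$ (if any) is $\pi$-exceptional with center strictly contained in $Z$ — but the hypothesis says $Z$ is the \emph{unique} non-klt center, so there are no such components at all except possibly ones mapping onto $Z$, which are ruled out by uniqueness of the non-klt place.

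The key computation is then local near the generic point of $Z$. Over the generic point, $\lfloor \Delta_W\rfloor = E_0$ (the strict transform or the distinguished exceptional divisor cutting out $Z$), and $F = K_W - \pi^*(K_X+\Delta)$ rounded down equals $\lceil -\Delta_W\rceil$ in the appropriate sense; pushing forward, $\pi_*\OO_W(K_W - \lfloor\pi^*(K_X+\Delta)\rfloor) = \pi_*\OO_W(\sum a(E,X,\Delta)E \text{ rounded up, minus } E_0\text{-type corrections})$. The point is that $\pi_*\OO_W(K_W/X - E_0) = \mathcal{I}_Z$ locally: sections are functions vanishing along $E_0$, and since $c_X(E_0) = Z$ this is exactly the ideal sheaf of $Z$ (using that $Z$ is normal by Proposition~\ref{prop non-klt center}(2), and that pushing forward $\OO_W(-E_0)$ along a resolution of the pair $(X,Z)$ recovers $\mathcal{I}_Z$ because $Z$ has rational singularities or, more elementarily, because this is the content of the connectedness-lemma computation). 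I would cite \cite[Theorem~5.48]{K-M} (proof of) for precisely this identification, as the lemma statement already signals.

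The two remaining things to check are: (a) that no \emph{other} point of $X$ contributes to the cosupport, i.e. $\mathcal{J}(X,\Delta)$ is trivial away from $Z$ — this is immediate since the cosupport of $\mathcal{J}(X,\Delta)$ is the union of non-klt centers and $Z$ is the only one; and (b) that the inclusion $\mathcal{J}(X,\Delta)\subset \OO_X$ equals $\mathcal{I}_Z$ globally, not just at the generic point of $Z$ — this follows by combining (a) with the local computation along $Z$ together with the fact that both sheaves are ideal sheaves defining the reduced subscheme $Z$ scheme-theoretically (for $\mathcal{J}$, one uses that $Z$ minimal/unique plus unique non-klt place forces the natural surjection $\OO_W(-E_0)\twoheadrightarrow$ to have the expected saturation, again the connectedness lemma argument).

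The main obstacle is item (b): showing the multiplier ideal is \emph{exactly} $\mathcal{I}_Z$ rather than merely having the same radical or the same cosupport. The subtlety is that $F = K_W - \lfloor\pi^*(K_X+\Delta)\rfloor$ may a priori pick up $\pi$-exceptional divisors over $Z$ with positive coefficients (from terms $\lceil a(E,X,\Delta)\rceil E$ with $a > -1$), which would only shrink the pushforward; one must verify these contribute nothing beyond $\mathcal{I}_Z$, and conversely that the single log-canonical place $E_0$ already forces $\pi_*\OO_W(F)\supseteq \mathcal{I}_Z$. I expect to handle this exactly as in the cited proof of the connectedness lemma: blow up so that $\pi^{-1}(Z)$ is a divisor, observe $\lfloor\Delta_W\rfloor$ has a unique component dominating $Z$ by the unique-non-klt-place hypothesis and no component with coefficient $\ge 1$ whose center is properly inside $Z$ by the unique-non-klt-\emph{center} hypothesis, and then the identity $\pi_*\OO_W(K_W - \lfloor\Delta_W\rfloor) = \mathcal{I}_Z$ drops out of the Grauert–Riemenschneider-type vanishing $R^1\pi_*\OO_W(K_W - \lfloor\Delta_W\rfloor) = 0$ applied to the ideal sequence of the exceptional divisor over $Z$.
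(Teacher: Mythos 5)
Your strategy matches the paper's: log resolution, isolate the unique non-klt place $E$, invoke Kawamata--Viehweg vanishing to control $R^1\pi_*$, and run an ideal-sequence argument along $E$ to pin down the pushforward. But there are a couple of bookkeeping slips and, more importantly, the decisive step is asserted rather than argued.

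On the bookkeeping: writing $K_W+\Delta_W=\pi^*(K_X+\Delta)$ gives $K_W-\lfloor\pi^*(K_X+\Delta)\rfloor=-\lfloor\Delta_W\rfloor$ (since $K_W$ is integral), not $K_W-\lfloor\Delta_W\rfloor$; likewise the identification ``$\pi_*\OO_W(K_{W/X}-E_0)=\mathcal{I}_Z$'' is not the right expression. And positive coefficients of $F$ on $\pi$-exceptional divisors do not ``shrink the pushforward'' --- if anything they could only enlarge it, and by normality they in fact contribute nothing. The real gap is the phrase ``drops out'': you correctly identify the crux (equality, not just equality of radicals), and you name the right SES, but you never say which twist of it to use or why the cokernel is exactly $\OO_Z$. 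The paper does this as follows: write $K_W+E+G=\pi^*(K_X+\Delta)$ with every coefficient of $G$ strictly less than $1$, so that $F=-E-\lfloor G\rfloor$ with $-\lfloor G\rfloor\ge 0$ and $\pi$-exceptional; take the ideal sequence of $E$ twisted by $\OO_W(-\lfloor G\rfloor)$, i.e.\ $0\to\OO_W(-E-\lfloor G\rfloor)\to\OO_W(-\lfloor G\rfloor)\to\OO_E(-\lfloor G\rfloor)\to 0$; push forward using $R^1\pi_*\OO_W(F)=0$ (Kawamata--Viehweg) and the fact that $\pi_*\OO_W(-\lfloor G\rfloor)=\OO_X$ because $-\lfloor G\rfloor$ is effective and $\pi$-exceptional; then factor the resulting surjection $\OO_X\twoheadrightarrow\pi_*\OO_E(-\lfloor G\rfloor)$ through $\OO_X\to\OO_Z\hookrightarrow\pi_*\OO_E\hookrightarrow\pi_*\OO_E(-\lfloor G\rfloor)$, forcing all these maps to be isomorphisms and hence $\mathcal{J}(X,\Delta)=\ker(\OO_X\to\OO_Z)=\mathcal{I}_Z$. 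This chain --- the twist by $\OO_W(-\lfloor G\rfloor)$, the normality identity $\pi_*\OO_W(-\lfloor G\rfloor)=\OO_X$, and the factorization through $\OO_Z$ --- is the content you are missing; without it the conclusion does not simply ``drop out'' of the vanishing.
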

\begin{proof}
Take a log resolution $\pi:W\to X$ of $(X, \Delta)$ and let $E$ be the prime divisor corresponding to the unique non-klt place over $Z$.
By assumption, we may write
\[
K_W+E+G=\pi^*(K_X+\Delta)
\]
where all coefficients of $G$ are less than $1$. 
Then 
\[
K_W-\lfloor\pi^*(K_X+\Delta)\rfloor=-E-\lfloor G\rfloor,
\]
where $-\lfloor G\rfloor\geq 0$ is an effective $\pi$-exceptional divisor. 
By the relative Kawamata--Viehweg vanishing theorem \cite[Theorem~1.2.5]{KMM},
\[R^1\pi_*\OO_{W}(K_W-\lfloor\pi^*(K_X+\Delta)\rfloor)=0.\]
Hence by the short exact sequence 
 \[
 0\to \OO_{W}(K_W-\lfloor\pi^*(K_X+\Delta)\rfloor)\to \OO_W(-\lfloor G\rfloor)\to \OO_E(-\lfloor G\rfloor)\to 0,
 \]
 we have an exact sequence 
\begin{align}\label{eq JGGR}
 0\to \mathcal{J}(X, \Delta)\to \pi_*\OO_W(-\lfloor G\rfloor)\to \pi_*\mathcal{O}_E(-\lfloor G\rfloor)\to 0. 
 \end{align}

As $-\lfloor G\rfloor$ is effective and $\pi$-exceptional, we have 
$\pi_*\OO_W(-\lfloor G\rfloor)=\OO_X$. 
Hence by \eqref{eq JGGR}, we have a surjective map $ \OO_X\to \pi_*\mathcal{O}_E(-\lfloor G\rfloor)$ which is naturally factored as 
\[\OO_X\to \mathcal{O}_Z \hookrightarrow\pi_*\mathcal{O}_E \hookrightarrow\pi_*\mathcal{O}_E(-\lfloor G\rfloor).\] 
So $\mathcal{O}_Z=\pi_*\mathcal{O}_E=\pi_*\mathcal{O}_E(-\lfloor G\rfloor)$. Hence \eqref{eq JGGR} implies that \[\mathcal{J}(X,\Delta)=\textrm{Ker}(\OO_X\to \OO_Z)=\mathcal{I}_Z.\] 
\end{proof}

\subsection{Tankeev's principle}
In the study of geometry of linear systems, the so-called {\it Tankeev's principle} says that for a linear system $|D|$ on a variety $X$ with a fibration structure, the property (e.g., birationality) of $\Phi_{|D|}$ restricting on a general fiber could be lifted to $X$ under certain natural conditions, see \cite{Tank}.

We prove variant versions of Tankeev's principle on the behavior of linear systems.
\begin{proposition}\label{prop tankeev}
Let $X$ be a normal projective variety with a fibration $f: X\to C$ to a curve. 
Let $D$ be a $\mathbb{Q}$-Cartier Weil divisor such that the restriction map 
\begin{equation}\label{equationextF1F2}
 H^0(X, D)\to H^0(F_1, D|_{F_1}) \oplus H^0(F_2, D|_{F_2}) 
\end{equation}
is surjective for distinct general fibers $F_1,F_2$ of $f$.
If $|D|_F|$ induces a generically finite map of degree $d$ on a general fiber $F$ of $f$, then $|D|$ induces a generically finite map of degree $d$ on $X$.
\end{proposition}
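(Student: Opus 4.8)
My plan is to pass to the fibration $f\colon X\to C$ and to exploit, besides the surjectivity onto a single general fiber, the extra information encoded in the surjectivity onto \emph{two} general fibers. First I would fix a resolution $\sigma\colon W\to X$ of the base locus of $|D|$, so that the moving part $M$ of $\sigma^{*}|D|$ is base-point free and $\phi:=\Phi_{|M|}\colon W\to\bP^{N}$ is a morphism with $\overline{\phi(W)}=\overline{\Phi_{|D|}(X)}=:Z$ and $\deg\Phi_{|D|}=\deg(W\to Z)$; set $g:=f\circ\sigma\colon W\to C$. Consider the morphism $\Psi:=(\phi,g)\colon W\to\bP^{N}\times C$ with image $Z':=\overline{\Psi(W)}$, together with the two projections $p\colon Z'\to Z$ and $q\colon Z'\to C$. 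Since $q\circ\Psi=g$, the fibration $W\to C$ factors through $q$. The assertion then reduces to: (i) $W\to Z'$ is generically finite of degree $d$; and (ii) $p$ is birational. Granting these, $\deg\Phi_{|D|}=\deg(W\to Z')\cdot\deg p=d$, and in particular $\Phi_{|D|}$ is generically finite.

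For (i) it suffices to use the surjectivity onto a single fiber (project \eqref{equationextF1F2} to one summand). A general fiber $F$ of $f$ is not contained in $\Bs|D|$, so $\Phi_{|D|}|_{F}$ is defined by the image of $H^{0}(X,D)\to H^{0}(F,D|_{F})$, which by surjectivity is the \emph{complete} system $|D|_{F}|$; hence on the corresponding fiber $\widetilde F$ of $g$ the map $\phi|_{\widetilde F}$ coincides, up to a linear embedding of the target, with $\Phi_{|D|_{F}|}\circ\sigma|_{\widetilde F}$, which by hypothesis is generically finite of degree $d$ onto a $(\dim X-1)$-dimensional image. Since $W\to Z'\to C$ refines $W\to C$, a general fiber of $W\to Z'$ lies in a general fiber $\widetilde F$ of $g$ and is exactly a general fiber of $\phi|_{\widetilde F}$, hence consists of $d$ points; this gives (i), and also $\dim Z'=\dim X$, while $Z\supseteq\overline{\phi(\widetilde F)}$ gives $\dim Z\in\{\dim X-1,\dim X\}$.

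The two-fiber surjectivity enters through a separation statement. For general fibers $F_{1}\neq F_{2}$, put $K_{i}=\ker\bigl(H^{0}(X,D)\to H^{0}(F_{i},D|_{F_{i}})\bigr)$; surjectivity of $H^{0}(X,D)\to H^{0}(F_{1},D|_{F_{1}})\oplus H^{0}(F_{2},D|_{F_{2}})$ is equivalent to $K_{1}+K_{2}=H^{0}(X,D)$, which says precisely that the linear spans of $\overline{\Phi_{|D|}(F_{1})}$ and $\overline{\Phi_{|D|}(F_{2})}$ in $\bP^{N}$ are disjoint; in particular $\overline{\Phi_{|D|}(F_{1})}\cap\overline{\Phi_{|D|}(F_{2})}=\emptyset$, so $\Phi_{|D|}$ sends general points of distinct general fibers to distinct points. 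If $\dim Z=\dim X-1$, then $\overline{\phi(\widetilde F)}=Z$ for every general fiber, and a general point of $F_{1}$ would have the same image as a general point of $F_{2}$, a contradiction; hence $\dim Z=\dim X$, so $\Phi_{|D|}$ is generically finite onto $Z$, and $p\colon Z'\to Z$ is generically finite of some degree $e\ge 1$.

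Finally, for (ii), suppose $e\ge 2$ and examine the non-diagonal part of $Z'\times_{Z}Z'$: a component $R$ dominating $Z$ has dimension $\dim X$, and over a general point its members are triples $((z,c),(z,c'))$ with $c\neq c'$ and $z\in\overline{\phi(\widetilde F_{c})}\cap\overline{\phi(\widetilde F_{c'})}$. Thus the image $B$ of $R$ under $Z'\times_{Z}Z'\to C\times C$ is of dimension $\ge 1$ and parametrizes pairs $(c,c')$ with $\overline{\Phi_{|D|}(F_{c})}\cap\overline{\Phi_{|D|}(F_{c'})}\neq\emptyset$. If some component of $B$ surjects onto $C$ under both projections, a general point of it is a pair of two general fibers, contradicting the span-disjointness above. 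Otherwise a component of $R$ of dimension $\dim X$ maps onto a one-dimensional $B_{1}\subseteq\{c_{0}\}\times C$ (or $C\times\{c_{0}\}$) with general fiber of dimension $\dim X-1$; since such a fiber is isomorphic to $\overline{\phi(\widetilde F_{c})}\cap\overline{\phi(\widetilde F_{c_{0}})}$, we get $\overline{\Phi_{|D|}(F_{c})}=\overline{\Phi_{|D|}(F_{c_{0}})}$ for all general $c$, forcing $\dim Z=\dim X-1$ and contradicting the previous step. Hence $e=1$ and $\deg\Phi_{|D|}=d$. The one genuinely delicate step is this last paragraph: one must make sure that a failure of birationality of $p$ yields \emph{two general} fibers (not merely two special ones) with overlapping images, so that the span-disjointness applies; the remaining ingredients — the resolution, the complete-restricted-system computation, and the elementary identity $K_{1}+K_{2}=H^{0}(X,D)$ — are routine.
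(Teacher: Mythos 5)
Your proof is correct, though considerably more elaborate than the paper's. Both turn on the same key observation: the surjectivity onto $F_1\sqcup F_2$ forces $\overline{\Phi_{|D|}(F_1)}$ and $\overline{\Phi_{|D|}(F_2)}$ to be disjoint for distinct general fibers (you prove the stronger fact that even the linear spans are disjoint via $K_1+K_2=H^0(X,D)$; the paper phrases it as ``$\Phi_{|D|}$ separates any two closed points on two distinct general fibers'', already having replaced $D$ by its free movable part). The paper then concludes immediately: a general fiber of $\Phi_{|D|}$ must therefore lie inside a single general fiber $F$ of $f$, where the single-fiber surjectivity identifies $\Phi_{|D|}|_F$ with the complete system $\Phi_{|D|_F|}$, of degree $d$. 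You instead build the auxiliary $Z'=\overline{\Psi(W)}\subset\bP^N\times C$, prove $\deg(W\to Z')=d$, and establish birationality of $p\colon Z'\to Z$ via a fiber-product argument in $Z'\times_Z Z'$. The ``delicate step'' you flag does close: if the image $B\subset C\times C$ of a non-diagonal dominating component fails to dominate both factors, then (being irreducible of dimension at least $1$) it is contained in a slice $\{c_0\}\times C$ or $C\times\{c_0\}$, and your dimension count gives $\overline{\phi(\widetilde F_c)}\subset\overline{\phi(\widetilde F_{c_0})}$ for general $c$ (the containment, not equality, already suffices), whence $\dim Z\leq\dim X-1$, contradicting the earlier step. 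So the argument is complete, but the $Z'\times_Z Z'$ machinery is more than the problem requires: once images of distinct general fibers are disjoint and $D$ is free, the degree-$d$ conclusion is immediate from the restriction to a single fiber.
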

\begin{proof}
 After replacing $X$ by a higher resolution 
 and replacing $D$ by its movable part, we may assume that $X$ is smooth and $D$ is free.
In particular, $D|_F$ is free on a general fiber $F$ of $f$. 
Then the surjectivity of \eqref{equationextF1F2} implies that $\Phi_{|D|}$ separates any $2$ closed points on $2$ distinct general fibers of $f$, hence $\Phi_{|D|}$ induces a generically finite map of degree $d$ on the complement of finitely many fibers of $f$.
\end{proof}

\begin{lemma}\label{lem separete Z1Z2}
Let $X$ be a normal projective variety and let $D$ be an effective Cartier divisor on $X$.
Let $C_1$ and $C_2$ be closed subvarieties on $X$.
Suppose that the natural restriction map 
\[
 H^0(X,D) \to H^0(C_1, D|_{C_1} ) \oplus H^0(C_2, D|_{C_2})
\]
is surjective.
Then \[
\dim\overline{\Phi_{{|D|}}(X)}\geq \min\{\dim\overline{\Phi_{{|D|_{C_1}|}}(C_1)}, \dim\overline{\Phi_{{|D|_{C_2}|}}(C_2)}\}+1.\]
In particular,
if $h^0(C_i, D|_{C_i})\geq 2$ for $i=1,2$, then $\dim\overline{\Phi_{{|D|}}(X)}\geq 2$. 
\end{lemma}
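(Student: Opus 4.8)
The plan is to reformulate the surjectivity hypothesis as the disjointness of two linear subspaces of the target of $\Phi_{|D|}$, and then to play this off against the irreducibility of $X$. First I would set up notation: let $V:=H^0(X,D)$, and for $i=1,2$ let $V_i:=H^0(C_i,D|_{C_i})$, let $r_i\colon V\to V_i$ be the restriction map, and $W_i:=\ker r_i$. The hypothesis says $(r_1,r_2)\colon V\to V_1\oplus V_2$ is surjective; from this I would read off two elementary facts: each $r_i$ is surjective, and $W_1+W_2=V$ (given $v\in V$, choose $u\in V$ with $(r_1,r_2)(u)=(r_1(v),0)$; then $u\in W_2$ and $v-u\in W_1$). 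If $V_i=0$ for some $i$ there is nothing to prove, since then the right-hand side is $\le 0$ while $\dim\overline{\Phi_{|D|}(X)}\ge 0$ as $D$ is effective; so assume $V_1,V_2\neq 0$. Then each $C_i$ carries a section of $D$ not vanishing identically on it, hence $C_i\not\subseteq\Bs|D|$, and $\Phi_{|D|}$ restricts to an honest rational map on $C_i$.

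Next, write the target of $\Phi_{|D|}$ as $\mathbb{P}(V^{\vee})$. For each $i$ the sections lying in $W_i$ cut out a linear subspace $L_i=\mathbb{P}(W_i^{\perp})\subseteq\mathbb{P}(V^{\vee})$, and since every section in $W_i$ vanishes on $C_i$ we have $\overline{\Phi_{|D|}(C_i)}\subseteq L_i$. Moreover, surjectivity of $r_i$ means that the subsystem of $|D|_{C_i}|$ restricted from $|D|$ is the complete system $|D|_{C_i}|$, so $\Phi_{|D|}|_{C_i}$ is $\Phi_{|D|_{C_i}|}$ followed by the linear isomorphism of $\mathbb{P}(V_i^{\vee})$ onto $L_i$; in particular $\dim\overline{\Phi_{|D|}(C_i)}=\dim\overline{\Phi_{|D|_{C_i}|}(C_i)}=:d_i$. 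Finally, $W_1+W_2=V$ is equivalent to $W_1^{\perp}\cap W_2^{\perp}=0$, that is, $L_1\cap L_2=\emptyset$.

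Now set $Y:=\overline{\Phi_{|D|}(X)}$, which is irreducible because $X$ is. It contains the nonempty sets $\overline{\Phi_{|D|}(C_1)}\subseteq L_1$ and $\overline{\Phi_{|D|}(C_2)}\subseteq L_2$; since $L_1\cap L_2=\emptyset$, the irreducible $Y$ is not contained in $L_1$, so $Y\cap L_1$ is a proper closed subset of $Y$ and hence $\dim(Y\cap L_1)\le\dim Y-1$. As $\overline{\Phi_{|D|}(C_1)}\subseteq Y\cap L_1$, this gives $d_1\le\dim Y-1$, and symmetrically $d_2\le\dim Y-1$, whence $\dim Y\ge\max\{d_1,d_2\}+1\ge\min\{d_1,d_2\}+1$, which is the asserted inequality. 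For the final statement, if $h^0(C_i,D|_{C_i})\ge 2$ then the sections defining $\Phi_{|D|_{C_i}|}$ are linearly independent, so its image spans $\mathbb{P}(V_i^{\vee})$, which has dimension $\ge 1$; being irreducible, the image then has dimension $d_i\ge 1$, and the inequality just proved yields $\dim\overline{\Phi_{|D|}(X)}\ge 2$.

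The one genuinely load-bearing step — and the only place where the hypothesis is used — is the translation ``restriction surjective $\Rightarrow L_1\cap L_2=\emptyset$'' together with the observation that this disjointness, combined with the irreducibility of $Y$, upgrades the trivial bound $\dim Y\ge\max\{d_1,d_2\}$ (coming merely from $Y\supseteq\overline{\Phi_{|D|}(C_1)}\cup\overline{\Phi_{|D|}(C_2)}$) to $\dim Y\ge\max\{d_1,d_2\}+1$. Everything else is bookkeeping; the only mild care needed is to ensure $C_i\not\subseteq\Bs|D|$ so that $\Phi_{|D|}|_{C_i}$ and its factorization through $\Phi_{|D|_{C_i}|}$ make sense, which is automatic once $r_i$ is onto a nonzero space.
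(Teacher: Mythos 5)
Your proof is correct and follows essentially the same route as the paper's (very terse) one: surjectivity of the restriction map forces $\Phi_{|D|}|_{C_i}$ to coincide, up to a linear embedding, with $\Phi_{|D|_{C_i}|}$, and simultaneously separates the two images inside $\overline{\Phi_{|D|}(X)}$. Your version is a bit sharper — by observing $L_1\cap L_2=\emptyset$ you in fact prove $\dim\overline{\Phi_{|D|}(X)}\geq\max\{d_1,d_2\}+1$, not merely the stated $\min$ bound — but the underlying idea matches the paper's.
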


\begin{proof}
 By assumption, $\dim\overline{\Phi_{{|D|}}(C_i)}=\dim\overline{\Phi_{{|D|_{C_i}|}}(C_i)}$ for $i=1,2$ and $\overline{\Phi_{{|D|}}(C_1)}\neq \overline{\Phi_{{|D|}}(C_2)}$, hence the conclusion follows. 
\end{proof}
 
We need the following lemma to check the surjectivity of restriction maps, which is an easy consequence of the five lemma. 
\begin{lemma}\label{lem rest 5 lemma}
Let $X$ be a normal projective variety and let $D$ be a Cartier divisor on $X$.
Let $C_1$ and $C_2$ be closed subvarieties on $X$.
Suppose that the natural restriction maps \[H^0(X, D) \to H^0(C_1, D|_{C_1}) \text{ and } H^0(X, \mathcal{O}_X(D)\otimes \mathcal{I}_{C_1}) \to H^0(C_2, D|_{C_2})\]
are surjective. 
Then the natural restriction map
\[
 H^0(X, D) \to H^0(C_1, D|_{C_1} ) \oplus H^0(C_2, D|_{C_2})
\]
is surjective.
\end{lemma}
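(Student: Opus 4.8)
The plan is to deduce the surjectivity of the combined restriction map from the two given surjectivities by a diagram chase, exactly as the statement hints. First I would set up the commutative diagram of short exact sequences of sheaves on $X$. The inclusion $\mathcal{O}_X(D)\otimes\mathcal{I}_{C_1}\hookrightarrow\mathcal{O}_X(D)$ has cokernel supported on $C_1$; since $D$ is Cartier this cokernel is identified with $\mathcal{O}_{C_1}(D|_{C_1})$, giving
\[
0\to \mathcal{O}_X(D)\otimes\mathcal{I}_{C_1}\to \mathcal{O}_X(D)\to \mathcal{O}_{C_1}(D|_{C_1})\to 0.
\]
Taking global sections yields an exact sequence
\[
H^0(X,\mathcal{O}_X(D)\otimes\mathcal{I}_{C_1})\to H^0(X,D)\xrightarrow{\ r_1\ } H^0(C_1,D|_{C_1}),
\]
and by hypothesis $r_1$ is surjective.

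Next I would bring in $C_2$. Composing the first map above with the restriction to $C_2$ produces a map $H^0(X,\mathcal{O}_X(D)\otimes\mathcal{I}_{C_1})\to H^0(C_2,D|_{C_2})$, which the second hypothesis assumes is surjective. Now take an arbitrary element $(s_1,s_2)\in H^0(C_1,D|_{C_1})\oplus H^0(C_2,D|_{C_2})$. Using surjectivity of $r_1$, lift $s_1$ to some $t\in H^0(X,D)$; then $t|_{C_2}$ need not equal $s_2$, but $s_2-t|_{C_2}\in H^0(C_2,D|_{C_2})$, so by the second surjectivity there is $u\in H^0(X,\mathcal{O}_X(D)\otimes\mathcal{I}_{C_1})$ mapping to $s_2-t|_{C_2}$. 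Since $u$ vanishes along $C_1$, its image in $H^0(C_1,D|_{C_1})$ is $0$, and its image in $H^0(C_2,D|_{C_2})$ is $s_2-t|_{C_2}$ by construction. Therefore $t+u\in H^0(X,D)$ restricts to $(s_1,0)+(0,s_2-t|_{C_2})=(s_1,s_2)$, proving the combined map is surjective.

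I would phrase this either as the explicit element-chase just described, or equivalently invoke the five lemma (as the paper suggests) applied to the map of two short exact sequences: the top row $0\to H^0(X,\mathcal{O}_X(D)\otimes\mathcal{I}_{C_1})\to H^0(X,D)\to H^0(C_1,D|_{C_1})\to 0$ (exact on the right by the first hypothesis) mapping down to the bottom row $0\to H^0(C_2,D|_{C_2})\to H^0(C_1,D|_{C_1})\oplus H^0(C_2,D|_{C_2})\to H^0(C_1,D|_{C_1})\to 0$, where the leftmost vertical arrow is surjective by the second hypothesis, the rightmost is the identity, and one concludes the middle arrow is surjective. There is essentially no obstacle here; the only point requiring minor care is the sheaf-theoretic identification of the cokernel of $\mathcal{O}_X(D)\otimes\mathcal{I}_{C_1}\hookrightarrow\mathcal{O}_X(D)$ with $\mathcal{O}_{C_1}(D|_{C_1})$, which is immediate because $D$ is Cartier so tensoring the sequence $0\to\mathcal{I}_{C_1}\to\mathcal{O}_X\to\mathcal{O}_{C_1}\to 0$ with the line bundle $\mathcal{O}_X(D)$ stays exact.
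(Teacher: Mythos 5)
Your proof is correct and takes essentially the same approach as the paper: both set up the two short exact sequences (the top row coming from the ideal sheaf sequence for $C_1$, exact on the right by the first hypothesis, mapping to the split bottom row) and conclude by the five lemma, which you also usefully unpack as an explicit element chase.
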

\begin{proof} 
We just apply the five lemma to the commutative diagram:
{ \[\begin{tikzcd}[column sep=small]
	0 & H^0(X, \OO_X(D)\otimes \mathcal{I}_{C_1}) & H^0(X, D) & H^0(C_1, D|_{C_1}) & 0 \\
	0 & H^0(C_2, D|_{C_2}) & H^0(C_1,D|_{C_1}) \oplus H^0(C_2, D|_{C_2}) & H^0(C_1,D|_{C_1}) & 0.
	\arrow[from=1-1, to=1-2]
	\arrow[from=1-2, to=1-3]
	\arrow[from=1-2, to=2-2]
	\arrow[from=1-3, to=1-4]
	\arrow[from=1-3, to=2-3]
	\arrow[from=1-4, to=1-5]
	\arrow[from=1-4, to=2-4]
	\arrow[from=2-1, to=2-2]
	\arrow[from=2-2, to=2-3]
	\arrow[from=2-3, to=2-4]
	\arrow[from=2-4, to=2-5]
\end{tikzcd}\]}
The proof is complete.
\end{proof}

\section{Glct for minimal surfaces of general type}
We recall the definition of global log canonical threshold (glct) and a result of Koll\'ar in \cite{Noether, CJY24} which gives explicit lower bounds for glct of minimal surfaces of general type.

\begin{definition}
Let $X$ be a normal variety with lc singularities and let $\Delta\geq 0$ be an effective $\bQ$-Cartier $\bQ$-divisor. The {\it log canonical threshold} ({\it lct}, for short) of $\Delta$ with respect to $X$ is defined by
$$\lct(X; \Delta) = \sup\{t\geq 0 \mid (X, t\Delta) \text{ is lc}\}.$$ 
\end{definition}
\begin{definition}[{\cite[Appendix~A]{Noether}}]\label{def glct}
Let $Y$ be a normal projective variety with klt singularities such that $K_Y$ is nef and big. The {\it global log canonical threshold} ({\it glct}, for short) of $Y$ is defined as: 
\begin{align*}
\glct(Y){}&:=\inf\{\lct(Y; \Delta)\mid 0\leq \Delta\sim_\bQ K_Y\}\\
{}&=\sup\{t\geq 0\mid (Y, t\Delta) \text{ is lc for all }0\leq \Delta\sim_\bQ K_Y\}.
\end{align*}
\end{definition}
\begin{remark}\label{rem glct}
 In Definition~\ref{def glct}, we may replace $\mathbb{Q}$-linear equivalence by numerical equivalence, that is, \begin{align}\glct(Y){}=\inf\{\lct(Y; \Delta)\mid 0\leq \Delta\equiv K_Y \text{ and } \Delta \text{ is a }\mathbb{Q}\text{-divisor}\}.\label{eq glct equiv}\end{align}
This is well-known to experts, but we give a proof for the reader's convenience. Denote by $t$ the right-hand side of \eqref{eq glct equiv}, then it is clear that $\glct(Y){}\geq t$. 
So it suffices to show that $\glct(Y){}\leq t$, which is equivalent to showing that $\glct(Y)\leq \lct(Y; \Delta)$ for
 any $\mathbb{Q}$-divisor $\Delta$ with $0\leq \Delta\equiv K_Y$. As $K_Y$ is big, for any rational number $ \epsilon>0$, we can find an effective $\mathbb{Q}$-divisor $D_\epsilon\sim_{\mathbb{Q}} \epsilon K_Y+K_Y-\Delta$, that is, $ \frac{1}{1+\epsilon}(D_\epsilon+\Delta)\sim_{\mathbb{Q}} K_Y$. Hence 
\begin{align*}
\glct(Y)\leq {}&\lct \left(Y; \frac{1}{1+\epsilon}(\Delta+D_\epsilon) \right)\\={}&(1+\epsilon)\lct(Y; \Delta+D_\epsilon)\leq (1+\epsilon)\lct(Y; \Delta).
\end{align*}
As $\epsilon$ is arbitrary, we conclude that $\glct(Y)\leq \lct(Y; \Delta)$.
\end{remark}

Now we recall Koll\'ar's lower bound for glct.
\begin{proposition}[{Koll\'ar's lower bound}] \label{prop glct lower bound}
Let $S$ be a minimal surface of general type. Then $\glct(S)\geq \frac{c_0}{K_S^2}$ where $c_0$ is listed below according to $P_2:=h^0(S, 2K_S)$:
$$
\begin{array}{rccccccccc}
P_2= & 2 & 3 & 4 & 5 & 6 & 7{-}9 & 10{-}13 & \geq 14\\[1ex]
c_0= & \tfrac1{11} & \tfrac1{13} & \tfrac1{16} &\tfrac1{17} & \tfrac1{19} & \tfrac1{P_2+14} & \tfrac1{P_2+15} & \tfrac1{2P_2+1}
\end{array}
$$
\end{proposition}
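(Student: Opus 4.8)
The claim is that $\lct(S;\Delta)\ge c_0/K_S^2$ for every effective $\Delta\sim_{\bQ}K_S$, where $c_0=c_0(P_2)$ is as tabulated. Two remarks organize the argument. First, by Riemann--Roch together with the vanishings $h^1(S,2K_S)=h^2(S,2K_S)=0$ one has $P_2=\chi(\mathcal{O}_S)+K_S^2$, and since $\chi(\mathcal{O}_S)\ge 1$ for a surface of general type, $K_S^2\le P_2-1$; this inequality (sharpened by Noether's inequality $K_S^2\ge 2p_g(S)-4$ when $p_g(S)$ is relevant) is the only way $P_2$ will enter. Second, $\Delta$ is automatically nef: it is numerically equivalent to $K_S$, and nefness is a numerical condition; in particular $\Delta\cdot C=K_S\cdot C\ge 0$ for every curve $C$. (Equivalently one may pass to the canonical model $S_{\mathrm{can}}$ by crepant invariance of lct, making $K_S$ ample and Cartier; I run the argument on the smooth minimal $S$.) Now suppose for contradiction that $c:=\lct(S;\Delta)<c_0/K_S^2$. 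Since $c_0\le \tfrac1{11}$ and $K_S^2\ge 1$ we get $c<1$, so $(S,c\Delta)$ is lc with a non-klt center that is a proper subvariety; fix a minimal one, $W$. As $S$ is a surface, $W$ is either a curve (a component of $\lfloor c\Delta\rfloor$) or a point.

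\textbf{The easy case: a curve $C$ with $K_S\cdot C>0$.} If some component $C$ of $\lfloor c\Delta\rfloor$ has $K_S\cdot C>0$, then $\coeff_C\Delta\ge 1/c$, so
\[
K_S^2=K_S\cdot\Delta\ \ge\ \tfrac1c\,K_S\cdot C\ \ge\ \tfrac1c,
\]
hence $c\ge 1/K_S^2>c_0/K_S^2$, a contradiction. So from now on every component of $\lfloor c\Delta\rfloor$ is a $(-2)$-curve (a smooth rational curve $C$ with $K_S\cdot C=0$, i.e.\ $C^2=-2$), and either $W$ is such a $(-2)$-curve or $\lfloor c\Delta\rfloor=0$ near $W$ and $W$ is a point. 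On $S_{\mathrm{can}}$ these are precisely the cases where $W$ is a (possibly Du Val singular) point.

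\textbf{The hard cases: a $(-2)$-curve or a point.} In both cases $c$ being small forces $\Delta$ to be very singular: along the maximal connected configuration $\Gamma=\sum C_i$ of $(-2)$-curves through $W$ (an $A$--$D$--$E$ chain) one uses nefness of $\Delta$ against the negative-definite lattice $(C_i\cdot C_j)$ together with $\Delta\cdot K_S=K_S^2$ and the contraction $S\to S_{\mathrm{can}}$ to bound the coefficients $\coeff_{C_i}\Delta\ge 1/c$ and the local intersection data from below; for a point $W=p$ one bounds $\mult_p\Delta$ (and finer invariants of the non-klt place, read off from a chain of weighted blow-ups) from below by quantities of size $\gtrsim 1/c$. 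In either situation this forces the multiplier ideal $\mathcal{J}(S,c\Delta)$ to have large colength along $\Gamma$ (resp.\ at $p$). Now apply the Nadel vanishing theorem with $D=2K_S$: since $2K_S-K_S-c\Delta\equiv(1-c)K_S$ is nef and big, $H^1(S,\mathcal{O}_S(2K_S)\otimes\mathcal{J}(S,c\Delta))=0$, so
\[
H^0(S,2K_S)\ \twoheadrightarrow\ H^0\bigl(S,\mathcal{O}_S(2K_S)\otimes(\mathcal{O}_S/\mathcal{J}(S,c\Delta))\bigr),
\]
and the right-hand side has dimension at least the colength of $\mathcal{J}(S,c\Delta)$ on the cosupport. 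Hence $P_2\ge(\text{that colength})$, which together with $K_S^2\le P_2-1$ (and, when $P_2$ is small, the extra information that $|K_S|$ separates the corresponding infinitesimal data, plus Noether's inequality) yields $c\,K_S^2\ge c_0$, the desired contradiction.

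\textbf{Main obstacle.} The case-free parts---reducing to $c<1$, nefness of $\Delta$, the curve case with $K_S\cdot C>0$, and the Nadel-vanishing set-up---are routine. All the content lies in the last step: carrying out the local analysis at $p$ (and the $A$--$D$--$E$ combinatorics along $\Gamma$) precisely enough to convert ``$\Delta$ very singular'' into the \emph{exact} colength bound, and then optimizing over all $A$--$D$--$E$ types and over the pairs $(K_S^2,P_2,p_g(S))$ allowed by Riemann--Roch and Noether's inequality to recover the tabulated values of $c_0$. This sharp optimization is exactly Koll\'ar's computation in \cite{Noether}, used also in \cite{CJY24}.
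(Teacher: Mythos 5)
Your general strategy is the right one and it does match the paper's: work with $\Delta\equiv K_S$ nef, dispose of non-klt curve centers with $K_S\cdot C>0$ by intersecting with $K_S$, and then use Nadel vanishing with $D=2K_S$ to cap some measure of the singularity of $(S,c\Delta)$ by $P_2$, followed by a combinatorial optimization. But the entire quantitative content of the proposition sits in the two steps you leave as a black box: ``this forces the multiplier ideal to have large colength \ldots [which] yields $c\,K_S^2\ge c_0$'' is essentially a restatement of the conclusion, not a derivation of it. The paper's proof is also citation-based, but it is precise about the intermediate object: it passes to the canonical model $S_c$, introduces the quantity $\operatorname{mcd}$ from \cite[(A.1.1)]{Noether}, cites \cite[Proposition~3.1]{CJY24} (applied to $H=K_{S_c}$) for the Nadel-vanishing inequality $\operatorname{mcd}\bigl(\lct(S_c;\tfrac1{K_{S_c}^2}\Delta)\bigr)\le P_2$, and then cites \cite[Proposition~A.4]{Noether} for the optimization $\operatorname{mcd}(t)\le P_2\Rightarrow t\ge c_0$. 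A correct proof must at least locate and invoke these two results (or reprove them).

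There is also a concrete reason your heuristic ``colength $\gtrsim 1/c$, hence $P_2\gtrsim 1/c$'' is not the right invariant as stated. You elect to stay on the smooth minimal $S$, where the minimal non-klt center can be a $(-2)$-curve $W$. But $2K_S|_W$ has degree $2K_S\cdot W=0$, so $h^0(W,2K_S|_W)=1$ regardless of how small $c$ is, and the naive restriction bound via Nadel vanishing gives no information at all in this case. The precise invariant $\operatorname{mcd}$ used in \cite{Noether,CJY24} is designed to capture the finer structure (the full length of $\mathcal O_S/\mathcal J(S,c\Delta)$ along the Du Val configuration, read off on the canonical model), which is exactly what your sketch does not track. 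This is where the proposal has a genuine gap rather than a mere omission of routine detail.
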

\begin{proof}
Denote by $S_c$ the canonical model of $S$, then 
$h^0(S_c, 2K_{S_c})=h^0(S, 2K_{S})=P_2$.
Applying \cite[Proposition~3.1]{CJY24} to $H=K_{S_c}$, we have
\[
\text{mcd}\left(\lct\left(S_c;\frac{1}{K_{S_c}^2}\Delta \right)\right)\leq P_2\]
for any effective $\mathbb{Q}$-divisor $\Delta\equiv K_{S_c}$, where $\text{mcd}$ is defined in \cite[(A.1.1)]{Noether}.
So by \cite[Proposition~A.4]{Noether}, we get
\[
\lct\left(S_c;\frac{1}{K_{S_c}^2}\Delta \right)\geq c_0,
\]
or equivalently, 
\[\lct(S_c; \Delta )=\frac{1}{K_{S_c}^2}\lct\left(S_c;\frac{1}{K_{S_c}^2}\Delta \right) \geq \frac{c_0}{K_S^2}\]
for any effective $\mathbb{Q}$-divisor $\Delta\equiv K_{S_c}$.
Hence $ 
\glct(S)=\glct(S_c)\geq \frac{c_0}{K_S^2}.$
\end{proof}

As applications, we have the following $2$ lemmas. 
\begin{lemma}\label{429}
Let $S$ be a minimal surface of general type with $K_S^2\leq 11$. Then $\glct(S)\geq \frac{1}{429}$.
\end{lemma}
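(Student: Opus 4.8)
The plan is to combine Kollár's lower bound (Proposition~\ref{prop glct lower bound}) with the Noether-type inequality relating $P_2$ and $K_S^2$ for minimal surfaces of general type. Recall that for a minimal surface $S$ of general type one has $\chi(\OO_S)\geq 1$ and, by Riemann--Roch, $P_2 = h^0(S,2K_S) = \chi(\OO_S) + K_S^2$ (since $H^1(S,2K_S)=H^2(S,2K_S)=0$ by Kawamata--Viehweg / Ramanujam vanishing for $2K_S-K_S=K_S$ big and nef); hence $P_2\geq K_S^2+1$. So in each case I would feed the available value of $K_S^2$ together with the corresponding lower bound on $P_2$ into the table in Proposition~\ref{prop glct lower bound} and check that the resulting $c_0/K_S^2$ is at least $1/429$.

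The key steps, in order: (i) record $P_2\geq K_S^2+1$ from Riemann--Roch as above; (ii) split into the ranges $K_S^2=1,\dots,11$; (iii) for small $K_S^2$ where $P_2$ could be $2,\dots,6$, use the explicit constants $c_0\in\{\frac1{11},\frac1{13},\frac1{16},\frac1{17},\frac1{19}\}$ and note $\frac{c_0}{K_S^2}\geq \frac{1}{19\cdot 11}=\frac{1}{209}\geq\frac1{429}$ already; (iv) for the regime $P_2\geq 7$, use that $c_0\geq \frac{1}{2P_2+1}$ dominates the other entries (indeed $\frac1{P_2+14}\geq\frac1{2P_2+1}$ for $P_2\geq 13$, and for $7\le P_2\le 13$ one checks $\frac1{P_2+15}\geq\frac1{2P_2+1}$ fails only below $P_2=14$, so I would just use the uniform bound $c_0\geq\frac1{P_2+15}$ valid for all $P_2\geq 7$); then since $P_2\leq\chi(\OO_S)+K_S^2$ and $\chi(\OO_S)\leq$ a bound coming from $K_S^2\leq 11$ via $9\chi(\OO_S)\leq K_S^2 + \text{(correction)}$... rather, more simply, $P_2$ itself is bounded: combining $P_2=\chi+K_S^2$ with $\chi\geq 1$ and the Bogomolov--Miyaoka--Yau inequality $K_S^2\leq 9\chi$ gives $\chi\leq K_S^2$ so $P_2\leq 2K_S^2$, hence $\frac{c_0}{K_S^2}\geq \frac{1}{K_S^2(P_2+15)}\geq \frac{1}{K_S^2(2K_S^2+15)}\geq \frac{1}{11\cdot 37}=\frac1{407}\geq\frac1{429}$; (v) do the corresponding check in the ranges where $c_0=\frac1{P_2+14}$ or $\frac1{2P_2+1}$ and confirm the worst case is still $\geq\frac1{429}$, and finally verify that $K_S^2=11$ with the maximal admissible $P_2$ is indeed the extremal case pinning down the constant $429$.

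The main obstacle is bookkeeping rather than conceptual: one must be careful that the BMY bound $K_S^2\leq 9\chi$ goes the "wrong way" to bound $\chi$ from above, so instead I would bound $P_2$ directly using $P_2=\chi+K_S^2$ and Noether's inequality $K_S^2\geq 2p_g-4$ is not what's needed either; the clean input is simply that $p_g(S)\leq \frac12 K_S^2 + 2$ (Noether) and $\chi = 1 - q + p_g \le 1 + p_g$, giving $P_2 = \chi + K_S^2 \le K_S^2 + p_g + 1 \le \tfrac32 K_S^2 + 3$, so with $K_S^2\leq 11$ we get $P_2\leq 19$, landing in the $\geq 14$ row with $c_0\geq\frac1{2\cdot 19+1}=\frac1{39}$, whence $\frac{c_0}{K_S^2}\geq\frac1{11\cdot 39}=\frac1{429}$. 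Tracking which row of the table is active for each $(K_S^2,P_2)$ and confirming that $K_S^2=11$, $P_2=19$ is the unique worst case is the only real work; all other cases give a strictly larger lower bound, so the stated constant $\frac1{429}$ follows.
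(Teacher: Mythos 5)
Your final argument is essentially the paper's proof: bound $p_g(S)\leq 7$ via Noether's inequality, use Kawamata--Viehweg vanishing and Riemann--Roch to get $P_2=K_S^2+1-q+p_g\leq 19$, then read off $c_0\geq\frac{1}{39}$ from Proposition~\ref{prop glct lower bound} to obtain $\glct(S)\geq\frac{1}{11\cdot 39}=\frac{1}{429}$. One warning about the discarded detour: your parenthetical that Bogomolov--Miyaoka--Yau ``$K_S^2\leq 9\chi$ gives $\chi\leq K_S^2$'' is backwards (BMY only gives $\chi\geq K_S^2/9$, so it bounds $\chi$ from below, not above), but since you abandon this route in favor of the Noether bound, the completed argument is correct and matches the paper.
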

\begin{proof}
By the Noether inequality, we have $p_g(S)\leq \lfloor\frac{1}{2}K_S^2+2\rfloor\leq 7$. 
By the Kawamata--Viehweg vanishing theorem and the Riemann--Roch formula, we have 
\[
 h^0(S, 2K_{S})=\chi (S, 2K_{S})=K_{S}^2+ 1-q(S)+p_g(S)\leq 19.
\]
Then by Proposition~\ref{prop glct lower bound}, $\glct(S)\geq \frac{1}{11\times 39}=\frac{1}{429}$.
\end{proof} 
\begin{lemma}\label{lemma 2,3}
 Let $S$ be a minimal $(2,3)$-surface. Then $\glct (S)\geq \frac{1}{38}$.
\end{lemma}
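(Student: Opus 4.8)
The plan is to apply Koll\'ar's lower bound, Proposition~\ref{prop glct lower bound}, whose only input is a bound on $P_2:=h^0(S,2K_S)$. Since $S$ is a minimal $(2,3)$-surface we have $K_S^2=\Vol(S)=2$ and $p_g(S)=3$, and $S$ is in particular a minimal surface of general type, so the proposition applies.

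First I would estimate $P_2$. By the Kawamata--Viehweg vanishing theorem $h^i(S,2K_S)=0$ for $i>0$, so by the Riemann--Roch formula
\[
P_2=\chi(S,2K_S)=\chi(\OO_S)+K_S^2=K_S^2+1-q(S)+p_g(S)=6-q(S).
\]
Since $q(S)\geq 0$, and since $P_2\geq 2$ for every surface of general type (Riemann--Roch, as recalled in the introduction), this gives $2\leq P_2\leq 6$. (In fact $q(S)=0$, as an irregular minimal surface of general type satisfies $K_S^2\geq 2p_g(S)$, which fails here, so $P_2=6$; but the cruder bound $P_2\leq 6$ already suffices.)

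Finally, I would read off $c_0$ from the table in Proposition~\ref{prop glct lower bound}: for $P_2\in\{2,3,4,5,6\}$ it equals $\tfrac1{11},\tfrac1{13},\tfrac1{16},\tfrac1{17},\tfrac1{19}$ respectively, and each of these is $\geq\tfrac1{19}$. Hence
\[
\glct(S)\geq\frac{c_0}{K_S^2}\geq\frac{1}{19\cdot 2}=\frac{1}{38}.
\]

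There is essentially no obstacle here: the argument is a short computation in the spirit of Lemma~\ref{429}. The only points to be mildly careful about are to use the inequality $P_2\leq 6$ (equivalently $q(S)\geq 0$) rather than attempting to determine $q(S)$ exactly, and to note $P_2\geq 2$ so that the table in Proposition~\ref{prop glct lower bound} is valid.
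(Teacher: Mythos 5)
Your proposal is correct and follows essentially the same route as the paper: Riemann--Roch with Kawamata--Viehweg vanishing to control $P_2$, then Koll\'ar's lower bound (Proposition~\ref{prop glct lower bound}) divided by $K_S^2=2$. Your version is slightly more careful, since the paper asserts $P_2=6$ outright (implicitly taking $q(S)=0$), whereas you observe that the crude bound $P_2\leq 6$, together with the fact that $c_0$ takes its minimum value $\tfrac{1}{19}$ over $P_2\in\{2,\dots,6\}$, already suffices, and you separately justify $q(S)=0$ via Debarre's inequality.
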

\begin{proof}
By the Kawamata--Viehweg vanishing theorem and the Riemann--Roch formula, we have 
\[
 h^0(S, 2K_{S})=\chi (S, 2K_{S})=K_{S}^2+ 1-q(S)+p_g(S)= 6.
\]
Then by Proposition~\ref{prop glct lower bound}, $\glct(S)\geq \frac{1}{2\times 19}=\frac{1}{38}$.
\end{proof} 

\section{An extension theorem for fibrations over curves and applications}
In this section, we establish an extension theorem for $3$-folds admitting a fibration over a curve (Theorem~\ref{thm:ext for surface}) and apply it to study fibrations of $(1,2)$-surfaces and $(2,3)$-surfaces (Theorem~\ref{thm1.3}) and bicanonical maps (Corollary~\ref{cor X/C}).
\subsection{An extension theorem for fibrations over curves}
\label{subsection4.1}

Recall the following result from \cite{Noether, Noether_Add} on minimality of relative minimal models in terms of glct. 

\begin{proposition}\label{prop CCJ fibration}
Let $X$ be a minimal $3$-fold of general type.
Assume that there exists a resolution $\pi:W\to X$ and a fibration $f:W\to C$ to a curve. Denote by $F$ a general fiber of $f$ and $F_0$ the minimal model of $F$. 
Assume that 
$\pi^*K_X-bF$ is $\mathbb{Q}$-effective
for some rational number $b\geq \frac{1}{\glct(F_0)}$. 
Denote by $W_0$ a relative minimal model of $W$ over $C$.
Then $W_0$ is minimal, namely, $K_{W_0}$ is nef. 
\end{proposition}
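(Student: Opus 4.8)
The plan is to show that a minimal model $W_0$ of $W$ over $C$ actually has $K_{W_0}$ nef globally, not just relatively nef, by ruling out the existence of a $K_{W_0}$-negative extremal ray that is \emph{not} contracted by the morphism $W_0 \to C$. Since $W_0 \to C$ is a relative minimal model, $K_{W_0}$ is relatively nef, so any $K_{W_0}$-negative curve class must map onto $C$; I want to derive a contradiction from the hypothesis that $\pi^*K_X - bF$ is $\mathbb{Q}$-effective with $b \geq 1/\glct(F_0)$.

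\textbf{Step 1: Reduce to a statement on the general fiber.} Let $g \colon W_0 \to C$ denote the induced fibration and $G$ a general fiber of $g$, which is a smooth surface birational to $F$, hence with minimal model $F_0$. Run a $K_{W_0}$-MMP over $\Spec \mathbb{C}$ (i.e., not over $C$). If $K_{W_0}$ is not nef, there is an extremal contraction; since $K_{W_0}$ is nef over $C$, this contraction does not factor through $g$, so it is birational or a Mori fiber space structure whose general fiber dominates $C$ only trivially—in any case the relevant extremal ray $R$ satisfies $R \cdot G > 0$ because $G$ moves and covers $W_0$. The key point is to transfer the $\mathbb{Q}$-effectivity of $\pi^*K_X - bF$ on $W$ down to $W_0$: since $W_0$ is obtained from $W$ by a relative MMP, and $\pi^*K_X$ is the pullback of a nef divisor, the pushforward of $\pi^*K_X - bF$ to $W_0$ remains $\mathbb{Q}$-effective and is of the form $K_{W_0} + (\text{correction}) - bG$ up to the relevant identifications; more precisely $K_{W_0}$ differs from the pushforward of $\pi^*K_X$ by an effective divisor supported on the exceptional locus plus the relative base locus.

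\textbf{Step 2: Restrict to $G$ and invoke glct.} Restricting to a general fiber $G$, the divisor $(\pi^*K_X - bF)|$ pushed to $W_0$ restricts to something $\mathbb{Q}$-linearly equivalent to $K_G$ minus the contribution of $bG|_G$; but $G|_G \equiv 0$ since $G$ is a fiber, so we obtain an effective $\mathbb{Q}$-divisor on $G$ numerically equivalent (via its birational descent to $F_0$) to $K_{F_0}$, and the coefficient $b$ enters as a multiplier. Here is where $\glct(F_0) \geq 1/b$ is used: if some component of $W_0$'s exceptional or negative locus met $G$ in a curve with high multiplicity, pulling back to the canonical model $F_0$ would produce a boundary $\Delta \equiv K_{F_0}$ violating $\lct(F_0; \Delta) \geq \glct(F_0)$, because the coefficient forced by the negative extremal ray would exceed $\glct(F_0)$. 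This is essentially the argument of \cite{Noether, Noether_Add}: the relative minimal model over $C$ is automatically a global minimal model once the "twisting" along the base, measured by $b$, is at least $1/\glct(F_0)$.

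\textbf{Main obstacle.} The delicate part is tracking the divisor $\pi^*K_X - bF$ through the relative MMP $W \dashrightarrow W_0$ and arguing that its restriction to a general fiber controls the positivity of $K_{W_0}$ along fibral curves in the right way—in particular, handling the correction terms (exceptional divisors, relative negative parts) so that one genuinely lands in the regime where Kollár's glct bound applies on $F_0$. One must be careful that $G$ is a \emph{general} fiber so that it avoids the (finitely many) fibers over which the birational map $W \dashrightarrow W_0$ is not an isomorphism on generic points, and that the numerical class of the restricted effective divisor on $G$ descends correctly to $F_0$. I expect this bookkeeping—rather than any deep new idea—to be the crux, and it is precisely the content for which one cites \cite{Noether, Noether_Add}.
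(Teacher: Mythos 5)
Your high-level intuition---that the glct bound on $F_0$ together with the $\mathbb{Q}$-effectivity of $\pi^*K_X-bF$ should prevent $K_{W_0}$ from being negative on horizontal curves---is the right one, and you correctly sense that the substance lives in \cite{Noether,Noether_Add}. But as written the proposal has a genuine gap, and the route you sketch is not the one the paper takes.

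The paper's argument is modular and goes through a clean intermediate statement that you do not identify: \cite[Proposition~2]{Noether_Add} shows that $b\geq 1/\glct(F_0)$ forces $(\pi^*K_X|_F\cdot E_0|_F)=0$ for \emph{every} $\pi$-exceptional prime divisor $E_0$ on $W$ (the glct enters here because a nonzero such intersection would, after pushing an effective divisor $\sim_\bQ \pi^*K_X|_F\equiv$ (something $\leq K_{F_0}$) down to the minimal model $F_0$, produce a boundary on $F_0$ with lct below $\glct(F_0)$). Once this numerical condition on $W$ is in hand, \cite[Lemma~3.2]{Noether} says the relative minimal model $W_0$ is in fact globally minimal. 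By contrast, your proposal runs a $K_{W_0}$-MMP directly and tries to derive a contradiction by restricting divisors to a general fiber $G$ of $g\colon W_0\to C$. The problem is that the dangerous $K_{W_0}$-negative curves are \emph{horizontal}: they meet $G$ in points, not in curves, so restricting divisors to $G$ does not directly control $K_{W_0}\cdot R$. You would need to compare $K_{W_0}$ with the strict transform (or pushforward) of $\pi^*K_X$ and show the difference is effective and numerically trivial in the relevant direction---which is precisely the comparison the $\pi$-exceptional condition makes possible. Your Step~2 gestures at ``pulling back to $F_0$ would produce a boundary $\Delta\equiv K_{F_0}$ violating the lct bound'' but never says what $\Delta$ is or how the negativity of $K_{W_0}$ would translate into a large coefficient on $F_0$; this is exactly the bookkeeping you flag as the crux, and without it the argument does not close. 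So the proposal is a plausible plan, not a proof: the missing idea is the reduction to the condition $(\pi^*K_X|_F\cdot E_0|_F)=0$ on $W$, which is what makes the glct input usable.
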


\begin{proof}
By \cite[Proposition~2]{Noether_Add}, the assumption $b\geq \frac{1}{\glct(F_0)}$ implies that there does not exist any $\pi$-exceptional prime divisor $E_0$ on $W$ with $(\pi^*(K_X)|_F\cdot E_0|_F)>0$.
Then the proposition follows from \cite[Lemma~3.2]{Noether}.
\end{proof}

By applying the Nadel vanishing theorem, we establish an extension theorem for $3$-folds admitting a fibration to a curve in terms of glct of general fibers. 

\begin{theorem}\label{thm:ext for surface}

Let $W$ be a smooth projective $3$-fold of general type and let $f:W\to C$ be a fibration to a curve. Fix a positive integer $k$. 
Suppose that
\[\Vol(W)>\frac{3k\Vol(F)}{\min\{\glct(F_0), 1\}}\]
holds for any general fiber $F$ of $f$ where $F_0$ is the minimal model of $F$. Then the natural restriction map 
\[ H^0(W, mK_W)\to \bigoplus_{i=1}^kH^0(F_{i}, mK_{F_{i}}) \]
is surjective for any integer $m\geq 2$, where $F_{i}(1\leq i\leq k)$ are distinct general fibers of $f$.
\end{theorem}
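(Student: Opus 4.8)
The plan is to reduce the statement to a single application of the Nadel vanishing theorem (Theorem~\ref{nadel}, or rather its $\mathbb{Q}$-divisor version Lemma~\ref{lem nadel2}) by constructing an auxiliary boundary whose only non-klt centers are the chosen fibers $F_1,\dots,F_k$, each appearing with multiplicity exactly one and with a unique non-klt place. First I would pass to a minimal model: let $X$ be a minimal model of $W$, so that $K_X$ is nef and big with $K_X^3=\Vol(W)=\Vol(X)$, and note that it suffices to prove the restriction map is surjective after pulling back to $X$ (the general fibers $F_i$ and their canonical volumes are unchanged, and $mK_W$ and $mK_X$ have the same sections for $m\geq 2$ since terminal $3$-fold singularities are canonical). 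The hypothesis $\Vol(W)>\tfrac{3k\Vol(F)}{\min\{\glct(F_0),1\}}$ then reads $K_X^3 > 3k b \cdot \Vol(F)$ for $b:=1/\min\{\glct(F_0),1\}\geq 1/\glct(F_0)$, and $\Vol(F,K_X|_F)=\Vol(F)$ since $K_X|_F$ and $K_F$ agree up to something $\mathbb{Q}$-effective on $F$ (indeed on the minimal model $F_0$ they are equal).

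Next I would produce the boundary. Since $K_X$ is big, by Lemma~\ref{lem:volume} applied to the fibration (after resolving if necessary, keeping track that $F|_F\sim 0$) we have
\[
\Vol(X, K_X - t F) \geq K_X^3 - 3t\,\Vol(F) > 0
\]
for every rational $t < \tfrac{K_X^3}{3\Vol(F)}$; in particular we may choose a rational $t$ with $kb \le t < \tfrac{K_X^3}{3\Vol(F)}$ (the hypothesis is exactly what makes this interval non-empty), so that $K_X - tF$ is big. Write $t = t_1+\dots+t_k$ with each $t_i \ge b$ rational. On a neighborhood of the generic point of each $F_i$, the divisor $t_i F_i$ is a fiber, so by Proposition~\ref{prop CCJ fibration} (whose hypothesis $b\ge 1/\glct(F_0)$ holds) combined with the standard fact that one can build, for each $i$, an effective $\mathbb{Q}$-divisor $\Delta_i \sim_{\mathbb{Q}} t_i F_i$ whose unique non-klt place has center $F_i$ with coefficient $1$ — this uses that $\glct(F_0)$ controls how a general-fiber restriction of a boundary of the right size must have log canonical threshold $\le 1$, forcing a non-klt place along $F_i$, while the Zariski-decomposition/perturbation argument of \cite{Noether, Noether_Add} keeps it unique and keeps $F_i$ minimal and isolated among non-klt centers. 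Then set $\Delta := \sum_i \Delta_i$, and after a small general perturbation absorbing the remaining bigness $K_X - tF \equiv$ (something big) into $\Delta$ so that $\Delta \sim_{\mathbb{Q}} t' K_X$ for some $t' < 1$ — more precisely, since $K_X-tF$ is big we can add $\epsilon K_X$ worth of effective divisor and still keep all the $F_i$ as the only non-klt centers, each still minimal, maximal (hence isolated by Proposition~\ref{prop non-klt center}(3)) with unique non-klt place.

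With $(X,\Delta)$ so arranged, Lemma~\ref{lem J=I} (applied fiber by fiber, or rather: the non-klt centers are the disjoint $F_i$, each with a unique non-klt place) gives $\mathcal{J}(X,\Delta) = \mathcal{I}_{F_1\cup\dots\cup F_k} = \bigcap_i \mathcal{I}_{F_i}$, and there is a short exact sequence
\[
0 \to \mathcal{O}_X(mK_X)\otimes \mathcal{J}(X,\Delta) \to \mathcal{O}_X(mK_X) \to \bigoplus_{i=1}^k \mathcal{O}_{F_i}(mK_X|_{F_i}) \to 0.
\]
Now $mK_X - K_X - \Delta \sim_{\mathbb{Q}} (m-1-t')K_X$ is nef and big because $m\ge 2$ and $t'<1$, so Lemma~\ref{lem nadel2} yields $H^1\big(X, \mathcal{O}_X(mK_X)\otimes\mathcal{J}(X,\Delta)\big)=0$ (near the $F_i$ the divisor $mK_X$ is Cartier, so the two formulations of the multiplier ideal agree there, and globally the twisted pushforward has vanishing $H^1$). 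Taking cohomology gives the surjectivity of $H^0(X,mK_X)\to\bigoplus_i H^0(F_i, mK_X|_{F_i})$, and since $F_i$ is a general fiber, $mK_X|_{F_i}\sim mK_{F_i}$, which is the claim. The main obstacle is the second step: producing for each $i$ an effective $\mathbb{Q}$-divisor $\mathbb{Q}$-linearly equivalent to $t_iF_i$ with a unique non-klt place whose center is exactly $F_i$ with multiplicity one, and verifying that after forming $\Delta=\sum\Delta_i$ (plus the bigness-absorbing perturbation) the $F_i$ remain the only non-klt centers, each isolated with a unique non-klt place — this is where Proposition~\ref{prop CCJ fibration}, the bound $b\ge 1/\glct(F_0)$, and a careful tie-breaking/perturbation argument in the spirit of \cite{Noether_Add} all have to be combined.
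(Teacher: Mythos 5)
Your high-level plan — pass to a nicer model, build a boundary whose multiplier ideal cuts out $\bigsqcup F_i$, and apply Nadel vanishing — is the paper's plan too, but the construction of the boundary has several concrete problems.

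First, the arithmetic is off. You set $\Delta_i \sim_{\mathbb{Q}} t_i F_i$ with $t_i\geq b$ and $\sum t_i = t$, and then absorb the remaining bigness $K_X - tF$ into $\Delta=\sum\Delta_i$; but then $\Delta$ is numerically a representative of $K_X$, not of $t'K_X$ with $t'<1$. Consequently $mK_X-K_X-\Delta\equiv (m-2)K_X$, which is nef but not big when $m=2$, and Nadel vanishing is lost exactly in the one case ($m=2$) that matters for the application. The paper avoids this by scaling: since $\Vol(\pi^*K_X - \frac{k}{w}F)>0$ for a rational $w<\min\{\glct(F_0),1\}$, one takes an effective $G\equiv wK_W-kF$ and uses the boundary $G+\sum_{i=1}^k F_i\equiv wK_W$. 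Now $mK_W-K_W-\Delta\equiv (m-1-w)K_W$ is nef and big for all $m\geq 2$ precisely because $w<1$. You need this multiplicative $w$; adding $\epsilon K_X$ at the end cannot fix it.

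Second, the role of $\glct(F_0)$ is reversed in your write-up. You describe it as ``forcing a non-klt place along $F_i$,'' but $\glct$ is a \emph{lower} bound on log canonical thresholds: it guarantees klt-ness of $(F_i, G|_{F_i})$ when $G|_{F_i}\equiv wK_{F_i}$ with $w<\glct(F_0)$. That klt-ness, plus inversion of adjunction, is what shows $(W, G+\sum F_i)$ is plt near each $F_i$ (the non-klt place along $F_i$ comes for free from the coefficient $1$ on $F_i$). For this to apply one needs $K_{F_i}$ to be the pullback of $K_{F_0}$, i.e.\ $F_i$ should already be a minimal surface — and this is exactly why the paper first replaces $W$ by a relative minimal model $W_0$ over $C$, using Proposition~\ref{prop CCJ fibration} (with $b=k/w\geq 1/\glct(F_0)$) to verify $W_0$ is minimal. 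In your proposal Proposition~\ref{prop CCJ fibration} is mentioned but never actually used for anything; this step is the reason it is needed. Finally, you assert $\mathcal{J}(X,\Delta)=\mathcal{I}_{F_1\cup\cdots\cup F_k}$ globally, but $G$ may well create non-klt centers elsewhere, so this identity need not hold on all of $X$. The paper's argument only identifies the multiplier ideal on an open neighborhood $U\supset\bigsqcup F_i$ and then extracts the desired summand of $H^0(W,\mathcal{Q})$ by a support argument; you should do the same rather than claiming a global description.
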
 

\begin{proof}
Take $X$ to be a minimal model of $W$ and take $W_0\to C$ to be a relative minimal model of $W$ over $C$. 
After replacing $W$ by a higher resolution, 
we may assume that there are natural morphisms $\pi:W\to X$ and $\mu: W\to W_0$. As
$K_W-\pi^*K_X$ is an effective $\pi$-exceptional $\mathbb{Q}$-divisor, we have
\begin{align*}
 \Vol(W)=&\Vol(W, K_W)=\Vol(W, \pi^*K_X),\\
 \Vol(F)=&\Vol(F, K_F)\geq \Vol(F, \pi^*K_X|_F).
\end{align*} 
Take a rational number $w<{\min\{\glct(F_0), 1\}}$ such that \[\Vol(W)>\frac{3k\Vol(F)}{w}>\frac{3k\Vol(F)}{\min\{\glct(F_0), 1\}}.\]
By Lemma~\ref{lem:volume}, 
\begin{align}
 \Vol\left(W, \pi^*K_X-\frac{k}{w}F\right){}&\geq \Vol(W, \pi^*K_X)-\frac{3k}{w} \Vol(F, \pi^*K_X|_F) \nonumber \\
 {}&\geq \Vol(W, K_W)-\frac{3k}{w} \Vol(F, K_F)>0. \label{eq K-k/wF}
 \end{align}

So by Proposition~\ref{prop CCJ fibration}, $W_0$ is minimal as $\frac{k}{w}\geq \frac{1}{w}>\frac{1}{\glct(F_0)}$. 

So after replacing $W$ by $W_0$, we may assume that $W$ is minimal (while $W$ is no longer smooth but with terminal singularities) and a general fiber $F$ of $f$ is a minimal surface of general type.
Recall that $\Vol(W, wK_{W}-kF)>0$ by \eqref{eq K-k/wF}.
So there exists an effective $\mathbb{Q}$-divisor \[G\equiv wK_{W}-kF \equiv wK_{W}-\sum_{i=1}^kF_{i}.\] 
As $F_{i}$ are general, we may assume that $G$ does not contain any $F_i(1\leq i\leq k)$. 
Consider the pair $(W, G+\sum_{i=1}^kF_{i})$. 
By the adjunction formula, we have \[\left. \left(K_X+G+\sum_{i=1}^kF_{i}\right)\right|_{F_{i}}=K_{F_{i}}+G|_{F_{i}}.\]
Note that $G|_{F_{i}}\equiv wK_{F_{i}}$ with $w<{\glct(F)}$, so $({F_{i}}, G|_{F_{i}})$ is klt by the definition of glct (see Remark~\ref{rem glct}). So by the inversion of adjunction \cite[Theorem~5.50]{K-M}, $(W, G+\sum_{i=1}^kF_{i})$ is plt in a neighborhood of $F_{i}$, which implies that $F_i$ is an isolated non-klt center. 
As $W$ has only terminal singularities and $F_{i}$ are general, we can take an open subset $U$ in the smooth locus of $W$ containing $\bigsqcup_{i=1}^k F_{i}$ such that $(W, G+\sum_{i=1}^kF_{i})$ is plt on $U$ with centers $F_{i}$.

Note that 
\[
mK_{W}-K_{W}-G-\sum_{i=1}^kF_{i}\equiv (m-1-w)K_{W}
\]
is nef and big as $w<1$ and $m\geq 2$. 
Take $\phi:W'\to W$ to be a log resolution of $(W, G+\sum_{i=1}^kF_{i})$.
Then by Lemma~\ref{lem nadel2}, 
$H^1(W', \mathcal{F})=0$
where \[\mathcal{F}=\phi_*\OO_{W'}\left(K_{W'}+\left\lceil\phi^*\left(mK_{W}-K_{W}-G-\sum_{i=1}^kF_{i}\right)\right\rceil\right).\]
Note that $\mathcal{F}$ is a subsheaf of $\mathcal{O}_{W}(mK_{W})$, so we have a natural short exact sequence 
\[
0\to \mathcal{F}\to \mathcal{O}_{W}(mK_{W}) \to \mathcal{Q}\to 0,
\]
and the restriction map
\begin{align}
H^0(W, mK_{W}) \to H^0(W, \mathcal{Q})\label{eq W to Q}
\end{align}
is surjective.
Then by Lemma~\ref{lem nadel2}, Lemma~\ref{lem J=I}, and the construction of $U$, we have 
\[
\mathcal{F}|_U\simeq \mathcal{O}_{U}(mK_{W})\otimes \mathcal{J}\left. \left(W, G+\sum_{i=1}^kF_{i}\right)\right|_U\simeq \mathcal{O}_{U}(mK_{W})\otimes \mathcal{I}_{\bigsqcup_{i=1}^kF_{i}}.
\]
This implies that \[\mathcal{Q}|_U\simeq 
\bigoplus_{i=1}^k\mathcal{O}_{U}(mK_{W})|_{F_{i}} \simeq \bigoplus_{i=1}^k\mathcal{O}_{F_{i}}(mK_{F_{i}}),\]
and it is a direct summand of $\mathcal{Q}$ by the support reason. 
So we conclude the proof by \eqref{eq W to Q} as 
$\bigoplus_{i=1}^k H^0(F_{i}, mK_{F_{i}})$ is a direct summand of $H^0(W, \mathcal{Q})$.
\end{proof}

\subsection{An application to fibrations of $(1,2)$-surfaces and $(2,3)$-surfaces} 

As an application of Theorem~\ref{thm:ext for surface}, we prove Theorem~\ref{thm1.3} on fibrations of $(1,2)$-surfaces and $(2,3)$-surfaces. 
First we recall properties of pluricanonical systems of $(1,2)$-surfaces and $(2,3)$-surfaces.
\begin{theorem}\label{surface} Let $S$ be a smooth projective surface of general type. 
\begin{itemize}
\item If $S$ is a $(1,2)$-surface, then $\Phi_{|mK_S|}$ is a generically finite map of degree $2$ for $2\leq m\leq 4$ and is a birational map for $m\geq 5$.
\item If $S$ is a $(2,3)$-surface, then $\Phi_{|mK_S|}$ is a generically finite map of degree $2$ for $1\leq m\leq 3$ and is a birational map for $m\geq 4$.
\end{itemize}
\end{theorem}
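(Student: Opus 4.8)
\emph{Proof proposal.} The plan is to reduce at once to a minimal surface, since for $m\ge 2$ the moving part of $|mK_S|$ is the pullback of the moving part of $|mK_{S_0}|$ under the contraction $S\to S_0$ onto the minimal model, so $\Phi_{|mK_S|}$ and $\Phi_{|mK_{S_0}|}$ have the same image and the same degree. On a minimal surface of general type one has $h^0(S,mK_S)=\chi(\OO_S)+\binom m2K_S^2$ for $m\ge 2$ by the Kawamata--Viehweg vanishing theorem and Riemann--Roch, which is at least $3$ in both cases; thus $\Phi_{|mK_S|}$ always maps into a projective space of dimension $\ge 2$, and what remains is to decide, for each admissible $m$, whether it is birational onto its image or generically finite of degree $2$ onto a surface. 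I would treat the two ranges of $m$ separately.

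For the birational range ($m\ge 5$ for a $(1,2)$-surface, $m\ge 4$ for a $(2,3)$-surface) the tool is Reider's theorem applied to $mK_S=K_S+(m-1)K_S$. Put $L=(m-1)K_S$, which is nef with $L^2=(m-1)^2K_S^2\ge 10$ under the stated numerical hypotheses. Reider's theorem then shows that if $|mK_S|$ fails to separate two (possibly infinitely near) general points, there is an effective divisor $D$ through them with $L\cdot D\le 2$; since $L\cdot D=(m-1)(K_S\cdot D)$ and $m-1\ge 3$, this forces $K_S\cdot D=0$, so $D$ is supported on $(-2)$-curves and moves in no family through a general point. Hence $|mK_S|$ separates general pairs of points, and $\Phi_{|mK_S|}$ is birational onto its image.

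For the remaining small values of $m$ I would use the classical structure of these two families of surfaces, which lie on or just above the Noether line. If $S$ is a $(1,2)$-surface, then by adjunction $|K_S|$ is a pencil whose general member $C$ is a smooth genus-$2$ curve, with a single simple base point $p_0$ and $K_S|_C\sim p_0$, $2p_0\sim K_C$; the relative hyperelliptic involution of this pencil is a biregular involution $\iota$ of $S$. For $2\le m\le 4$ the linear system cut out on a general $C$ by $|mK_S|$ is composed with the hyperelliptic map $g_C\colon C\to\bP^1$: it is $g_C^*\OO_{\bP^1}(1)=|K_C|$ for $m=2$, the same system with the fixed point $p_0$ adjoined (as $|3p_0|=p_0+|K_C|$) for $m=3$, and $g_C^*\OO_{\bP^1}(2)=|2K_C|$ for $m=4$. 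Since the $C$ sweep out $S$ and the moving part of $|mK_S|$ has positive self-intersection, it follows --- classically, via $\Phi_{|mK_S|}$ factoring through the quotient $S/\iota$ --- that $\Phi_{|mK_S|}$ is generically finite of degree $2$. If $S$ is a $(2,3)$-surface, then $\Phi_{|K_S|}\colon S\dashrightarrow\bP^2$ cannot be birational onto its image (a surface of general type has no birational morphism to a rational surface), hence it is generically $2$-to-$1$ onto $\bP^2$ and presents $S$, up to the canonical resolution of the branch locus, as a double cover $\pi\colon S\to\bP^2$ branched along an octic with at most simple singularities, for which $K_S=\pi^*\OO_{\bP^2}(1)$ and $\pi_*\OO_S(mK_S)=\OO_{\bP^2}(m)\oplus\OO_{\bP^2}(m-4)$. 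Thus for $m=2,3$ one has $H^0(S,mK_S)=\pi^*H^0(\bP^2,\OO_{\bP^2}(m))$, so $\Phi_{|mK_S|}$ is the $m$-uple Veronese embedding of $\bP^2$ composed with $\pi$ and hence generically finite of degree $2$; for $m\ge 4$ the anti-invariant summand $\OO_{\bP^2}(m-4)$ provides a section separating the two sheets of $\pi$, which together with the pullbacks from $|\OO_{\bP^2}(m)|$ makes $\Phi_{|mK_S|}$ birational, in agreement with the Reider argument above.

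The main obstacle is the low-degree range: the soft inputs (Reider's theorem for base-point-freeness, and bigness of the moving part for generic finiteness) show only that $\Phi_{|mK_S|}$ is generically finite onto a surface, and pinning the degree down to exactly $2$ genuinely requires the structural results recalled above --- the (bi)canonical involution of a $(1,2)$-surface, and the double-cover-of-$\bP^2$ presentation of a $(2,3)$-surface together with the singularity bookkeeping behind $K_S=\pi^*\OO_{\bP^2}(1)$. Both go back to Bombieri \cite{Bombieri} and to Horikawa's analysis of surfaces on the Noether line; granting them, the Reider estimates and the restriction computations on a general fibre are routine.
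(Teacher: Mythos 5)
Your proposal is correct and is essentially an unpacking of the paper's proof, which consists entirely of citations to \cite[Theorem~VII.5.1, Propositions~VII.7.1--7.2]{BHPV} (Reider's theorem and the Noether-line structure theorems) and to Horikawa's \cite[Lemma~1.1]{Horikawa1}, \cite[Lemma~2.1]{Horikawa2}. Your Reider estimate handles exactly the birational range covered by BHPV VII.5.1, and your description of the genus-$2$ pencil with base point $p_0$ and relative hyperelliptic involution (for a $(1,2)$-surface) and of the degree-$2$ cover of $\bP^2$ branched over an octic with the splitting $\pi_*\OO_S(mK_S)=\OO_{\bP^2}(m)\oplus\OO_{\bP^2}(m-4)$ (for a $(2,3)$-surface) are precisely the Horikawa structure results behind the remaining cases; the only suppressed routine steps are the surjectivity of the restrictions $H^0(S,mK_S)\to H^0(C,mK_S|_C)$ via $H^1((m-1)K_S)=0$ and the observation that a non-zero self-intersection of the moving part rules out a curve image.
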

\begin{proof}
 This follows from \cite[Theorem~VII.5.1, Proposition~VII.7.1, Proposition~VII.7.2]{BHPV}, \cite[Lemma~1.1]{Horikawa1}, and \cite[Lemma~2.1]{Horikawa2}.
\end{proof}

\begin{proof}[Proof of Theorem~\ref{thm1.3}]

 Denote by $F_0$ the minimal model of $F$. Note that
 \[
 \frac{6\Vol(F)}{\min\{ \glct(F_0),1 \}}\leq 
\begin{cases}
 60 &\text{if } F \text{ is a } (1,2)\text{-surface};\\
 456 &\text{if } F \text{ is a } (2,3)\text{-surface} 
\end{cases}
 \]
by \cite[Theorem~A.1]{Noether} and Lemma~\ref{lemma 2,3}.
Then by Theorem~\ref{thm:ext for surface} with $k=2$, the restriction map
\[
H^0(X, mK_X)\to H^0(F_1,mK_{F_1}) \oplus H^0(F_2,mK_{F_2})
\]
is surjective for any integer $m\geq 2$, where $F_1, F_2$ are distinct general fibers of $f$.
Then the conclusion follows from Proposition~\ref{prop tankeev} and Theorem~\ref{surface}.
\end{proof}

\subsection{Bicanonical maps of fibrations over curves}

In this subsection, we apply Theorem~\ref{thm:ext for surface} to study bicanonical maps of $3$-folds admitting a fibration over a curve. Note that Theorem~\ref{thm:ext for surface} works well if $\Vol(F)$ is relatively small, so we need the following result when $\Vol(F)$ is large.

\begin{proposition}\label{prop:whenvolgeq12}
Let $X$ be a minimal $3$-fold of general type.
Assume that there exists a resolution $\pi:W\to X$ and a fibration $f: W\to C$ to a curve with a general fiber denoted by $F$. 
Assume that $\pi^*K_X-bF$
 is $\mathbb{Q}$-effective for some rational number $b>2$. 
If \[\frac{(b-2)^2}{(b+1)^2} \Vol(F)>8,\] then $d_2(X)\geq 2$.
\end{proposition}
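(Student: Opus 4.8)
The plan is to produce, on the general fiber $F$, a pencil of sections of $2K_F$ (or at least a base-point-free sub-system separating two points), and then lift this to $X$ via an extension/surjectivity statement so that Lemma~\ref{lem separete Z1Z2} applies. Since $F$ is a surface of general type, $h^0(F,2K_F)\geq 2$ automatically, so the real content is the extension. Concretely, I would first pass to a setting where $K_W-\pi^*K_X$ is effective and $\pi$-exceptional (replace $W$ by a higher resolution), so that $\Vol(W)=\Vol(W,\pi^*K_X)$ and $\Vol(F)\geq\Vol(F,\pi^*K_X|_F)$. The hypothesis that $\pi^*K_X-bF$ is $\mathbb{Q}$-effective with $b>2$ and $\frac{(b-2)^2}{(b+2)^2}\Vol(F)>8$ is exactly what one needs to run a Nadel-vanishing argument with $2K_X$ as the line bundle: the gap $2-\text{(something}<2)$ available from $K_X$ after subtracting a fiber has to be positive, and the factor $\frac{b-2}{b+2}$ arises from rescaling so that the boundary is $\mathbb{Q}$-linearly equivalent to a multiple of $K_X$ strictly less than $2$ while still dominating a fiber.

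The key steps, in order: (i) choose a rational number $c$ with $2<c<b$ and $\frac{(c-2)^2}{(c+2)^2}\Vol(F)>8$ by continuity; set $\lambda=\frac{c-2}{c+2}\in(0,1)$, so that $\lambda c = c-2(1-\lambda)$ and $\lambda\cdot\frac{c+2}{?}$... more precisely, arrange a boundary $\Delta\equiv \lambda\, c\, \pi^*K_X$ with $\lambda c<2$ that still contains a fiber $F$ with coefficient $\geq 1$ after using that $\pi^*K_X-bF$ (hence $\pi^*K_X-cF$) is $\mathbb{Q}$-effective; the numerology is tuned so that $2K_X-K_X-\Delta$ is nef and big (since $\lambda c<2$ and one should check $1-\lambda c>-1$ trivially) and so that $\Delta|_F$ has positive volume bounded by $8$ from above, which by Lemma~\ref{lem:volume}-type reasoning forces a genuine non-klt center inside (a translate of) $F$. (ii) Run the Angehrn–Siu / Nadel machinery as in the proof of Theorem~\ref{thm:ext for surface}: perturb $\Delta$ to make its non-klt locus a minimal non-klt center $Z$, use Proposition~\ref{prop non-klt center} to control $Z$, and note $Z$ either is a point/curve in $F$ or is $F$ itself. (iii) In each case, build two boundaries creating two distinct non-klt centers (e.g. using two distinct general fibers $F_1,F_2$, as in Theorem~\ref{thm:ext for surface} with $k=2$), apply Nadel vanishing (Lemma~\ref{lem nadel2}) to get surjectivity of $H^0(X,2K_X)\to \bigoplus_i H^0(F_i, 2K_{F_i})$ or the analogue onto $H^0(Z_i,\ldots)$, and conclude $\dim\overline{\Phi_{|2K_X|}(X)}\geq 2$ via Lemma~\ref{lem separete Z1Z2} since each target has $h^0\geq 2$.

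The main obstacle I anticipate is step (i): extracting, from the single numerical inequality $\frac{(b-2)^2}{(b+2)^2}\Vol(F)>8$, a boundary that simultaneously (a) is $\mathbb{Q}$-linearly (or numerically) equivalent to $cK_X$ with $c<2$, (b) contains the fiber with multiplicity $\geq 1$ so that $F$ (or a subvariety of it) is forced to be non-klt, and (c) has controlled enough restricted volume on $F$ that one can cut the non-klt center down to a point without raising the coefficient past $1$ — the constant $8=2\cdot 2^2$ strongly suggests that on the surface $F$ one needs a point to be cut out by a divisor of self-intersection class $<2\sqrt{2}$-ish, i.e. a standard "$\Vol>(2\sqrt{2})^2$ separates one point" input on surfaces, combined with a clean tie-breaking step to pass from "separates points on $F$" to "separates points on $X$" via Tankeev's principle (Proposition~\ref{prop tankeev}) when $Z=F$. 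Keeping the rescaling factor honestly equal to $\frac{b-2}{b+2}$ throughout the Zariski-type estimate is where the bookkeeping is most delicate; everything downstream is a routine application of the vanishing and adjunction tools already set up.
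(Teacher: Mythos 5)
Your high-level strategy — use the two fibers as log canonical centers, apply a Kawamata--Viehweg/Nadel vanishing to get a surjective restriction, then invoke Lemma~\ref{lem separete Z1Z2} — is the right one, and matches the shape of the paper's argument. But the concrete plan you sketch has several gaps that the actual proof does not have, and the numerology you offer for the constant $\frac{(b-2)^2}{(b+2)^2}$ is not where it actually comes from.

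First, the surjection you need is not onto $H^0(F_i,2K_{F_i})$. Writing $\pi^*K_X-bF\sim_\mathbb{Q}D\geq 0$, the paper takes the boundary to be literally $F_1+F_2+\frac{2}{b}D$ (each fiber with coefficient exactly $1$, no tie-breaking or perturbation required), observes that
\[
\pi^*K_X-F_1-F_2-\tfrac{2}{b}D\equiv\bigl(1-\tfrac{2}{b}\bigr)\pi^*K_X
\]
is nef and big, and applies Kawamata--Viehweg vanishing on a log resolution to get surjectivity of
\[
H^0\bigl(W,K_W+\lceil\pi^*K_X-\tfrac{2}{b}D\rceil\bigr)\longrightarrow\bigoplus_{i=1}^2 H^0\bigl(F_i,K_{F_i}+\lceil L_i\rceil\bigr),
\quad L_i=(\pi^*K_X-\tfrac{2}{b}D)|_{F_i}.
\]
This is a \emph{subsheaf} of $\mathcal{O}_W(2K_W)$ (using $D\geq 0$ and $K_W\geq\pi^*K_X$), so the target $H^0(F_i,K_{F_i}+\lceil L_i\rceil)$ is a proper subspace of $H^0(F_i,2K_{F_i})$ in general. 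Your remark ``$h^0(F,2K_F)\geq 2$ automatically, so the real content is the extension'' is therefore not the right accounting: the whole point of the volume hypothesis is to guarantee that this \emph{smaller} space already has $h^0\geq 2$. This is exactly what Lemma~\ref{CJY4.3} provides: a nef and big $\mathbb{Q}$-divisor $L$ on a surface with $L^2>8$ has $h^0(F,K_F+\lceil L\rceil)\geq 2$. Nothing in your outline plays this role.

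Second, you misidentify the source of the denominator $(b+2)^2$. Your guess is that $\frac{b-2}{b+2}$ is a rescaling factor chosen to keep a boundary below $2K_X$. In fact the $(b-2)^2$ comes from $\bigl(1-\frac{2}{b}\bigr)^2=\frac{(b-2)^2}{b^2}$, and the denominator comes from the comparison $\bigl(1+\frac{1}{b}\bigr)\pi^*K_X|_F\succeq\sigma^*K_{F_0}$ (cited as \cite[Corollary~2.3]{Noether}), which gives
\[
L^2=\bigl(1-\tfrac{2}{b}\bigr)^2(\pi^*K_X|_F)^2\geq\frac{(b-2)^2}{(b+1)^2}K_{F_0}^2>\frac{(b-2)^2}{(b+2)^2}\Vol(F)>8.
\]
Without this input there is no way to pass from the hypothesis on $\Vol(F)=K_{F_0}^2$ to a bound on $L^2=(\pi^*K_X|_F)^2$ scaled down by $(1-2/b)^2$, and your proposed ``Zariski-type estimate'' bookkeeping would stall here.

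Third, the Angehrn--Siu/tie-breaking machinery you describe in steps (ii)--(iii) (perturbing to get a minimal non-klt center $Z$, cutting it down to a point or curve, controlling $Z$ via Proposition~\ref{prop non-klt center}) does not appear in the actual proof of this proposition and is unnecessary: the non-klt centers are the fibers $F_1,F_2$ by fiat, with coefficient exactly $1$. More to the point, it would blow your budget: you only have $1-\frac{2}{b}<1$ worth of room in the nef-and-big condition, and an Angehrn--Siu cut on a surface of volume just above $8$ would generically cost more than that. So as written, step (ii) is not merely a detour — it is a step that would fail.

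Finally, you appeal to Tankeev's principle (Proposition~\ref{prop tankeev}) to conclude; the proof instead uses Lemma~\ref{lem separete Z1Z2}, which is the right statement because it concludes $\dim\overline{\Phi_{|2K_X|}(X)}\geq 2$ directly from $h^0(F_i,K_{F_i}+\lceil L_i\rceil)\geq 2$ without needing generic finiteness on the fiber. Net: the skeleton is sound, but the two essential inputs — $H^0(F,K_F+\lceil L\rceil)\geq 2$ from $L^2>8$ (Lemma~\ref{CJY4.3}) and the comparison $\bigl(1+\frac{1}{b}\bigr)\pi^*K_X|_F\succeq\sigma^*K_{F_0}$ — are both missing, and the Angehrn--Siu step you interpolate in their place would not work.
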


\begin{proof}
We may write $\pi^*K_X-bF\sim_\bQ D\geq 0$ for some effective $\mathbb{Q}$-divisor $D$.
After replacing $W$ by a higher model, we may assume that $F+D$ has simple normal crossing support.
Take $F_1$ and $F_2$ to be $2$ general fibers of $f$. 
Since \[\pi^*K_X-F_1-F_2-\frac{2}{b}D\equiv \left(1-\frac{2}{b}\right)\pi^*K_X\]
is nef and big, 
by the Kawamata--Viehweg vanishing theorem \cite[Theorem~1.2.5]{KMM}, 
\[H^1\left(W, K_W+\left\lceil \pi^*K_X-F_1-F_2-\frac{2}{b}D\right\rceil\right)=0.\]
Then the restriction map 
\begin{align}
& H^0\left(W, K_W+\left\lceil{\pi^*K_X-\frac{2}{b}D}\right\rceil\right) \nonumber \\
\to &H^0(F_1, K_{F_1}+\roundup{L_1})\oplus H^0(F_2, K_{F_2}+\roundup{L_2}) \label{surjFi}
\end{align}
is surjective, where $L_i=(\pi^*K_X-\frac{2}{b}D)|_{F_i}$ for $i=1,2$. 

As $D\geq 0$ and $K_W \geq \pi^*K_X$, we have
\begin{align*}
 H^0\left(W, K_W+\left\lceil \pi^*K_X-\frac{2}{b}D \right\rceil\right) 
 \subset 
 H^0(W, 2K_W).
\end{align*}
So by the surjectivity of
\eqref{surjFi} and Lemma~\ref{lem separete Z1Z2}, it suffices to show that $h^0(F_i, K_{F_i}+\roundup{L_i})\geq 2$ for $i=1,2$, or in other words, $h^0(F, K_{F}+\roundup{L})\geq 2$ for a general fiber $F$ of $f$ where 
\[L=\left. \left(\pi^*K_X-\frac{2}{b}D\right)\right|_{F}\equiv \left(1-\frac{2}{b}\right)\pi^*K_X|_F.\]

By \cite[Corollary~2.3]{Noether}, 
$(1+\frac{1}{b})\pi^*K_X|_F-\sigma^*K_{F_0}$ is $\mathbb{Q}$-effective where $\sigma:F\to F_0$ is the morphism to the minimal model of $F$. Hence by assumption, 
 \begin{align*}
 L^2=\left(1-\frac{2}{b}\right)^2(\pi^*K_X|_F)^2
 \geq \frac{(1-\frac{2}{b})^2}{(1+\frac{1}{b})^2} K_{F_0}^2 >8.
 \end{align*}
 Then by Lemma~\ref{CJY4.3}, we have $h^0(F, K_{F}+\roundup{L})\geq 2$. This completes the proof. 
\end{proof}

The following lemma is derived from \cite[Lemma~4.3]{CJY24}.
\begin{lemma}[{\cite[Lemma~4.3]{CJY24}}]\label{CJY4.3}
Let $F$ be a smooth projective surface of general type
and $L$ be a nef and big $\mathbb{Q}$-divisor on $F$.
If $L^2>8$, then $h^0(F, K_{F}+\roundup{L})\geq 2$.
\end{lemma}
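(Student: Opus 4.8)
The plan is to reduce to the canonical model and apply Riemann--Roch together with a vanishing theorem, exactly in the spirit of classical bounds of Bombieri-type. First I would pass to the minimal model $\sigma\colon F\to F_0$; since $\sigma^*K_{F_0}$ is nef and $K_F=\sigma^*K_{F_0}+E$ with $E\geq 0$ exceptional, and since $L^2>8>0$, the divisor $K_F+\roundup{L}$ pushes forward compatibly, so it suffices to bound $h^0$ on $F_0$ where $K_{F_0}$ is nef and big. Thus I may assume $F$ minimal. The key numerical input is $L^2>8\geq K_F^2$ in the hard cases (the only minimal surfaces with $K^2\leq 8$ being the genuinely small ones; if $K_F^2\geq 9$ one should instead note $p_g(F)\geq\dots$ but this will be subsumed), so the Hodge index theorem will give control of the intersection $(K_F\cdot L)$.

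The main computation is Riemann--Roch for $D:=K_F+\roundup{L}$. Writing $N:=\roundup{L}$, so that $N-L\geq 0$ and $N$ is nef and big (being the round-up of a nef and big $\mathbb{Q}$-divisor, its class is $L$ plus an effective divisor, hence still nef and big, and in particular $N^2\geq L^2>8$), I compute
\[
\chi(F,K_F+N)=\chi(\OO_F)+\frac{1}{2}N\cdot(N+K_F).
\]
By Kawamata--Viehweg vanishing applied to $K_F+N=K_F+\roundup{L}$, with $N-0$... more precisely to the $\mathbb{Q}$-divisor pair, one gets $h^i(F,K_F+N)=0$ for $i>0$, so $h^0(F,K_F+N)=\chi(\OO_F)+\frac12 N\cdot(N+K_F)$. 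Since $F$ is of general type, $\chi(\OO_F)\geq 1$; since $N$ is nef and $K_F$ is nef, $N\cdot K_F\geq 0$; and $N^2>8$. Hence
\[
h^0(F,K_F+N)\geq 1+\frac12(N^2+N\cdot K_F)>1+\frac12\cdot 8+0=5>2.
\]

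The step I expect to require the most care is the vanishing $h^1(F,K_F+\roundup{L})=0$ together with verifying $N^2>8$ after the round-up: one must be slightly careful that $\roundup{L}$ is still nef and big — this follows because $\roundup{L}=L+\{-L\}$ with $\{-L\}\geq 0$ effective, so $\roundup{L}$ is $\mathbb{Q}$-linearly (indeed integrally) equal to a nef big class plus an effective divisor, hence big, and its intersection with any curve is $L\cdot C+\{-L\}\cdot C\geq 0$ only if the curves in $\Supp\{-L\}$ meet nonnegatively, which one arranges by first taking a log resolution so that $L$ has simple normal crossing fractional part; alternatively, and more cleanly, apply Kawamata--Viehweg vanishing directly to the $\mathbb{Q}$-divisor: $K_F+\roundup{L}=K_F+\roundup{L}$ and $\roundup{L}-L$... in fact the cleanest route is $H^1(F,\OO_F(K_F+\roundup{L}))=0$ by \cite[Theorem~1.2.5]{KMM} since $\roundup{L}\equiv L+(\roundup{L}-L)$ with $\roundup{L}-L$ having fractional support and $L$ nef and big, giving $h^0=\chi$, and then bounding $\chi$ as above using $L^2>8$ (one uses $\roundup{L}\cdot(\roundup{L}+K_F)\geq L\cdot(L+K_F)\geq L^2>8$ since $\roundup{L}-L\geq 0$ and $\roundup{L}+K_F$, $L$ are nef). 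Once these positivity bookkeeping points are settled, the conclusion $h^0\geq 2$ is immediate from $\chi(\OO_F)\geq 1$.
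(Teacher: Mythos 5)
There is a genuine gap, and the approach is quite different from (and, as written, weaker than) the paper's.

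Your plan is to compute $\chi(F, K_F+\roundup{L})$ by Riemann--Roch and kill $h^1$ by Kawamata--Viehweg; but the whole difficulty of the lemma lies in the fractional part of $L$, and the argument does not actually handle it. Setting $E:=\roundup{L}-L\geq 0$ (coefficients in $[0,1)$), the inequality you need,
\[
\roundup{L}\cdot(\roundup{L}+K_F)\ \geq\ L\cdot(L+K_F),
\]
is equivalent to $E\cdot(\roundup{L}+L+K_F)\geq 0$. You justify this by saying $\roundup{L}+K_F$ and $L$ are nef, but $\roundup{L}$ is not nef in general: if a component $C$ of $\Supp(E)$ has $L\cdot C$ small and $C^2$ very negative, then $\roundup{L}\cdot C=L\cdot C+E\cdot C<0$. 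You notice this and propose to fix it by passing to a log resolution so that the fractional part is SNC, but this destroys the other hypothesis you are using, namely that $K_F$ is nef (minimality). The two reductions — to the minimal model, to an SNC log resolution — pull in opposite directions and you never reconcile them. Concretely, on the log resolution $F'$, $K_{F'}$ is not nef, and the term $E'\cdot(E'+K_{F'})$ (with $E'$ the fractional part upstairs) can be arbitrarily negative, so the Riemann--Roch estimate simply does not close. The same issue affects the Kawamata--Viehweg vanishing itself: \cite[Theorem~1.2.5]{KMM} requires the fractional part to have SNC support, so $H^1(F,K_F+\roundup{L})=0$ is not available for free; after resolving to get SNC you can prove vanishing on $F'$, but then the Riemann--Roch bookkeeping on $F'$ breaks down as above. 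Finally, the very first reduction — that one may assume $F$ minimal because ``$K_F+\roundup{L}$ pushes forward compatibly'' — is not justified: $L$ is an arbitrary nef and big $\mathbb{Q}$-divisor on $F$, not pulled back from $F_0$, and there is no canonical $L_0$ on $F_0$ with the needed comparison of $h^0$'s.

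The paper's proof sidesteps all of this by a dichotomy on curves through a very general point: if every such curve $C$ has $L\cdot C\geq 4$, a birationality criterion of Ma\c sek/Chen gives $h^0\geq 2$ directly; otherwise there is a nef curve $C$ of genus $\geq 2$ through a very general point with $L\cdot C<4<\tfrac12 L^2$, so $L-(1+\epsilon)C$ is big, Sakai's lemma gives the $H^1$-vanishing needed to restrict to $C$, and Riemann--Roch \emph{on the curve} (where fractional parts cause no trouble) finishes. That route is precisely designed to avoid the fractional-part/nefness issues that your argument runs into, and it is where the constant $8$ in the hypothesis actually enters.
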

\begin{proof}
If $(L\cdot C)\geq 4$ for any irreducible curve $C$ passing through a very general closed point $P$, then $|K_F+\roundup{L}|$ defines a birational map by \cite[Proposition~4]{Mas99} or \cite[Lemma~2.5]{Chen14}.
In particular,
$h^0(F, K_{F}+\roundup{L})\geq 2.$

Now suppose that there exists an irreducible curve $C$ passing through a very general closed point $P$ such that $(L\cdot C)< 4<\frac{1}{2}L^2$. Since $P$ is very general, $C$ is a nef curve with $p_g(C)\geq 2$. Moreover, after taking a higher model of $F$ and replacing $L$ by its pullback, we may assume that $C$ is smooth; this step does not change $L^2$ or $(L\cdot C)$ by the projection formula. As $L^2>2(L\cdot C)$, there exists a sufficiently small positive number $\epsilon$ such that  $L^2>2(L\cdot (1+\epsilon)C)$, and hence $L-(1+\epsilon)C$ is big by \cite[Theorem~2.2.15]{Positivity1}. In particular, we may write 
$$L\sim_{\mathbb{Q}}(1+\epsilon)C+T,$$ 
where $T$ is an effective $\mathbb{Q}$-divisor.
Then by Sakai's lemma \cite[Example~9.4.12]{Positivity2},
\[
H^1\left(F, K_F+\left\lceil{L-C-\frac{1}{1+\epsilon}T}\right\rceil\right)=0,
\]
which implies that the natural map
\[
H^0\left(F, K_F+\left\lceil{L-\frac{1}{1+\epsilon}T}\right\rceil\right)\to H^0\left(C, K_C+\left. \left\lceil{L-\frac{1}{1+\epsilon}T-C}\right\rceil \right|_{C}\right)
\]
is surjective.
Since $\deg(\roundup{L-\frac{1}{1+\epsilon}T-C}|_C)>0$, it is clear that
\[h^0\left(C, K_C+\left. \left\lceil{L-\frac{1}{1+\epsilon}T-C}\right\rceil \right|_C\right)\geq 2\]
by the Riemann--Roch formula. Therefore, 
\[
h^0(F, K_F+\roundup{L})\geq h^0\left(F, K_F+\left\lceil{L-\frac{1}{1+\epsilon}T}\right\rceil\right)\geq 2. \qedhere\]
\end{proof}
Now we are ready to study the bicanonical maps. 
\begin{corollary}\label{cor X/C}
Let $X$ be a minimal $3$-fold of general type.
Assume that there exists a resolution $\pi:W\to X$ and a fibration $f:W\to C$ to a curve with a general fiber denoted by $F$. 
Assume that $ \Vol(X)>2.9\times10^4$ and $\pi^*K_X-bF$ is $\mathbb{Q}$-effective
for some rational number $b> 24$, then $d_2(X)\geq 2$.
\end{corollary}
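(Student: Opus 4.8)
The plan is to reduce to the two results already established, namely Proposition~\ref{prop:whenvolgeq12} and Theorem~\ref{thm:ext for surface}, via a dichotomy on the volume $\Vol(F)$ of a general fiber. First I would record the elementary reductions. Since $X$ is minimal and $\pi$ is a resolution, $K_W-\pi^*K_X$ is an effective $\pi$-exceptional $\mathbb{Q}$-divisor, so $\Vol(W)=\Vol(X)>2.9\times10^4$; moreover $\pi_*\OO_W(2K_W)=\OO_X(2K_X)$, so $\Phi_{|2K_W|}$ factors through $\pi$ and $\dim\overline{\Phi_{|2K_W|}(W)}=\dim\overline{\Phi_{|2K_X|}(X)}$, whence it suffices to bound the image of $\Phi_{|2K_W|}$. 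Next, since $W$ is of general type, the easy addition inequality $\kappa(W)\leq \kappa(F)+\dim C$ forces $\kappa(F)\geq 2$, so $F$ is a smooth projective surface of general type; consequently $h^0(F,2K_F)\geq 2$, its minimal model $F_0$ is a smooth minimal surface of general type, and $\Vol(F)=K_{F_0}^2$ is a positive integer. Hence either $\Vol(F)\leq 11$ or $\Vol(F)\geq 12$.

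In the case $\Vol(F)\geq 12$ I would invoke Proposition~\ref{prop:whenvolgeq12} directly: its hypotheses are that $\pi^*K_X-bF$ is $\mathbb{Q}$-effective for some $b>2$ (true here with $b>24$) and that $\frac{(b-2)^2}{(b+2)^2}\Vol(F)>8$. Since $b\mapsto\frac{(b-2)^2}{(b+2)^2}$ is increasing on $(2,\infty)$, we have $\frac{(b-2)^2}{(b+2)^2}>\frac{22^2}{26^2}=\frac{121}{169}$, so
\[ \tfrac{(b-2)^2}{(b+2)^2}\,\Vol(F)>\tfrac{121}{169}\cdot 12=\tfrac{1452}{169}>8, \]
and Proposition~\ref{prop:whenvolgeq12} yields $\dim\overline{\Phi_{|2K_X|}(X)}\geq 2$.

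In the case $\Vol(F)\leq 11$ I would apply Theorem~\ref{thm:ext for surface} with $k=2$. Since $K_{F_0}^2=\Vol(F)\leq 11$, Lemma~\ref{429} gives $\glct(F_0)\geq\frac{1}{429}$, so $\min\{\glct(F_0),1\}\geq\frac{1}{429}$ and
\[ \frac{3\cdot 2\cdot\Vol(F)}{\min\{\glct(F_0),1\}}\leq 6\cdot 11\cdot 429=28314<2.9\times10^4<\Vol(W). \]
Thus the hypothesis of Theorem~\ref{thm:ext for surface} holds, so the restriction map $H^0(W,2K_W)\to H^0(F_1,2K_{F_1})\oplus H^0(F_2,2K_{F_2})$ is surjective for two distinct general fibers $F_1,F_2$. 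As $h^0(F_i,2K_{F_i})\geq 2$, Lemma~\ref{lem separete Z1Z2} gives $\dim\overline{\Phi_{|2K_W|}(W)}\geq 2$, hence $\dim\overline{\Phi_{|2K_X|}(X)}\geq 2$.

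The only real obstacle is the numerical bookkeeping, and the point is that the thresholds in the statement are calibrated precisely so that this dichotomy works: the cut at $\Vol(F)=11$ versus $12$ is forced because Proposition~\ref{prop:whenvolgeq12} needs $\Vol(F)>8\cdot\frac{169}{121}=\frac{1352}{121}\approx 11.17$ once $b>24$, while Theorem~\ref{thm:ext for surface} with $k=2$ needs $\Vol(W)>6\cdot 11\cdot 429=28314$ once $\Vol(F)\leq 11$; the hypothesis $\Vol(X)>2.9\times10^4=29000$ is exactly an integer bound lying above $28314$, and integrality of $\Vol(F)$ is what makes the two cases exhaustive. No geometric input beyond the two cited results is required.
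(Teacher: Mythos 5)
Your proposal is correct and follows essentially the same route as the paper: the same dichotomy on $\Vol(F)$ at the threshold $11$ versus $12$, the same invocation of Proposition~\ref{prop:whenvolgeq12} for large fiber volume and of Theorem~\ref{thm:ext for surface} (with $k=2$, $m=2$) together with Lemma~\ref{lem separete Z1Z2} for small fiber volume, with identical numerical estimates. The only difference is that you spell out a few preliminary reductions (that $\Phi_{|2K_W|}$ factors through $\pi$, that $F$ is of general type by easy addition, and that $\Vol(F)=K_{F_0}^2\in\mathbb{Z}_{>0}$ makes the dichotomy exhaustive) which the paper treats as standard and omits.
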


\begin{proof}
 If $\Vol(F)\geq 12$, then 
 \[
 \frac{(b-2)^2}{(b+1)^2} \Vol(F)> \frac{22^2}{25^2}\cdot 12>8
 \]
 as $b>24$.
 Then the conclusion follows from Proposition \ref{prop:whenvolgeq12}.
 
 If $\Vol(F)\leq 11$, then
 $\glct(F_0)\geq \frac{1}{429}$ by Lemma \ref{429} where $F_0$ is the minimal model of $F$.
Hence by assumption, 
 \[
 \Vol(W)=\Vol(X)>2.9\times 10^4>\frac{6\Vol(F)}{\min \{ \glct(F_0),1 \}}.
 \]
 Then by Theorem \ref{thm:ext for surface} with $k=2$ and $m=2$, the restriction map
 \[
 H^0(W,2K_W) \to H^0(F_1,2K_{F_1}) \oplus H^0(F_2,2K_{F_2})
 \]
 is surjective, where $F_1,F_2$ are distinct general fibers of $f$.
 Moreover, $h^0(F, 2K_F)\geq 2$ for a general fiber $F$ of $f$ by the Riemann--Roch formula (see \cite[Page~185]{Bombieri}).
 Hence the conclusion follows from Lemma \ref{lem separete Z1Z2}. Here note that $\Phi_{|2K_W|}=\Phi_{|2K_X|}\circ \pi$.
 \end{proof}

\section{An extension theorem for non-klt centers of dimension $1$}\label{subsection4.2}
In this section, we prove an extension theorem for non-klt centers of dimension $1$ (Theorem~\ref{thmcurv}).

\subsection{Inversion of adjunction on curves}
We recall the following special case of inversion of adjunction for non-klt centers of dimension $1$. For the general version of inversion of adjunction, we refer the readers to \cite{FH23}.
\begin{proposition}\label{prop adjandinvadj}
 Let $(X,\Delta)$ be a projective pair and let $C$ be a curve on $X$ such that $(X, \Delta)$ is lc near the generic point of $C$ and $C$ is a non-klt center of $(X, \Delta)$.
 Then \[
(K_X+\Delta)\cdot C\geq 2p_g(C)-2.
\]
Furthermore, if there is a unique non-klt place of $(X,\Delta)$ over $C$ and
\[
(K_X+\Delta)\cdot C<2p_g(C)-1,
\]
then the following assertions hold:
\begin{enumerate}
 \item $C$ is smooth;
 \item $(X, \Delta)$ is lc in a neighborhood of $C$ and $C$ is an isolated non-klt center of $(X, \Delta)$; 
 \item $\mathcal{J}(X,\Delta)=\mathcal{I}_{C\sqcup Z}$ for some closed subscheme $Z$ disjoint from $C$;

 \item if $D$ is a Cartier divisor on $X$ such that $D-K_X-\Delta$ is nef and big, then the natural restriction map 
 \[H^0(X, \mathcal{O}_X(D)\otimes \mathcal{I}_Z)\to H^0(C, D|_C)\]
is surjective. 
 
\end{enumerate}

\end{proposition}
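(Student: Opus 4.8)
The plan is to pull the pair back to a log resolution, isolate the non-klt place over $C$, and transfer everything to an adjunction statement on the normalisation $\nu\colon C^\nu\to C$; the finer assertions then follow by combining the general adjunction and inversion of adjunction of \cite{FH23} with Proposition~\ref{prop non-klt center}, Lemma~\ref{lem J=I}, and the Nadel vanishing theorem. For the inequality: let $\pi\colon W\to X$ be a log resolution of $(X,\Delta)$ that extracts a non-klt place $E$ over $C$, and write $K_W+\Delta_W=\pi^*(K_X+\Delta)$; since $(X,\Delta)$ is lc at the generic point of $C$, the coefficient of $E$ in $\Delta_W$ is exactly $1$. Restricting the crepant pull-back to the smooth prime divisor $E$ and feeding the induced morphism $E\to C^\nu$ into the canonical bundle formula — equivalently, applying the adjunction formula of \cite{FH23}, or Kawamata's subadjunction when $C$ is a minimal non-klt center — one gets $\nu^*(K_X+\Delta)\sim_{\bQ}K_{C^\nu}+\mathrm{Diff}_{C^\nu}(\Delta)+M_{C^\nu}$ with $\mathrm{Diff}_{C^\nu}(\Delta)\geq 0$ effective and $M_{C^\nu}$ nef; taking degrees on $C^\nu$, using $\deg_{C^\nu}\nu^*(K_X+\Delta)=(K_X+\Delta)\cdot C$ and $\deg K_{C^\nu}=2p_g(C)-2$, yields $(K_X+\Delta)\cdot C\geq 2p_g(C)-2$.

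Now assume the non-klt place over $C$ is unique and $(K_X+\Delta)\cdot C<2p_g(C)-1$, i.e.\ $\deg\mathrm{Diff}_{C^\nu}(\Delta)+\deg M_{C^\nu}<1$. Uniqueness forces $E$ to be the only divisor over $X$ with center $C$ of discrepancy $\leq-1$; in particular no divisor with center $C$ has discrepancy $<-1$, and $C$ is maximal by Proposition~\ref{prop non-klt center}(4). I claim moreover that $(X,\Delta)$ is lc near $C$ and $C$ is a minimal non-klt center. Indeed, if $(X,\Delta)$ were not lc at some point of $C$, inversion of adjunction \cite{FH23} would make $(C^\nu,\mathrm{Diff}_{C^\nu}(\Delta))$ fail to be lc, forcing $\deg\mathrm{Diff}_{C^\nu}(\Delta)>1$; and if $(X,\Delta)$ were lc near $C$ but $C$ were not a minimal non-klt center — so $C$ would contain a non-klt point, by Proposition~\ref{prop non-klt center}(1) and dimension — then $(C^\nu,\mathrm{Diff}_{C^\nu}(\Delta))$ would fail to be klt, forcing $\deg\mathrm{Diff}_{C^\nu}(\Delta)\geq 1$. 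Either way $(K_X+\Delta)\cdot C\geq 2p_g(C)-1$, a contradiction. Hence $(X,\Delta)$ is lc near $C$ and $C$ is minimal; together with maximality, $C$ is isolated by Proposition~\ref{prop non-klt center}(3), which is (2); and since a minimal non-klt center is normal by Proposition~\ref{prop non-klt center}(2) and a normal curve is smooth, (1) follows.

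For (3), shrink $X$ to a neighbourhood $U$ of $C$ by deleting the other non-klt centers, which are disjoint from $C$ by (2); then $(X,\Delta)|_U$ is lc with $C$ as its unique non-klt center carrying a unique non-klt place, so Lemma~\ref{lem J=I} gives $\mathcal{J}(X,\Delta)|_U=\mathcal{I}_C|_U$. As the cosupport of $\mathcal{J}(X,\Delta)$ is the union of the non-klt centers, it equals $C\sqcup Z$ with $Z$ disjoint from $C$, and disjointness of $C$ and $Z$ gives $\mathcal{J}(X,\Delta)=\mathcal{I}_{C\sqcup Z}$. For (4), since $D-K_X-\Delta$ is nef and big, Theorem~\ref{nadel} gives $H^1(X,\mathcal{O}_X(D)\otimes\mathcal{J}(X,\Delta))=0$; tensoring the short exact sequence $0\to\mathcal{I}_{C\sqcup Z}\to\mathcal{I}_Z\to\mathcal{O}_C\to0$ (valid because $C\cap Z=\varnothing$) with the line bundle $\mathcal{O}_X(D)$ and passing to cohomology, the obstruction to lifting a section of $D|_C$ to $H^0(X,\mathcal{O}_X(D)\otimes\mathcal{I}_Z)$ lies in $H^1(X,\mathcal{O}_X(D)\otimes\mathcal{I}_{C\sqcup Z})=0$, so the restriction map is surjective.

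The main obstacle is the second paragraph: deducing, from the single strict inequality $(K_X+\Delta)\cdot C<2p_g(C)-1$ and uniqueness of the non-klt place alone, both that $(X,\Delta)$ is lc near $C$ and that $C$ contains no further non-klt center. This hinges on having the adjunction formula and its inversion for the curve $C$ — a non-klt center of codimension $2$ in the relevant case — in a sharp form: the effectivity of $\mathrm{Diff}_{C^\nu}(\Delta)$, the nefness of the moduli part, and the precise dictionary between the singularities of $(C^\nu,\mathrm{Diff}_{C^\nu}(\Delta))$ and those of $(X,\Delta)$ near $C$ (in particular, that a non-klt point of $(X,\Delta)$ lying on $C$ forces a coefficient $\geq1$ in $\mathrm{Diff}_{C^\nu}(\Delta)$). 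This is exactly the content of the general inversion of adjunction of \cite{FH23}; granting it, the remaining steps are routine applications of Proposition~\ref{prop non-klt center}, Lemma~\ref{lem J=I}, and the Nadel vanishing theorem.
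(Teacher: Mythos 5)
Your proof follows essentially the same route as the paper: both pass to the normalisation $\nu\colon C^\nu\to C$, invoke the adjunction and inversion of adjunction of \cite{FH23} to write $\nu^*(K_X+\Delta)=K_{C^\nu}+B_{C^\nu}+M_{C^\nu}$ with $B_{C^\nu}\geq 0$ and $M_{C^\nu}$ of nonnegative degree, deduce the degree inequality, and under the strict hypothesis conclude that $(X,\Delta)$ is lc near $C$ and that $C$ is a minimal non-klt center (the paper observes that $\deg B_{C^\nu}<1$ makes $(C^\nu,B_{C^\nu}+M_{C^\nu})$ an NQC generalized klt pair and quotes \cite[Theorem~1.1]{FH23} directly, whereas you reach the same conclusion by a two-case contradiction), after which Proposition~\ref{prop non-klt center}, Lemma~\ref{lem J=I}, and the Nadel vanishing theorem finish the argument exactly as in the paper. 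One small ordering slip: you invoke Proposition~\ref{prop non-klt center}(4) to deduce maximality of $C$ \emph{before} establishing that $(X,\Delta)$ is lc near $C$, but that proposition is stated for lc pairs; since maximality is not actually used inside your subsequent contradiction argument (which itself supplies the needed lc-ness), simply deferring the maximality claim until after that paragraph, as the paper does, repairs the exposition.
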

\begin{proof}
 Let $\nu: C^\nu \to C$ be the normalization of $C$.
 Since $(X, \Delta)$ is lc near the generic point of $C$ and $C$ is a non-klt center of $(X,\Delta)$, 
by \cite[Theorem~1.1, Theorem~1.2]{FH23}, there is an adjunction formula
\[
\nu^*(K_X+\Delta)=K_{C^\nu}+B_{C^\nu}+M_{C^\nu},
\]
where $B_{C^\nu}\geq 0$ and $M_{C^\nu}$ is semi-ample. Hence 
\[
(K_X+\Delta)\cdot C\geq \deg K_{C^\nu}=2p_g(C)-2.
\]

From now on, suppose that there is a unique non-klt place of $(X,\Delta)$ over $C$ and
\[
(K_X+\Delta)\cdot C<2p_g(C)-1.
\]
Then
 \begin{align*} \deg(B_{C^\nu})&=\deg(\nu^*(K_X+\Delta))-\deg(K_{C^\nu})-\deg(M_{C^\nu}) \\
 &\leq \deg(\nu^*(K_X+\Delta))-\deg(K_{C^\nu}) \\
 &=(K_X+\Delta)\cdot C-2p_g(C)+2<1. 
 \end{align*}
In particular, all coefficients of $B_{C^\nu}$ are smaller than $1$, and hence $({C^\nu}, B_{C^\nu}+M_{C^\nu})$ is an {\it NQC generalized klt
pair} (see \cite[Theorem~1.2]{FH23} for the definition). We only need to recall that as $C^\nu$ is a smooth curve, $({C^\nu}, B_{C^\nu}+M_{C^\nu})$ is an NQC generalized klt
pair if and only if all coefficients of $B_{C^\nu}$ are smaller than $1$.

Then by \cite[Theorem~1.1]{FH23}, $(X, \Delta)$ is lc in a neighborhood of $C$ and $C$ is a minimal non-klt center
of $(X,\Delta)$. In particular, $C$ is normal by Proposition~\ref{prop non-klt center}(2). 
As there is a unique non-klt place over $C$, $C$ is a maximal non-klt center by Proposition~\ref{prop non-klt center}(4). Hence it is an isolated non-klt center by Proposition~\ref{prop non-klt center}(3).

By Lemma~\ref{lem J=I}, we have $\mathcal{J}(X, \Delta)=\mathcal{I}_C$ in a neighborhood of $C$, so $\mathcal{J}(X,\Delta)=\mathcal{I}_{C\sqcup Z}$ for some closed subscheme $Z$ disjoint from $C$.
Then there is a short exact sequence 
\[
0\to \mathcal{J}(X,\Delta)\to \mathcal{I}_Z\to \mathcal{O}_C\to 0.
\]
By the Nadel vanishing theorem (Theorem~\ref{nadel}), 
the natural restriction map 
 \[H^0(X, \mathcal{O}_X(D)\otimes \mathcal{I}_Z)\to H^0(C, D|_C)\]
is surjective. 
\end{proof} 

\subsection{An extension theorem for non-klt centers of dimension $1$}\label{subsection5.2}
In this subsection, we apply Proposition~\ref{prop adjandinvadj} to study bicanonical maps of $3$-folds covered by non-klt centers of dimension $1$ of genus $2$.
\begin{theorem}\label{thmcurv}
Let $X$ be a minimal $3$-fold of general type and let $\pi:W\to X$ be a resolution.
Suppose that there are curves $C_1, C_2$ on $W$ and effective $\mathbb{Q}$-divisors $\Delta_1, \Delta_2 $ on $X$ with the following properties for $i=1,2$:
\begin{itemize} 
 
 \item $C_i$ is not contained in the exceptional locus of $\pi$;
 \item $(X, \Delta_i)$ is lc near the generic point of $\pi(C_i)$, and $\pi(C_i)$ is a non-klt center of $(X, \Delta_i)$ with a unique non-klt place over $\pi(C_i)$;

 \item $p_g(C_i)=2$ and $K_W\cdot C_i\leq 2$;
 \item $\Delta_i\sim_\mathbb{Q}\delta_iK_X$ for some positive rational number $\delta_i$ with $\delta_1+\delta_2 \leq \frac{1}{3};$

 \item $\pi(C_2)\not\subset \Supp(\Delta_1)$.
\end{itemize} 
Then the following assertions hold:
\begin{enumerate}
 \item $C_1, C_2$ are smooth.
 \item The natural restriction map \begin{align}
 H^0(W, 2K_W) \to H^0(C_1,2K_{W}|_{C_1})\oplus H^0(C_2,2K_{W}|_{C_2}) \label{surj C1C2}
 \end{align}
 is surjective. 
 \item 
 $d_2(X)=d_2(W)\geq 2$. 
\end{enumerate} 
\end{theorem}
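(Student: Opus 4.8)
The plan is to exploit Proposition~\ref{prop adjandinvadj} twice, once for each pair $(X,\Delta_i)$, to turn the curve non-klt centers into isolated ones with computable multiplier ideals, and then to glue the two resulting extension statements via Lemma~\ref{lem rest 5 lemma} and Lemma~\ref{lem separete Z1Z2}. First I would check the numerical hypothesis of Proposition~\ref{prop adjandinvadj} for each $i$: since $K_W\cdot C_i\le 2$ and $C_i$ is not contained in $\Exc(\pi)$, the projection formula gives $K_X\cdot \pi(C_i)\le K_W\cdot C_i\le 2$ (here $\pi^*K_X\le K_W$ as $X$ has terminal, in particular canonical, singularities), hence
\[
(K_X+\Delta_i)\cdot \pi(C_i)=(1+\delta_i)K_X\cdot\pi(C_i)\le (1+\tfrac13)\cdot 2=\tfrac83<3=2p_g(C_i)-1.
\]
Thus for each $i$, part (1) of Proposition~\ref{prop adjandinvadj} tells us $\pi(C_i)$ is smooth, so $C_i\cong\pi(C_i)$ is smooth (giving assertion (1) of the theorem), and parts (2)--(4) give that $\pi(C_i)$ is an isolated non-klt center, $\mathcal J(X,\Delta_i)=\mathcal I_{\pi(C_i)\sqcup Z_i}$ for some $Z_i$ disjoint from $\pi(C_i)$, and the restriction map $H^0(X,\OO_X(D)\otimes\mathcal I_{Z_i})\to H^0(\pi(C_i),D|_{\pi(C_i)})$ is surjective whenever $D-K_X-\Delta_i$ is nef and big.

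Next I would apply this with $D=2K_X$: since $2K_X-K_X-\Delta_i=(1-\delta_i)K_X$ is nef and big (as $\delta_i<1$ and $K_X$ is nef and big), we get that
\[
H^0\bigl(X,\OO_X(2K_X)\otimes\mathcal I_{Z_1}\bigr)\to H^0(\pi(C_1),2K_X|_{\pi(C_1)})
\]
is surjective, and likewise for $i=2$. The subtle point is to combine the two. I would use the pair $(X,\Delta_1+\Delta_2)$: its boundary is $\sim_\bQ(\delta_1+\delta_2)K_X$ with $\delta_1+\delta_2\le\frac13$, and because $\pi(C_2)\not\subset\Supp(\Delta_1)$ one checks that near $\pi(C_2)$ the pair $(X,\Delta_1+\Delta_2)$ still has $\pi(C_2)$ as a non-klt center with a unique non-klt place (the contribution of $\Delta_1$ is negligible there), while $(1+\delta_1+\delta_2)K_X\cdot\pi(C_2)\le\frac83<3$ still holds, so Proposition~\ref{prop adjandinvadj} applies to $(X,\Delta_1+\Delta_2)$ at $\pi(C_2)$ as well. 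This yields a surjection $H^0(X,\OO_X(2K_X)\otimes\mathcal I_{Z_2'})\to H^0(\pi(C_2),2K_X|_{\pi(C_2)})$ where $Z_2'$ is disjoint from $\pi(C_2)$ but now contains $\pi(C_1)$ (indeed $\mathcal J(X,\Delta_1+\Delta_2)\subset\mathcal J(X,\Delta_1)=\mathcal I_{\pi(C_1)\sqcup Z_1}$, so $\pi(C_1)$ lies in the cosupport). Feeding $C_1:=\pi(C_1)$ and $C_2:=\pi(C_2)$ into Lemma~\ref{lem rest 5 lemma}, with the surjectivity $H^0(X,2K_X)\to H^0(\pi(C_1),2K_X|_{\pi(C_1)})$ coming from the $i=1$ case (note $Z_1$ is disjoint from $\pi(C_1)$, so $\mathcal O_X(2K_X)\otimes\mathcal I_{\pi(C_1)}$ surjects onto $\mathcal O_X(2K_X)\otimes\mathcal I_{\pi(C_1)}/\mathcal J(X,\Delta_1)\hookrightarrow$ the $Z_1$-part, and Nadel vanishing $H^1(X,\OO_X(2K_X)\otimes\mathcal J(X,\Delta_1))=0$ gives the lift) and the ideal-sheaf surjectivity $H^0(X,\OO_X(2K_X)\otimes\mathcal I_{\pi(C_1)})\to H^0(\pi(C_2),2K_X|_{\pi(C_2)})$ factoring through the $Z_2'\supset\pi(C_1)$ statement, I conclude that
\[
H^0(X,2K_X)\to H^0(\pi(C_1),2K_X|_{\pi(C_1)})\oplus H^0(\pi(C_2),2K_X|_{\pi(C_2)})
\]
is surjective. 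Pulling back by $\pi$ (using $\pi_*\OO_W(2K_W)=\OO_X(2K_X)$ and that $C_i\to\pi(C_i)$ is an isomorphism, so $2K_W|_{C_i}$ and $2K_X|_{\pi(C_i)}$ agree up to the effective $\pi$-exceptional part which does not meet $C_i$) gives the surjection \eqref{surj C1C2}, which is assertion (2).

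Finally, assertion (3) follows from Lemma~\ref{lem separete Z1Z2}: it suffices to know $h^0(C_i,2K_W|_{C_i})\ge 2$. Since $C_i$ is a smooth curve with $p_g(C_i)=2$, one has $\deg(2K_W|_{C_i})\ge\deg(2K_X|_{\pi(C_i)})$; I would argue $2K_X\cdot\pi(C_i)\ge 2p_g(C_i)-2=2$ from the adjunction inequality $(K_X+\Delta_i)\cdot\pi(C_i)\ge 2p_g(C_i)-2$ together with $\Delta_i\cdot\pi(C_i)=\delta_i K_X\cdot\pi(C_i)$ being at most $\frac13 K_X\cdot\pi(C_i)$, forcing $K_X\cdot\pi(C_i)\ge\frac{2p_g(C_i)-2}{1+\delta_i}\ge\frac{3}{2}$, hence $\ge 2$ by integrality, so $\deg(2K_W|_{C_i})\ge 4>2g(C_i)-2=2$ and Riemann--Roch gives $h^0(C_i,2K_W|_{C_i})=\deg+1-g\ge 3\ge 2$. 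Then \eqref{surj C1C2} and Lemma~\ref{lem separete Z1Z2} yield $\dim\overline{\Phi_{|2K_W|}(W)}\ge 2$, and since $\Phi_{|2K_W|}=\Phi_{|2K_X|}\circ\pi$ this is exactly $\dim\overline{\Phi_{|2K_X|}(X)}\ge 2$.

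The main obstacle I anticipate is the bookkeeping in the gluing step: verifying that after passing to the pair $(X,\Delta_1+\Delta_2)$ the center $\pi(C_2)$ genuinely retains a \emph{unique} non-klt place (so that Proposition~\ref{prop adjandinvadj} and Lemma~\ref{lem J=I} still apply), and carefully tracking the inclusions among the auxiliary subschemes $Z_1,Z_2'$ so that the two hypotheses of Lemma~\ref{lem rest 5 lemma} are literally met — in particular that the ``second'' surjectivity genuinely comes from a statement about sections vanishing along all of $\pi(C_1)$, not just along $Z_2'$. The hypothesis $\pi(C_2)\not\subset\Supp(\Delta_1)$ is exactly what makes the first of these work, and isolatedness of $\pi(C_1)$ (disjointness of $Z_1$) is what makes the first hypothesis of Lemma~\ref{lem rest 5 lemma} hold; the rest is a diagram chase.
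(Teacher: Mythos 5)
Your overall strategy — apply Proposition~\ref{prop adjandinvadj} first with $\Delta_1$ at the first curve and then with $\Delta_1+\Delta_2$ at the second, use $\pi(C_2)\not\subset\Supp\Delta_1$ to keep the non-klt place unique, observe that $\pi(C_1)$ lies in the cosupport of the second multiplier ideal so $\mathcal I_{Z_2'}\subset\mathcal I_{\pi(C_1)}$, and glue via Lemma~\ref{lem rest 5 lemma} and Lemma~\ref{lem separete Z1Z2} — is exactly the paper's. But executing it on $X$ rather than $W$ creates a genuine gap. The variety $X$ is minimal with terminal (not necessarily Gorenstein) singularities, so $2K_X$ is $\mathbb Q$-Cartier but in general not Cartier; Proposition~\ref{prop adjandinvadj}(4) and the underlying Nadel vanishing require a \emph{Cartier} divisor $D$, and the objects $H^0(\pi(C_i),2K_X|_{\pi(C_i)})$ are not even well-behaved when $\pi(C_i)$ passes through a non-Gorenstein point. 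The paper sidesteps this entirely by transporting the whole construction to the smooth model $W$, working with the pairs $(W,\pi^*\Delta_1+E_\pi)$ and $(W,\pi^*(\Delta_1+\Delta_2)+E_\pi)$ (where $E_\pi=K_W-\pi^*K_X\ge0$), so that the ambient divisor is the honest Cartier divisor $2K_W$. This is also why the numerical verification in the paper uses the \emph{lower} bound $K_X\cdot\pi(C_i)\ge\frac{2}{1+\delta_i}$ from the first part of Proposition~\ref{prop adjandinvadj} (to control $(K_W+\pi^*(\Delta_1+\Delta_2)+E_\pi)\cdot C_i<3$ on $W$), rather than your upper bound $K_X\cdot\pi(C_i)\le 2$, which is natural for an argument on $X$ but does not feed into the estimate one needs on $W$.

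Two further problems follow from the same choice. First, your descent from $X$ back to $W$ asserts that $2K_W|_{C_i}$ and $2K_X|_{\pi(C_i)}$ ``agree up to an exceptional part which does not meet $C_i$''; but $C_i\not\subset\Exc(\pi)$ does not prevent $C_i$ from meeting $\Exc(\pi)$, so $2E_\pi|_{C_i}$ is in general a nonzero effective divisor, $h^0(C_i,2K_W|_{C_i})$ can be strictly larger than $h^0(\pi(C_i),2K_X|_{\pi(C_i)})$, and surjectivity onto the smaller space does not give surjectivity onto the larger one — which is exactly what assertion (2) of the theorem demands. Second, your deduction ``$K_X\cdot\pi(C_i)\ge\frac32$, hence $\ge 2$ by integrality'' is incorrect because $K_X$ is only $\mathbb Q$-Cartier, so $K_X\cdot\pi(C_i)$ is not an integer; the correct version of this step (which the paper uses implicitly) is that $K_W\cdot C_i\ge K_X\cdot\pi(C_i)>\frac32$ \emph{and} $K_W\cdot C_i\in\mathbb Z$ (since $W$ is smooth), whence $K_W\cdot C_i=2$ and $\deg(2K_W|_{C_i})=4$ gives $h^0(C_i,2K_W|_{C_i})=3$. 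In short, the core combinatorics of your gluing is right, but the proof must be run on $W$ with the pairs $(W,\pi^*\Delta_1+E_\pi)$ and $(W,\pi^*(\Delta_1+\Delta_2)+E_\pi)$ and the lower bound on $K_X\cdot\pi(C_i)$; as written, the Cartier and descent issues are real gaps.
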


\begin{proof} 
We may write $K_W=\pi^*K_X+E_\pi$ where $E_\pi$ is an effective $\pi$-exceptional $\mathbb{Q}$-divisor.

For $i=1,2$, by applying Proposition~\ref{prop adjandinvadj} to $(X, \Delta_i)$ and $\pi(C_i)$, we have 
\begin{align*}
 (1+\delta_i)K_X\cdot \pi(C_i)=(K_X+\Delta_i)\cdot \pi(C_i)\geq 2p_g(C_i)-2=2.%\label{eq KC>=2}
\end{align*}
Then by $\delta_1+\delta_2 \leq \frac{1}{3}$, we have
\begin{align*}
 &(K_W+\pi^*(\Delta_1+\Delta_2)+E_\pi)\cdot C_i=2K_W\cdot C_i-(1-\delta_1-\delta_2)K_X\cdot \pi(C_i)\\
 \leq &4- \frac{2(1-\delta_1-\delta_2)}{1+\delta_i}<4- \frac{2(1-\delta_1-\delta_2)}{1+\delta_1+\delta_2}\leq 3=2p_g(C_i)-1.
\end{align*} 
In particular, 
\begin{align}
 & (K_W+\pi^*\Delta_1+E_\pi)\cdot C_1 <2p_g(C_1)-1,\label{eq KC<3 C1}\\
 & (K_W+\pi^*(\Delta_1+\Delta_2)+E_\pi)\cdot C_2 <2p_g(C_2)-1.\label{eq KC<3 C2}
\end{align}
By assumption, $C_1$ and $C_2$ are not contained in $\Supp(E_\pi)$ and $C_2\not\subset \Supp(\pi^*\Delta_1)$. So we know that
\begin{itemize}
 \item $(W, \pi^*\Delta_1+E_\pi)$ is lc near the generic point of $C_1$, and $C_1$ is a non-klt center of $(W, \pi^*\Delta_1+E_\pi)$ with a unique non-klt place over $C_1$;
 \item $(W, \pi^*(\Delta_1+\Delta_2)+E_\pi)$ is lc near the generic point of $C_2$, and $C_2$ is a non-klt center of $(W, \pi^*(\Delta_1+\Delta_2)+E_\pi)$ with a unique non-klt place over $C_2$. 
\end{itemize} 
Also 
\begin{align*}
 & 2K_W-K_W-\pi^*\Delta_1-E_\pi\sim_\bQ(1-\delta_1)\pi^*K_X,\\
 & 2K_W-K_W-\pi^*(\Delta_1+\Delta_2)-E_\pi\sim_\bQ(1-\delta_1-\delta_2)\pi^*K_X
\end{align*}
are nef and big. 

Applying Proposition~\ref{prop adjandinvadj} with \eqref{eq KC<3 C1} to $(W, \pi^*\Delta_1+E_\pi)$ and $C_1$, 
we get that $$\mathcal{J}(W, \pi^*\Delta_1+E_\pi)=\mathcal{I}_{C_1\sqcup Z_1}$$ for some closed subscheme $Z_1$ disjoint from $C_1$ and the natural map
\[
H^0(W, \mathcal{O}_W(2K_W)\otimes \mathcal{I}_{Z_1})\to H^0(C_1, 2K_W|_{C_1})
\]
is surjective. 
This implies that 
 \begin{align}
 H^0(W, 2K_W)\to H^0(C_1, 2K_W|_{C_1}) \label{eq surj C1}
\end{align}
is surjective.

Applying Proposition~\ref{prop adjandinvadj} with \eqref{eq KC<3 C2} to $(W, \pi^*(\Delta_1+\Delta_2)+E_\pi)$ and $C_2$, 
we get that $$\mathcal{J}(W, \pi^*(\Delta_1+\Delta_2)+E_\pi)=\mathcal{I}_{C_2\sqcup Z_2}$$ for some closed subscheme $Z_2$ disjoint from $C_2$ and the natural map 
\[
H^0(W, \mathcal{O}_W(2K_W)\otimes \mathcal{I}_{Z_2})\to H^0(C_2, 2K_W|_{C_2})
\]
is surjective. 
By construction, $(W, \pi^*(\Delta_1+\Delta_2)+E_\pi)$ is not klt at $C_1$ as $C_1$ is a non-klt center of $(W, \pi^*\Delta_1+E_\pi)$, hence 
 $C_1\subset \Supp(Z_2)$ and $\mathcal{I}_{Z_2}\subset \mathcal{I}_{C_1}$.
This implies that 
\begin{align}
 H^0(W, \mathcal{O}_W(2K_W)\otimes \mathcal{I}_{C_1})\to H^0(C_2, 2K_W|_{C_2})\label{eq surj C2} 
\end{align}
is surjective.

Combining the surjectivity of \eqref{eq surj C1} and \eqref{eq surj C2}, we get the surjectivity of \eqref{surj C1C2} by Lemma~\ref{lem rest 5 lemma}. 
Since $\deg K_W|_{C_i}=2$, 
we have 
$h^0(C_i, 2K_W|_{C_i})=3$ for $i=1,2$ by the Riemann--Roch formula. 
Hence $d_2(W)\geq 2$ by Lemma~\ref{lem separete Z1Z2}.
\end{proof}

We need the following well-known result on the degree of covering family of curves. 
\begin{lemma}\label{lem degKW}
Let $W, V, T$ be smooth projective varieties with a fibration  $f: V\to T$  of relative dimension $1$ and a surjective morphism $\phi: V\to W$. Suppose that the induced map $F\to W$ is birational onto its image for a general fiber $F$ of $f$. Then \[K_W\cdot \phi(F)\leq 2p_g(F)-2.\]
\end{lemma}
\begin{proof}
By \cite[Lemma~2.28]{Bir19}, we may assume that $\phi$ is generically finite after cutting $T$ by hyperplane sections. Then by the Hurwitz formula, 
$K_V-\phi^*K_W$ is effective. As $F$ is general, by the projection formula we get $K_W\cdot \phi(F)\leq K_V\cdot F =2p_g(F)-2.$ 
\end{proof}

\section{Proof of Theorem~\ref{thm1.1}}\label{section7}

In this section, we prove Theorem~\ref{thm1.1}.

Firstly we make preparation on constructing a family of non-klt centers. The following lemma allows us to cut down the dimension of a non-klt center of codimension $1$ (See \cite{CJ17}). 

\begin{lemma}\label{genefam}
Let $X$ be a normal projective variety of dimension $d$ and let $A$ be an ample $\bQ$-divisor.
Let $Z$ be a $\bQ$-Cartier $\bQ$-divisor on $X$ such that for any general closed point $x\in X$, there are distinct prime divisors $Z_x$ and $Z'_x$ containing $x$ such that $Z_x\equiv Z'_x\equiv Z$. 
Fix a rational number $0<a\ll 1$.
Then for general closed points $x, y\in X$, 
after possibly switching $x,y$, there is an effective $\mathbb{Q}$-divisor $\Delta\sim_\bQ b Z+aA$ with $b\leq 4$ such that 
\begin{itemize}
 \item $(X,\Delta)$ is not klt near $y$ but it is lc near $x$ with a unique non-klt place whose center contains $x$, and the center is denoted by $G$;
 \item $\dim G<d-1$; 
 \item $(aA|_G)^{\dim G}\leq (4d^2)^d$ if $\dim G>0$.
\end{itemize} 
\end{lemma}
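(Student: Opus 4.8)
The plan is to run the standard "create and tie-break" argument for producing non-klt centers via a covering family of divisors, following the style of \cite{CJ17}, and then cut down the dimension once using the two-point trick.

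First I would produce divisors through the points. Fix general closed points $x,y\in X$. By hypothesis there are distinct prime divisors $Z_x,Z'_x$ through $x$ (and similarly $Z_y,Z'_y$ through $y$) all numerically equivalent to $Z$. Since $X$ is $d$-dimensional, a general member of a big linear system passing through a point vanishes to order $\geq 1$ there, but to force a non-klt singularity at a point on a $d$-fold one needs multiplicity essentially $d$; the point of having two distinct divisors in the class $Z$ is that $\frac{d}{2}(Z_x+Z'_x)$ has a non-klt center through $x$ of dimension $<d-1$ (because the two divisors meet properly, their common part has codimension $\geq 2$, which is where the worst singularity concentrates). More precisely I would form $D_x=\tfrac{d}{2}(Z_x+Z'_x)+(\text{small ample})$, which is $\sim_\bQ cZ+\tfrac{a}{2}A$ with $c\leq d$, and whose non-klt locus at $x$ lies in $Z_x\cap Z'_x$, hence has dimension $\leq d-2$. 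Do the same at $y$. Then consider $\Delta'=D_x+D_y\sim_\bQ 2cZ+aA$ with $2c\leq 2d$; this is still too big, so the actual bound $b\leq 4$ must come from a better choice of coefficients. Re-examining: one takes $\lambda Z_x+\lambda Z'_x$ with $\lambda$ as small as possible so that the pair is non-klt at both $x$ and $y$ simultaneously after adding a tiny ample; a dimension count (volume of $Z$ restricted to a line, or rather the multiplicity bound $\mathrm{mult}_x \leq$ something in terms of intersection numbers) gives that $\lambda$ can be taken $\leq 2$ on each of the two divisors, yielding total coefficient $\leq 4$ on $Z$. I would carry out this numerical optimization carefully using \cite[2.2.15]{Positivity1}-type bigness criteria and the fact that two numerically equivalent prime divisors through a general point have intersection of the expected codimension.

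Second, the \emph{tie-breaking} step. Having a pair that is non-klt at both $x$ and $y$, I would use the standard trick (perturb by a general ample divisor and take the log-canonical threshold, then add a further small ample to isolate) to arrange that, after possibly switching the roles of $x$ and $y$, the pair $(X,\Delta)$ is lc near $x$ with a \emph{unique} non-klt place whose center $G$ contains $x$, while remaining non-klt near $y$. Uniqueness of the non-klt place over $G$ is what Proposition~\ref{prop non-klt center}(4) needs downstream, and it is obtained by the usual "lct plus generic ample" argument: adding $\eta H$ for general $H\ni x$ and small $\eta$, the threshold is computed by a single divisor. During this step the coefficient $b$ on $Z$ does not increase, and $a$ stays fixed (only the ample part absorbs the perturbation, which is legitimate since $0<a\ll 1$ gives room).

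Third, the \emph{dimension and volume bounds on $G$}. Because $x$ is general and lies in $Z_x\cap Z'_x$, the center $G$ is contained in this intersection, so $\dim G\leq d-2<d-1$. For the volume bound $(aA|_G)^{\dim G}\leq (4d^2)^d$ when $\dim G>0$: since $G$ is cut out (set-theoretically) inside divisors numerically equivalent to multiples of $Z$ with bounded coefficients, and $\Delta\sim_\bQ bZ+aA$ with $b\leq 4$, one bounds $A^{\dim G}\cdot G$ by intersecting with $\Delta$'s — roughly $A^{j}\cdot G \leq (K_X+\Delta)^{j}\cdot(\text{stuff})$-type estimates are not available, so instead I would use that $G$ is a non-klt center and apply the standard multiplicity/degree bound: a minimal non-klt center of $(X,\Delta)$ with $\Delta\equiv bZ+aA$ has $(aA)^{\dim G}\cdot G$ controlled by $\dim X$ and the coefficients, giving the explicit $(4d^2)^d$ after plugging $b\leq 4$ and bookkeeping the factors of $d$ from the covering-family construction. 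This is essentially the content of \cite[Lemma~??]{CJ17} and I would cite or reprove it.

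The \textbf{main obstacle} is the numerical optimization in the first step: getting the coefficient on $Z$ down to $b\leq 4$ (rather than the naive $\sim 2d$) requires genuinely using that there are \emph{two} numerically equivalent divisors through each point and that the singularity one needs lives in their (higher-codimension) intersection, so that a coefficient near $2$ on each suffices; keeping careful track of how the ample perturbation $aA$ interacts with the tie-breaking so that its final coefficient is exactly $a$ (not something slightly larger) is the delicate bookkeeping, and the explicit constant $(4d^2)^d$ in the volume bound must be matched to whatever constants come out of that optimization.
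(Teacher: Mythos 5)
Your step~1 is where the proof goes wrong. You try to force a non-klt center through $x$ by driving the multiplicity up to~$d$ (starting from $\tfrac d2(Z_x+Z'_x)$, then claiming a ``multiplicity bound'' lets you take $\lambda\leq 2$), but that is an Angehrn--Siu--type mechanism and it naturally produces coefficients of size $\sim d$, not $\leq 4$; the ``re-examining'' paragraph does not close this gap, because the multiplicity estimate you appeal to is not stated and does not exist in the form you need. In particular ``$\lambda\leq 2$ on each'' is not the right constant, and it only involves $Z_x,Z'_x$, which gives no control at~$y$ at all.

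The paper's argument is much softer and entirely avoids multiplicities. Take $Z_0$ to be the reduced part of $Z_x+Z'_x+Z_y+Z'_y$, so $Z_0\equiv b_0Z$ with $2\leq b_0\leq 4$, and set $c_0=\min\{c: (X,cZ_0)\text{ is not klt near both }x\text{ and }y\}$. Because $Z_0$ is reduced with components through $x$ and through $y$, automatically $c_0\leq 1$: at $c=1$ the divisor itself is a non-klt center. After switching so that $(X,c_0Z_0)$ is lc near $x$, the minimal non-klt center $G_x$ through $x$ has $\dim G_x<d-1$ for the following reason, and it has nothing to do with multiplicities: if $c_0<1$ no divisor can be a non-klt center (all coefficients are $<1$); if $c_0=1$ then both $Z_x$ and $Z'_x$ are non-klt centers through $x$, and by Proposition~\ref{prop non-klt center}(1) a component of $Z_x\cap Z'_x$ is again a non-klt center, which has codimension $\geq 2$. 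That is the whole use of the hypothesis that there are \emph{two} distinct divisors through a general point. The bound $b\leq 4$ then falls out for free as $s\,c_0\,b_0\leq 1\cdot 1\cdot 4$, where $s\leq 1$ comes from the Birkar-style tie-breaking lemma \cite[Lemma~2.16]{Bir23} that produces a unique non-klt place, and the volume bound $(aA|_G)^{\dim G}\leq(4d^2)^d$ is obtained by at most $d-2$ applications of \cite[Lemma~2.18]{Bir23}, each absorbing $\tfrac{a}{2d}A$. Your steps~2 and~3 are in the right spirit but are built on the flawed step~1; also, $G$ need not lie in $Z_x\cap Z'_x$ when $c_0<1$ (it can sit inside some other pairwise intersection or a singular locus of a component), so that part of your step~3 is not quite right either, though the conclusion $\dim G<d-1$ holds by the dichotomy above.
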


\begin{proof}
We may assume that $x, y$ are smooth points of $X$.
Take $Z_0$ to be the reduced part of $Z_x+Z'_x+Z_y+Z'_y$. 
Then $Z_0\equiv b_0Z$ with $2\leq b_0\leq 4$.
Set 
\[
c_0:=\min\{c\mid (X, cZ_0) \text{ is not klt near } x \text{ and } y\}.
\]
After possibly switching $x,y$, we may assume that $(X, c_0Z_0)$ is lc near $x$. 
Take $G_x$ to be the minimal non-klt center of $(X, c_0Z_0)$ containing $x$, then $\dim G_x<\dim X-1$: indeed, if $c_0<1$, then this is clear; if $c_0=1$, then this follows from Proposition~\ref{prop non-klt center}(1).

We claim that either $y\in G_x$ or $(X, c_0Z_0)$ has a non-klt center containing $y$ but not $x$. 
If $(X, c_0Z_0)$ is not lc near $y$, then $\{y\}$ is a non-klt center of $(X, c_0Z_0)$ by the proof of \cite[Corollary~2.31(1)]{K-M}.
If $(X, c_0Z_0)$ is lc near $y$, taking $G_y$ to be the minimal non-klt center of $(X, c_0Z_0)$ containing $y$, then $\dim G_y<\dim X-1$ by the same argument as in the first paragraph. 
After possibly switching $x, y$, we may assume that $\dim G_y\leq \dim G_x$. Then by the minimality of $G_x$, we know that either $G_y=G_x$ or $x\not \in G_y$. So the claim is proved. 

Then by \cite[Lemma~2.16]{Bir23}, there exist rational numbers $0 \leq t \ll s \leq 1$ and an effective $\mathbb{Q}$-divisor $\Delta'\sim_\mathbb{Q}sc_0Z_0+tA$ such that $(X,\Delta')$ is not klt near $y$ but it is lc near $x$ with a unique non-klt place whose center contains $x$, and the
center of this non-klt place is $G:=G_x$.

If $\dim G>0$ and $(\frac{a}{4d}A|_G)^{\dim G} > d^d$, then by \cite[Lemma~2.18]{Bir23}, we may replace $\Delta'$ by $\Delta''\sim_{\bQ} sc_0Z_0+tA+\frac{a}{2d}A$ and replace $G$ by a proper subvariety $G'\subsetneq G$. This process can be repeated only $d-2$ times, so in the end we may assume that 
$\Delta'\sim_\mathbb{Q}sc_0Z_0+t'A$ for some $t\leq t'< t+\frac{a}{2}$ and 
$(\frac{a}{4d}A|_G)^{\dim G} \leq d^d$ if $\dim G>0$.

Here note that $\Delta'\sim_\mathbb{Q}sc_0Z_0+t'A\equiv sc_0b_0 Z+t'A$ with $sc_0b_0\leq 4$ and $t'<a$ as $t\ll 1$. 
Then we conclude the proof by taking $\Delta:=\Delta'+A'\sim_\mathbb{Q}sc_0b_0 Z+aA$ where $A'$ is a general effective ample $\mathbb{Q}$-divisor with $A'\sim_{\mathbb{Q}} (a-t')A+(sc_0b_0Z+tA-\Delta')$ whose support does not contain $x,y$. 
\end{proof}

\begin{definition}[{See \cite[\S\,2.14]{Bir23}}]
 Let $X$ be a normal projective variety. A {\it bounded
family} $\mathcal{G}$ {\it of subvarieties} of $X$ is a family of closed subvarieties such that 
there is a projective morphism $V\to T$ between schemes of finite type and a morphism $V\to X$ which embeds the fibers of $V\to T$ over closed points into $X$, and each member of
the family $\mathcal{G}$ is isomorphic to a fiber of $V\to T$ over some closed point. Such morphism can be constructed from the Hilbert schemes or the Chow varieties of subvarieties with bounded degree, see \cite[Chapter~1]{KollarRC}. 
\end{definition}

Now we construct a bounded family of non-klt centers on $3$-folds with large canonical volumes.
\begin{proposition}\label{prop G and Delta}
 Let $X$ be a minimal $3$-fold of general type.
 Suppose that $(\delta K_X)^3>6^3$ for some positive rational number $\delta>0$.
Then there is a bounded family $\mathcal{G}$
of subvarieties of $X$ and a positive rational number $\delta'>0$ such that 
\begin{itemize}
 \item for each pair $x,y\in X$ of general closed points, after possibly switching $x,y$, there is a member $G_{x, y}$ of the family $\mathcal{G}$ and an effective $\mathbb{Q}$-divisor $\Delta_{x, y}\sim_\bQ \delta'K_X$ such that $(X,\Delta_{x, y})$ is not klt near $y$ but it is lc near $x$ with a unique non-klt place whose center contains $x$, and this
center is $G_{x, y}$;

\item one of the following holds:
\begin{enumerate}
 \item $\delta'< \delta$ and there exists a resolution $\pi:W\to X$ and a fibration $f:W\to C$ to a curve such that $\delta'\pi^*K_X-F$ is big, where $F$ is a general fiber of $f$;

 \item $\delta'< 4\delta$, $\dim G_{x, y}=1$, and $p_g(G_{x, y})=2$;

 \item $\delta'< 6\delta+\frac{1}{2}$ and $\dim G_{x, y}=0$.
\end{enumerate}
\end{itemize}
\end{proposition}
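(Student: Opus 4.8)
The plan is to run the standard ``create a non-klt singularity at two general points, then tie-break and cut the center down'' argument. Fix a resolution $\pi\colon W\to X$ with $K_W=\pi^*K_X+E_\pi$ ($E_\pi\ge 0$ and $\pi$-exceptional), and use bigness of $K_X$ throughout to absorb small ample summands into tiny effective $\bQ$-divisors whose support avoids the relevant points, so that every boundary can be normalized to be $\bQ$-linearly equivalent to a rational multiple of $K_X$. Since $\Vol(\delta K_X)>6^3$ I may choose a rational $\lambda<\delta$ with $\Vol(\lambda K_X)$ large enough that, by asymptotic Riemann--Roch, for general $x,y\in X$ there is an effective $\Delta_0\sim_\bQ\lambda K_X$ with $\mult_x\Delta_0>\dim X$ and $\mult_y\Delta_0>\dim X$, so that $(X,\Delta_0)$ is not klt at $x$ and at $y$. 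Replacing $\Delta_0$ by $c_0\Delta_0$ for the threshold $c_0:=\min\{c:(X,c\Delta_0)\text{ is not klt near both }x\text{ and }y\}\le 1$ and switching $x,y$ if needed, I may assume $(X,c_0\Delta_0)$ is lc near $x$ and not klt near $y$; then \cite[Lemma~2.16]{Bir23}, applied as in the proof of Lemma~\ref{genefam} with a small auxiliary ample $A$, yields an effective $\Delta_1\sim_\bQ\delta_1K_X$ with $\delta_1<\delta$ which is not klt near $y$, lc near $x$ with a unique non-klt place over a center $G_1\ni x$ of bounded degree, and $\dim G_1\le 2$ (the tie-break alone does not lower dimension). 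Carrying this out uniformly in $(x,y)$, the bounded degree of the $G_1$ together with the construction of bounded families from Hilbert schemes produces the required family $\mathcal G$.

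Suppose $\dim G_1=2$. Then $G_1$ is a prime divisor appearing with coefficient $1$ in $\Delta_1$, so $\delta_1\pi^*K_X-G_1^W$ is $\bQ$-effective (here $G_1^W$ is the strict transform of $G_1$), and it can be made big by retaining a small ample summand in $\Delta_1-G_1$. As $(x,y)$ vary these $G_{x,y}$ sweep out $X$, forming a bounded covering family of prime divisors; if a general point of $X$ lies on a single member of this family, then (after Stein factorization) these members are the general fibers of a fibration $f\colon W\to C$ onto a curve, and we are in case~(1) with $\delta'=\delta_1<\delta$. Otherwise a general point lies on at least two distinct members, giving two prime divisors $Z_x\equiv Z'_x\equiv Z$ with $\delta_1 K_X-Z$ $\bQ$-effective; feeding $Z$ into Lemma~\ref{genefam} and renormalizing, we obtain instead a new boundary $\sim_\bQ\delta_1 K_X$, with $\delta_1$ now only bounded by $4\delta$, whose center $G_1$ has dimension $\le 1$, and we continue as below.

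Now $\dim G_1\le 1$ and $\Delta_1\sim_\bQ\delta_1K_X$ with $\delta_1<4\delta$. If $\dim G_1=0$ we are in case~(3), since $\delta_1<4\delta<6\delta+\tfrac12$. If $\dim G_1=1$, put $C:=G_1$; as $\{G_{x,y}\}$ is a bounded covering family of curves mapping birationally onto their images, Lemma~\ref{lem degKW} gives $K_W\cdot C^W\le 2p_g(C)-2$, while $K_W\cdot C^W\ge\pi^*K_X\cdot C^W=K_X\cdot\pi(C)>0$ for a general member (as $K_X$ is big), so $p_g(C)\ge 2$. If $p_g(C)=2$ we are in case~(2) with $\delta'=\delta_1<4\delta$. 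If $p_g(C)\ge 3$, one further step cuts $C$ down to a point: by the adjunction formula of \cite{FH23} (cf.\ Proposition~\ref{prop adjandinvadj}) applied to $(X,\Delta_1)$ and $C$ one gets $(1+\delta_1)(K_X\cdot C)=(K_X+\Delta_1)\cdot C\ge 2p_g(C)-2\ge 4$, hence $K_X\cdot C\ge\tfrac{4}{1+\delta_1}$; choosing $\Delta'\sim_\bQ\varepsilon K_X$ with $\varepsilon$ slightly larger than $\tfrac{1}{K_X\cdot C}\le\tfrac{1+\delta_1}{4}$ and concentrated at one point of $C$, and using inversion of adjunction \cite{FH23}, the pair $(X,\Delta_1+\Delta')$ acquires an isolated non-klt point through $x$; after one more tie-break this lands us in case~(3) with $\delta'<\delta_1+\tfrac{1+\delta_1}{4}+\varepsilon'<6\delta+\tfrac12$ for arbitrarily small $\varepsilon'>0$.

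I expect the main obstacle to be the quantitative control in this last step — verifying that cutting a genus-$\ge 3$ curve center down to a point, together with the attendant section-lifting via inversion of adjunction, inflates the coefficient by less than $2\delta+\tfrac12$ — together with the bookkeeping needed to make a single $\delta'$ and a single bounded family $\mathcal G$ serve all general pairs $(x,y)$, and to guarantee that after each tie-breaking and cutting step the non-klt place over the center is again unique while $y$ remains in the non-klt locus. The clean exclusion of $p_g(C)\le 1$ via Lemma~\ref{lem degKW} and bigness of $K_X$ is precisely what makes case~(2) come out as stated.
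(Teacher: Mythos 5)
Your proposal follows the same overall strategy as the paper: produce a boundary $\Delta_{x,y}\sim_\bQ\delta' K_X$ that is lc near $x$ with a unique non-klt place over a center $G_{x,y}$ (after tie-breaking), then split according to $\dim G_{x,y}$, and in the curve case according to $p_g$. Your treatment of the divisorial case (using the dichotomy ``single member through a general point'' versus ``at least two members'' and feeding the latter into Lemma~\ref{genefam}) matches the paper's treatment via the generic-finiteness of the Hilbert-scheme morphism $\pi\colon W\to X$, and your routing of the $\dim G=0$ and the $p_g(G)=2$ curve cases is the same.

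The step you flag as the main obstacle is indeed where the argument goes wrong. To cut a curve center $C$ with $p_g(C)\ge 3$ down to a point you invoke a coefficient increase only slightly larger than $\frac{1}{K_X\cdot C}\le\frac{1+\delta_1}{4}$, i.e.\ enough to get $\deg(\Delta'|_C)>1$. But the cutting-down lemma actually used here (Birkar's \cite[Lemma~2.18]{Bir23}, which rests on Koll\'ar's \cite[Theorem~6.8.1]{SOP}) requires roughly \emph{twice} that: if $A'$ is an ample $\bQ$-divisor with $\Vol(A'|_C)>(\dim C)^{\dim C}=1$, what it produces is a boundary with an extra summand of size $2A'$, not $A'$. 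This factor of $2$ is intrinsic to the Angehrn--Siu-type argument and cannot be dropped simply by invoking inversion of adjunction and ``concentrating $\Delta'$ at a point of $C$''; one must first perturb the existing boundary so that $C$ is no longer a non-klt center and then reintroduce non-klt-ness at a point, which costs the extra degree. Replacing your increment $\frac{1+\delta_1}{4}+\varepsilon'$ with the correct $\frac{2(1+\delta_1)}{4}=\frac{1+\delta_1}{2}$ gives $\delta'<\delta_1+\frac{1+\delta_1}{2}<4\delta+\frac{1+4\delta}{2}=6\delta+\frac12$, which is exactly the bound in the statement. Your version happens to claim the stronger bound $5\delta+\frac14$, so the conclusion would still follow if the argument were valid, but as written the estimate is not justified. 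Two smaller points: (i) the uniqueness of the non-klt place and the persistence of non-klt-ness near $y$ through each tie-break/cut are not just bookkeeping — they are the content of \cite[Lemma~2.16, Lemma~2.18]{Bir23}, which should be cited rather than left implicit; (ii) boundedness of the resulting family of centers after each cut also needs the explicit boundedness statements in \cite[\S\,2.15]{Bir23} (or the paper's Lemma~\ref{genefam}), rather than only the initial Hilbert-scheme construction you mention.
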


\begin{proof}
Fix a sufficiently small rational number $\epsilon>0$ such that $((\delta-\epsilon) K_X)^3>6^3$.
By \cite[Proposition~2.61]{K-M}, there exists an effective $\bQ$-divisor $E$ such that $A_k:=K_X-\frac{1}{k}E$ is ample for any sufficiently large integer $k$.

 By \cite[\S\,2.15(2)]{Bir23}, 
 there is a bounded family $\mathcal{G}$
of subvarieties of $X$ such that 
for each pair $x,y\in X$ of general closed points, there is a member $G_{x, y}$ of the family $\mathcal{G}$ and an effective $\mathbb{Q}$-divisor $ \Delta_{x, y}\sim_\bQ (\delta-\epsilon) K_X$ such that $(X,\Delta_{x, y})$ is not klt near $y$ but it is lc near $x$ with a unique non-klt place whose center contains $x$, and this
center is $G_{x, y}$. 

Set $\delta': =\delta-\epsilon$. In the following we will discuss on $\dim G_{x, y}$ and will modify $\mathcal{G}, \Delta_{x, y}, \delta'$ to satisfy our requirement.

\medskip

{\bf Case 1}. $\dim G_{x, y}=2$ and $\delta'<\delta$.

In this case, $\delta' K_X-G_{x, y}\sim_{\bQ}\Delta_{x, y} -G_{x, y}\geq 0$. After replacing $\delta'$ by $\delta'+\frac{\epsilon}{2}$, we may assume that $\delta' K_X-G_{x, y}$ is big and we fix an effective $\bQ$-divisor $E'\sim_{\bQ} \delta' K_X-G_{x, y}$.
By taking a birational modification of the family $\mathcal{G}$, 
we get a fibration $f:W\to T$ between smooth projective varieties and a surjective morphism $\pi:W\to X$ such that for a general fiber $F$ of $f$, $\pi$ maps $F$ birationally onto its image, $\pi_*F\equiv G_{x, y}$, and $\delta' K_X-\pi_*F$ is big. We may assume that $\phi$ is generically finite after cutting $T$ by hyperplane sections. Here we may assume that $W$ is constructed from 
the Hilbert scheme of subvarieties of $X$, which means that different general fibers of $\pi$ are mapped to  different subvarieties of $X$. 

If $\pi$ is birational, then $\delta'\pi^*K_X-F$ is big as $\delta' K_X-G_{x, y}$ is big. In this case we get (1).

If $\pi$ is not birational, then 
$\pi$ is generically finite of degree at least $2$. Since $W$ is constructed from the Hilbert scheme of subvarieties of $X$,
for any general closed point $x\in X$, there are distinct prime divisors $G_x$ and $G'_x$ containing $x$ such that $G_x\equiv G'_x\equiv G_{x, y}$. 

Then by Lemma~\ref{genefam},
we can replace $(\Delta_{x, y}, G_{x, y}, \delta')$ by a new triple $(\Delta'_{x, y}, G'_{x, y}, b\delta'+a)$ where $\Delta'_{x, y}$ is an effective $\bQ$-divisor satisfying \[\Delta'_{x, y}\sim_\bQ bG_{x, y}+aA_k+\frac{a}{k}E+bE'\sim_\bQ
(b\delta'+a)K_X
\] for some $b\leq 4$ and $0<a\ll 1$, $\dim G'_{x, y}\leq 1$, and $G'_{x, y}$ belongs to a bounded family (of curves or points), which reduces the discussion to Case 2.
Here adding $\frac{a}{k}E+bE'$ does not affect the singularities near $x,y$.

\medskip

{\bf Case 2}. $\dim G_{x, y}\leq 1$ and $\delta'<4\delta$.

If $\dim G_{x, y}=0$, then we get (3). 

If $\dim G_{x, y}=1$ and $p_g(G_{x, y})=2$, then we get (2).

Now suppose that $\dim G_{x, y}=1$ and $p_g(G_{x, y})\geq 3$. Then \[(1+\delta')K_X\cdot G_{x, y}=(K_X+\Delta_{x, y})\cdot G_{x, y}\geq 2p_g(G_{x, y})-2\geq 4\] by Proposition~\ref{prop adjandinvadj}.
This implies that \[\frac{1+4\delta}{4}A_{k}\cdot G_{x, y}>1=(\dim G_{x, y})^{\dim G_{x, y}}\] for sufficiently large $k$.
Then by \cite[Lemma~2.18]{Bir23},
we can replace $(\Delta_{x, y}, G_{x, y}, \delta')$ by a new triple $(\Delta'_{x, y}, G'_{x, y}, \delta'+\frac{1+4\delta}{2})$ where $\Delta'_{x, y}$ is 
an effective $\bQ$-divisor satisfying \[\Delta'_{x, y}\sim_\bQ\Delta_{x, y}+\frac{1+4\delta}{2} A_{k}+\frac{1+4\delta}{2k}E_k\sim_\bQ \left(\delta'+\frac{1+4\delta}{2} \right)K_X\] 
and $\dim G'_{x, y}=0$. Then we get (3).

Here we remark that in \cite[Lemma~2.18]{Bir23}, the assumption ``$\textrm{vol}(A|_G) > d^d$'' (where $d=\dim X$) can be replaced by ``$\textrm{vol}(A|_G) > (\dim G)^{\dim G}$'' as its proof is based on \cite[Theorem~6.8.1]{SOP}.
\end{proof}

\begin{proof}[Proof of Theorem~\ref{thm1.1}]
After replacing $X$ by its minimal model, we may assume that $X$ is a minimal $3$-fold with $\Vol(X)=K_X^3>12^6$.
Then we can
apply Proposition~\ref{prop G and Delta} by taking $\delta=\frac{1}{24}$ and get $\mathcal{G}, \Delta_{x,y}, G_{x, y}, \delta'$ correspondingly.

If Proposition~\ref{prop G and Delta}(1) holds, then by Corollary~\ref{cor X/C} for $b>\delta^{-1}=24$,
$d_2(X)\geq 2$.

If Proposition~\ref{prop G and Delta}(2) holds for general $x,y\in X$, 
then by taking general $x_1, y_1, x_2, y_2\in X$, we can get \[(C_1, C_2, \Delta_1, \Delta_2):=(G_{x_1, y_1}, G_{x_2, y_2}, \Delta_{x_1, y_1}, \Delta_{x_2, y_2})\]
satisfying all assumptions of Theorem~\ref{thmcurv} with $\delta_1=\delta_2=\delta'<4\delta=\frac{1}{6}$. Here $K_W\cdot C_i\leq 2$ is by Lemma~\ref{lem degKW}; $\pi(C_2)\not \subset \Supp(\Delta_1)$ as we may choose general $x_2$ such that $x_2\not \in \Supp(\Delta_1)$. Then by Theorem~\ref{thmcurv}, $d_2(X)\geq 2$.

If Proposition~\ref{prop G and Delta}(3) holds for general $x,y\in X$, then
$K_X$ is potentially birational as $\delta'<6\delta+\frac{1}{2}<1$ (see \cite[\S\,2.12]{Bir23} or \cite[Definition~2.3.3]{HMX13}).
So
$\Phi_{|2K_X|}$ is birational by \cite[Definition~2.3.4]{HMX13} and hence $d_2(X)=3 > 2$ .

In summary, $d_2(X)\geq 2$ holds in each case and the proof is completed.
\end{proof}

\section*{Acknowledgments} 
The authors would like to thank Meng Chen for helpful discussions. 
The second author would like to thank Hexu Liu for helpful discussions and suggestions in subsection \ref{subsection5.2}.

This work was supported by National Key Research and Development Program of China (No. 2023YFA1010600, No. 2020YFA0713200) and NSFC for Innovative Research Groups (No. 12121001). C.~Jiang is a member of the Key Laboratory of Mathematics for Nonlinear Sciences, Fudan University.

\end{document}